\newtheorem{theo}{Theorem}[section]
\newtheorem{defi}[theo]{Definition}
\newtheorem{lemm}[theo]{Lemma}
\newtheorem{rem}[theo]{Remark}
\newcounter{c}
\newcounter{d}
\newcounter{b}
\newcommand{\cc}[1][]{\refstepcounter{c}#1\arabic{c} } 
\newcommand{\ce}[1][]{\refstepcounter{d}#1\arabic{d}}
\newcommand{\res}{\mathop{\hbox{\vrule height 7pt width 
0.5pt depth 0pt \vrule height 0.5pt width 6pt depth 0pt}}\nolimits}
\newcommand{\Na}{\mathbb{N}} 
\newcommand{\Z}{\mathbb{Z}} 
\newcommand{\R}{\mathbb{R}} 
\newcommand{\T}{\mathbb {T}} 
\newcommand{\e}{\varepsilon} 
\newcommand{\vhi}{\varphi} 
\newcommand{\spt}{\mathrm{spt}} 
\newcommand{\dist}{\mathrm{dist}} 
\title[Convergence of the Allen-Cahn equation]{Convergence of the Allen-Cahn equation with transport term in a bounded domain}
\author{Yuki Tsukamoto}
\thanks{AMS Subject Classifications: 82C24, 35K57}
\address{
	Organization for the Strategic Coordination of Research and Intellectual Properties, Meiji University, 4-21-1 Nakano,
	Nakano-ku, Tokyo, 164-8525, Japan.}
\begin{document}
\maketitle
\begin{abstract}	
	We study the Allen--Cahn equation with respect to a transport term in a bounded domain.
	We prove that the limit interface is the mean curvature flow with the transport term, given the condition that the energy is uniformly bounded with respect to time. Using this result, we show the existence of the mean curvature flow with a gradient vector field as the transport term.
\end{abstract}

\section{Introduction}
In this study, we consider the following Allen--Cahn equation \cite{SA79} with a transport term:
\begin{align}
	\partial_t \vhi_\e + u_\e\cdot \nabla \vhi_\e = \Delta \vhi_\e - \frac{W'(\vhi_\e)}{\e^2 } \quad
	\mathrm{on} \  \Omega, \label{ae1}
\end{align}
where $\Omega \subset \R^n$ is a bounded domain with smooth boundary,
$\e$ is a small parameter, $u_\e:\Omega \times[0,\infty) \to \R^n$ is a given vector field, and $W$ is the double-well potential, such as $W(s)=(1-s^2)^2/2$. The term $u_\e\cdot \nabla \vhi_\e$ is referred to as the transport term.

In the case of $u_\e=0$, the Allen--Cahn equation has been studied by many researchers
with different settings and assumptions. Ilmanen \cite{Il93} proved that if $\e$ approaches zero, $\Omega$ is equal to $\R^n$, and the appropriate initial value, then $\vhi_\e$ is a two-phase flow, that is, $\vhi_\e(\cdot,t)$ is equal to $\pm 1$ almost everywhere for each $t\geq 0$.
Moreover, when the measure $\mu^\e_t$, the interface of $\vhi_\e$ is defined as
\begin{align}
	\mu^\e_t := \left( \frac{\e |\nabla \vhi_\e(\cdot,t)|^2}{2} + \frac{W(\vhi_\e(\cdot,t))}{\e}\right) \mathcal{L}^n\lfloor_\Omega.
\end{align}
The limit measure $\mu^0_t$ of $\mu^\e_t$ is approximately an $(n-1)$-dimensional surface and satisfies the mean curvature flow $v=h$  in the sense of Brakke \cite{BR78}. Here, $v$ is the velocity vector, and $h$ is the mean curvature vector of $\spt  \mu^0_t$. In the case of a bounded domain $\Omega$, Giga, Onoue, and Takasao \cite{GI21} studied the Dirichlet boundary condition and dynamic boundary condition, and some researchers \cite{KA19, KA95,MI15, MO21} studied the Neumann conditions.
For the general vector field $u_\e$, Liu, Sato, and Tonegawa \cite{LI10}
proved that the limit measure of $\mu^\e_t$
satisfies the mean curvature flow with a transport term $v=h+(\nu\cdot u)\nu$
on a two-dimensional torus $\T^2$ and a three-dimensional torus $\T^3$ under the condition that the  vector field $u_\e$ belongs to the appropriate function space, and the initial energy $\mu^\e_0(\Omega)$ and the initial upper density ratio
\begin{align*} 
	\sup_{B_r(x)\subset \Omega} \frac{\mu^\e_0(B_r(x))}{\omega_{n-1} r^{n-1}}
\end{align*} are uniformly bounded independent of $\e$. Here, $u$ is the limit vector field of $u_\e$, $\nu$ is the unit normal vector of $\spt  \mu^0_t$ and $\omega_{n-1}:=\mathcal{L}^{n-1} (B_1(0) )$.
Under the assumption that the initial energy and the initial upper density ratio
are uniformly bounded, and $u_\e$ belongs to the function space different from \cite{LI10},
 Takasao and Tonegawa \cite{TA16} also proved the existence of limit measure of $\mu^\e_t$ on a domain without boundary, such as a torus $\T^n$ or $\R^n$.
Under the same conditions for the energy and the upper density ratio, Takasao \cite{TA20} proved the existence of a weak solution for the mean curvature flow with transport term and forcing term $v=h+(u \cdot \nu +g)\nu$ by considering the \eqref{ae1} with the addition of the forcing term on $\T^n$.
In \cite{TA16, TA20, LI10}, the initial surface of the mean curvature flow must have finite the upper density ratio and the perimeter.

Herein, we prove that $\mu^\e_t$ converges under different assumptions from previous works, where $\Omega\subset \R^n$ is a bounded domain, and the energy is uniformly bounded independent of $\e$ and $t$. It is difficult to obtain an estimate of the upper density ratio in a neighbourhood of the boundary $\Omega$, so many papers consider it in a no boundary domain such as $\R^n$ or $\T^n$. 
However, by considering a relatively compact domain, we have successfully estimated the upper density ratio on that domain.
Moreover, since we do not assume the boundedness of the initial upper density ratio, we can show that
for an $(n-1)$-dimensional surface $\Gamma \subset \Omega$, not necessarily the $C^1$-class,
there exists a time-evolving surface $\{\Gamma_t\}_{t\geq 0}$ such that it satisfies
the mean curvature flow with a transport term as mentioned by Brakke, and $\Gamma_0$
is equal to $\Gamma$. To prove this theorem, the estimate of the upper-density ratio is
obtained by extending the result of the steady-state Allen--Cahn equation of \cite{TO19}.
Using this estimate, we can show a monotonic formula, which is an important tool for
 geometric measure theory, and then use the result of \cite{TA16}.
\subsection{The associated varifolds}
We associate to each solution of \eqref{ae1} a varifold in the following. We refer to \cite{WA72, LS83} for a comprehensive treatment of varifolds. Let $\mathbf{G}(n,n-1)$ be
the apace of $(n-1)$-dimensional subspaces of $\R^n$. We consider $S \in \mathbf{G}(n,n-1)$
as the $n \times n$ matrix representing the orthogonal projection of $\R^n$ onto $S$.
For $n \times n$ matrices $A$ and $B$, we define
\begin{align*}
A \cdot B := \mathrm{trace} (A^t \circ B) = \sum_{i,j=1}^{n} A_{ij} B_{ij},
\end{align*}
where $\circ$ is the matrix multiplication. A set $V$ is called an $(n-1)$-dimensional
varifold in $\Omega \subset \R^n$ if $V$ is a Radon measure on $\mathbf{G}_{n-1} (\Omega) :=
\Omega \times \mathbf{G}(n,n-1)$. We define $\mathbf{V}_{n-1}(\Omega)$ to be the set of all
$(n-1)$-dimensional varifold in $\Omega$.
Convergence in the varifold sense means convergence
in the usual sense of measures. For $V \in \mathbf{V}_{n-1}(\Omega)$, let $\| V\|$
be the weight measure of $V$.
For $V \in {\bf V}_{n-1}(\Omega)$, we define the first variation of $V$ by
\begin{align}
	\label{4re-eq7}
	\delta V(g) := \int_{{\bf G}_{n-1}(\Omega)} \nabla g(x) \cdot S \  dV(x,S)
\end{align}
for any vector field $g \in C^1_c(\Omega;\mathbb{R}^n)$.
Let $\|\delta V\|$ be the total variation of $\delta V$. 

We associate each function $\vhi_\e(\cdot,t)$ with a varifold $V^\e_t$ as follows.
Let $V^\e_t \in {\bf V}_{n-1}(\Omega)$ be defined by
\begin{align}
	V^\e_t (\phi) := \int_{\{ |\nabla \vhi_\e(\cdot,t)| \neq 0 \}} \phi \Big( x,I-\frac{\nabla \vhi_\e(x,t)}{|\nabla \vhi_\e(x,t)|} \otimes
	\frac{\nabla \vhi_\e(x,t)}{|\nabla \vhi_\e(x,t)|} \Big) d\mu_t^\e(x)
	\label{4defvari}
\end{align}
for $\phi \in C_c ({\bf G}_{n-1}(\Omega))$, where $I$ is the $n \times n$ identity matrix
and $\otimes$ is the tensor product.
By definition of $V^\e_t$, we have
\begin{align*}
\|V^\e_t\|=\mu_t^\e\res_{\{| \nabla \vhi_\e(\cdot,t)|\neq 0\}}
\end{align*}
and by \eqref{4re-eq7}, we obtain
\begin{align}
	\delta V_t^\e (g) = \int_{\{ |\nabla \vhi_\e(\cdot,t)| \neq 0 \}} \nabla g \cdot
	\Big( I-\frac{\nabla \vhi_\e(x,t)}{|\nabla \vhi_\e(x,t)|} \otimes
	\frac{\nabla \vhi_\e(x,t)}{|\nabla \vhi_\e(x,t)|} \Big) d\mu_t^\e
	\label{4defvari1}
\end{align}
for each $g\in C^1_c(\Omega,\mathbb{R}^n)$.
\begin{defi}\label{def1}
	A family of varifolds $\{V_t\}_{t>0} \subset \mathbf{V}_{n-1(\Omega)}$ is a generalized solution of $v=h+(\nu\cdot u)\nu$ if the following conditions are satisfied.
	\begin{itemize}
		\item[(a)] $V_t \in \mathbf{IV}_{n-1}(\Omega)$ for a.e. $t>0$, where $\mathbf{IV}_{n-1}(\Omega)$ is the set of all integral $(n-1)$-varifolds in $\Omega$.
		\item[(b)] For all $0<\tau<T$,
		\begin{align}
			\sup_{t \in [\tau,T]} \|V_t \|(\Omega) +
			\sup_{t \in [\tau,T],B_r(x)\subset \Omega} \frac{\|V_t\|(B_r(x))}{\omega r^{n-1}} <\infty.
		\end{align}
		\item[(c)] For all $0<\tau<T$,
		\begin{align}
			\int^T_\tau \ dt \int_\Omega |h|^2+|u|^2 \ d \|V_t\|<\infty.
		\end{align}
		\item[(d)] For all $\phi \in C^1_c (\Omega \times [0,\infty);\R^+)$ and $0<t_1<t_2<\infty$,
		\begin{align}
			\left.\|V_t\|(\phi(\cdot,t))\right|^{t_2}_{t=t_1} \leq \int^{t_2}_{t_1} \ dt \int_{\Omega}
			(\nabla \phi -h \phi)\cdot \{h+(u\cdot \nu)\nu\} + \partial_t \phi \ d\|V_t\|.
		\end{align}
	\end{itemize}
\end{defi}
Note that this definition is slightly different from the definition of \cite{TA16} because it does not include $t=0$.
\subsection{Main Theorems}
The following theorem is the main result of this study.
Throughout this study, we assume that $\Omega \subset \R^n$ is a bounded domain with
smooth boundary $\partial \Omega$. Suppose that $W \in C^3 (\R)$ satisfies the following:
\begin{itemize}
	\item[(W-a)] The function $W$ has two strict minima $W(\pm 1)=W'(\pm 1)=0$.
	\item[(W-b)] For some $\gamma \in(-1,1)$, $W'>0$ 
	on $(-1,\gamma)$ and $W'<0$ on $(\gamma,1)$.\label{test}
	\item[(W-c)] For some $\alpha \in (0,1)$ and $\kappa>0$, $W''(x)\geq \kappa$ for all $|x| \geq \alpha$.
\end{itemize}
\begin{theo} \label{MT1}
	Suppose $n \geq 2$, 
	\begin{align} \label{MTe1}
	2<q<\infty, \ \frac{nq}{2(q-1)}<p<\infty \ \left( \frac{4}{3} \leq p<\infty
	 \ \mathrm{in \ addition \ if} \ n=2 \right),
	\end{align}
	$0<\beta<1/2$, $0<\e<1$, and $\vhi_\e$ satisfies
	\begin{align}
	\partial_t \vhi_\e + u_\e\cdot \nabla \vhi_\e = \Delta \vhi_\e - \frac{W'(\vhi_\e)}{\e^2} \ \mathrm{on}
	 \ \Omega \times[0,T]. \label{MTe2} 
	\end{align}
	Assume $\Omega' \subset \subset \Omega$, $0<\tau<T $, $u_\e \in C^\infty (\Omega \times [0,T])$, $\nabla^j \vhi_\e$, 
	$\partial_t \nabla^k \vhi_\e \in C(\Omega \times [0,T])$ for $k\in\{0,1\}$ and $j \in \{0,1,2,3\}$, respectively.
	Let $\mu^\e_t$ be a Radon measure on $\Omega$ defined by
	\begin{align}
	\int_{\Omega} \phi(x) \ d\mu^\e_t(x) := 
	\int_{\Omega} \phi(x) \left( \frac{\e|\nabla \vhi_\e(x,t)|^2}{2} +
	\frac{W(\vhi_\e(x,t))}{\e} \right)  \ dx
	\end{align}
	for $\phi \in C_c(\Omega)$. Assume
\begin{align}
	\sup_{\Omega \times [0, T]} |\vhi_\e| \leq 1, \label{a2} \\
	\sup_{\Omega \times [0, T]} |u_\e| \leq \e^{-\beta} , 
	\sup_{\Omega \times [0, T]} |\nabla u_\e| \leq \e^{-(\beta+1)}, \label{a3} \\
	\|u_\e\|_{L^q([0,T];(W^{1,p}(\Omega))^n)} \leq \Lambda_0, \label{a4} \\
	\sup_{[0, T]} \mu_t^\e(\Omega) \leq \Lambda_1. \label{a5}
\end{align}
	Then, there exist constants $\e_{\ce \label{ME1}}= \e_{\ref{ME1}}(n,p,q,\beta,\Lambda_0,\Lambda_1,\tau, \Omega', \Omega,W)>0$, and
	 $D_1= D_1(n,p,q,\beta,\Lambda_0,\Lambda_1,\tau, \Omega', \Omega,W)>0$ such that, if
	$\e<\e_{\ref{ME1}}$, we have
	\begin{align}
	\sup_{U_{r}(x)\subset \Omega',t \in [\tau,T]}  \frac{\mu_{t}^\e (B_r(x))}{r^{n-1}} \leq D_1.
	\end{align}
\end{theo}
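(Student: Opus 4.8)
\textit{Strategy.} The plan is to freeze time and read Theorem~\ref{MT1} as an assertion about the \emph{stationary} Allen--Cahn equation with a forcing term, for which a quantitative density-ratio estimate — an extension of \cite{TO19} — is available. At each fixed $t$ the slice $\vhi(\cdot,t)$ solves
\[
\e^2\Delta\vhi-W'(\vhi)=\e^2 g_\e(\cdot,t),\qquad g_\e:=\partial_t\vhi+u\cdot\nabla\vhi,
\]
with $\sup|\vhi|\le1$ by \eqref{a2} and $\mu^\e_t(\Omega)\le M_0$ by \eqref{a5}. So the first and main step is to establish the following elliptic fact with explicit constants: a solution of the stationary Allen--Cahn equation with right-hand side $g$, with energy $\le E$ on a ball and with $\|g\|$ controlled in the integrability class dictated by \eqref{MTe1}, satisfies $\mu^\e(B_r(x))/r^{n-1}\le C(n,p,q,E,\dots)$ for $r$ below a fixed scale, uniformly as $\e\to0$. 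Here assumption~(c) on $W$ (strict convexity $W''\ge\kappa$ away from the wells) is used, exactly as in \cite{TO19}, to control the discrepancy $\e|\nabla\vhi|^2/2-W(\vhi)/\e$; this is the mechanism that turns the stationary structure into an almost-monotonicity inequality for $r\mapsto\mu^\e(B_r(x))/r^{n-1}$, which, started from the top scale $r=\diam\Omega'$ where the ratio is bounded by $M_0/(\diam\Omega')^{n-1}$, propagates the bound to all smaller $r$.

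\textit{Controlling the forcing.} The second step is to verify that $g_\e$ lies in the required class with $\e$-uniform norm. For the transport part $u\cdot\nabla\vhi$ this is precisely what \eqref{a4}, together with the exponent condition \eqref{MTe1} — which is the parabolic Serrin-type relation $\tfrac1q+\tfrac{n}{2p}<1$ once $\tfrac{nq}{2(q-1)}<p$ is rearranged — is designed to supply, after $\sqrt{\e}\,\nabla\vhi$ is absorbed through the energy bound; the pointwise bounds \eqref{a3} enter only to dominate error terms carrying negative powers of $\e$, which is harmless because $\beta<1/2$. For the time-derivative part I use the energy dissipation identity: testing \eqref{MTe2} with $\e\eta\,\partial_t\vhi$ for a cutoff $\eta=\zeta^2$, $\zeta\in C_c^\infty(\Omega)$, $\zeta\equiv1$ on $\Omega'$, yields
\[
\tfrac12\int_{\Omega}\eta\,\e(\partial_t\vhi)^2\,dx\le-\frac{d}{dt}\!\int_{\Omega}\eta\,d\mu^\e_t+C(\Omega',\Omega)\,M_0+C\,\e^{-2\beta}M_0,
\]
so that, integrating in $t$ and multiplying by $\e$, $\int_0^T\!\int_{\Omega'}\e^2(\partial_t\vhi)^2\lesssim \e^{1-2\beta}M_0(1+T)\to0$. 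Hence $\e\,\partial_t\vhi\to0$ in $L^2(\Omega'\times[0,T])$, so for all times outside a set of small measure the slice norm $\|\e\,\partial_t\vhi(\cdot,t)\|_{L^2(\Omega')}$ is small, which is exactly what feeds the elliptic estimate at those times.

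\textit{From almost every time to every time, and the obstacle.} This produces the density bound for almost every $t\in[\tau,T]$ and for every admissible $(x,r)$ simultaneously, since the exceptional set of times depends on neither $x$ nor $r$; it is then upgraded to all $t\in[\tau,T]$ because $t\mapsto\mu^\e_t(B_r(x))$ is continuous under the stated regularity of $\vhi$ (equivalently, by the short-time bound $\frac{d}{dt}\int\psi\,d\mu^\e_t\le C(r^{-2}+\e^{-2\beta})\,\mu^\e_t(B_{2r}(x))$ obtained from the same computation with $\psi$ a cutoff at scale $r$). The main obstacle — the real content of the argument — is the first step: extending \cite{TO19} so that the stationary density-ratio estimate tolerates a forcing term that is $\e$-dependent and only barely in the admissible integrability range, while keeping the output constant independent of $\e$ and of the time slice; this forces one to track the $\e$-scaling through both the discrepancy estimate and the monotonicity inequality, and it is in the borderline low-dimensional case $n=2$ that this bookkeeping requires the extra restriction $4/3\le p$ in \eqref{MTe1}. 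A secondary, routine point is that, in contrast with the torus or $\R^n$ setting of \cite{TA16}, one must localize everything away from $\partial\Omega$; since the conclusion concerns $\Omega'\subset\subset\Omega$ and small $r$, all boundary contributions sit inside the cutoff region and cause no difficulty.
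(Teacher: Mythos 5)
Your proposal is a genuinely different route from the paper's, and it has a gap that I do not see how to close. The paper does \emph{not} freeze time and invoke a stationary estimate; it runs a parabolic argument built around a Huisken-type, Gaussian-weighted monotonicity formula (Theorem~\ref{l4}), a negativity estimate for the discrepancy (Lemma~\ref{l2}), and a time-integrated transport estimate in the parabolic Serrin class (Lemma~\ref{l8}), all fed into a near-extremizer/rescaling bootstrap (Theorem~\ref{l9}). The phrase ``extending \cite{TO19}'' in the introduction refers to importing this rescaling/extremizer scheme, not to freezing the equation at each time slice.

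The gap in your scheme is in Step~2, ``controlling the forcing.'' For a stationary density-ratio estimate at a \emph{single} time $t$ you need $g_\e(\cdot,t)=\partial_t\vhi(\cdot,t)+u(\cdot,t)\cdot\nabla\vhi(\cdot,t)$ controlled in an integrability class with an \emph{elliptic} Serrin condition, roughly $p>n-1$. The hypothesis \eqref{a4} only gives $\|u(\cdot,t)\|_{W^{1,p}}\in L^q_t$, so at a fixed slice this norm is not bounded at all; and the exponent condition \eqref{MTe1} is the \emph{parabolic} relation $\tfrac1q+\tfrac{n}{2p}<1$, which for large $q$ allows $p$ arbitrarily close to $n/2$ --- well below any elliptic threshold when $n\ge3$. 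Your dissipation bound correctly yields $\|\e\,\partial_t\vhi\|_{L^2_{t,x}(\Omega'\times[0,T])}^2\lesssim\e^{1-2\beta}M_0$, but that produces only $L^2_x$ control of the time-derivative forcing for a.e.\ $t$, which is again below the exponent needed for the elliptic estimate once $n\ge4$. Finally, your upgrade from ``a.e.\ $t$'' to ``every $t$'' uses $\tfrac{d}{dt}\int\psi\,d\mu^\e_t\le C(r^{-2}+\e^{-2\beta})\mu^\e_t(B_{2r})$; the $\e^{-2\beta}$ factor means this propagates only over time gaps $o(\e^{2\beta})$, whereas the set of bad times coming from the $L^q_t$-only control of $u$ has measure bounded below independently of $\e$. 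The paper avoids all of these issues precisely because the Gaussian monotonicity is integrated over a time interval of fixed positive length, where the transport error is summable by Lemma~\ref{l8} and the discrepancy error by Lemma~\ref{l7}; the time integral, not a pointwise-in-time slice, is the object that is small.
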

We can show the following by Theorem \ref{MT1}, \cite{TA16}, and some arguments.
\begin{theo} \label{MT2}
	Suppose $n \geq 2$,
	\begin{align} 
	2<q<\infty, \ \frac{nq}{2(q-1)}<p<\infty \ \left( \frac{4}{3} \leq p<\infty
	\ \mathrm{in \ addition \ if} \ n=2 \right).
	\end{align}
	Given any $g \in L^q([0,\infty);W^{2,p}(\Omega)) \cap W^{1,\infty}([0,\infty);L^\infty(\Omega))$ and a non-empty domain
	$\Omega_0 \subset \subset \Omega$ with $	\chi_{\Omega_0} \in BV(\Omega)$ and $M_0=\partial \Omega_0$, we have the following properties:
	\begin{enumerate}
		\item There exists a family of varifolds $\{V_t\}_{t>0} \subset {\bf V}_{n-1}(\Omega)$
		which is a generalized solution of $v= h +(\nabla g \cdot \nu)\nu$, as in 
		Definition \ref{def1}.
		\item There exists a function $\vhi \in BV_{loc}(\Omega \times [0,\infty)) \cap C^{\frac12}_{loc}
		((0,\infty);L^1(\Omega))$ with the following properties.
\begin{itemize}
	\item[(2-a)] $\vhi(\cdot,t)$ is a characteristic function for all $t \in [0,\infty)$.
	\item[(2-b)]  $\|\nabla \vhi(\cdot,t)\| (\phi) \leq \|V_t\|(\phi) $ for all
	$t\in(0,\infty)$ and $\phi \in C_c(\Omega;\R^+)$.
	\item[(2-c)]  $\vhi(\cdot,0) = \chi_{\Omega_0}$ a.e. on $\Omega$.
	\item[(2-d)]  There exist constants $\iota=\iota(n,p,q)>0$ and $c_{\cc \label{Mc1}} =c_{ \ref{Mc1}}(n,p,q,\beta,g,\Omega_0,  \Omega,W)$ such that for any $0<t<1$, we have
	\begin{align*}
		|\Omega_0 \Delta \Omega_t|\leq c_{ \ref{Mc1}} t^\iota,
	\end{align*}
where $\Omega_t = \{x\in \Omega; \vhi(x,t)=1 \}$.
	\item[(2-e)]  Writing $\|V_t\|= \theta \mathcal{H}^{n-1}\lfloor_{M_t}$ and
	$\| \nabla \vhi(\cdot,t)\| = \mathcal{H}^{n-1}\lfloor_{\tilde{M}_t}$ for a.e. $t>0$,
	we have 
	\begin{align}
	\mathcal{H}^{n-1}(\tilde{M}_t \backslash M_t)=0
	\end{align}
	and
	\begin{align}
	\theta(x,t)= \begin{cases}
	\mathrm{even \ integer} \geq 2 \quad \mathrm{if} \ x \in M_t \backslash \tilde{M}_t ,\\
	\mathrm{odd \  integer} \geq 1 \quad \mathrm{if} \ x \in \tilde{M}_t 
	\end{cases}
	\end{align}
	for $\mathcal{H}^{n-1}$ a.e. $x \in M_t$.
\end{itemize}
\item If $p<n$, then for any $T>\tau>0$, setting $s:=\frac{p(n-1)}{n-p}$, we have
\begin{align}\label{MT2e1}
\left(\int^T_\tau \left(\int_{\Omega} |\nabla g|^s \ d \|V_t\| \right)^{\frac{q}{s}} dt \right)^{\frac{1}{q}}<\infty.
\end{align}
If $p=n$, we have \eqref{MT2e1} for any $2\leq s<\infty$.
	\end{enumerate}
\end{theo}
\begin{rem}
	Unlike \cite{TA16}, we cannot prove whether $V_t$ has unit density because the density of the initial surface is not necessarily bounded. We consider that the initial surface condition is satisfied by (2-d).

	
	Even if $\Omega=\R^n$, Theorem \ref{MT2} holds because $\Omega_0$ is bounded.
	But when $\Omega_0$ is not bounded, the theorem does not hold. Also, this condition is not considered in \cite{TA16}.
	
	It is difficult to check whether the assumption \eqref{a5} is satisfied for the general vector field $u$. However, when $u = \nabla g$, we can show that \eqref{a5} is satisfied by Lemma \ref{l3a}.
	
	The function $\vhi$ appearing in (2) of Theorem \ref{MT2} consists in the following.
	Let $\vhi_{\e_i} \in C^\infty(\Omega\times [0,i))$ the solution of \eqref{l31}.
	Define
	\begin{align*}
		w_i := \Phi \circ \vhi_{\e_i} \ \mathrm{with} \ \Phi(s):= \frac{\int^s_{-1}\sqrt{2W(y)} \ dy}{\int^1_{-1}\sqrt{2W(y)} \ dy}.
	\end{align*}
Then there exists a subsequence $\{w_{i_k}\}_{k\in \Na} \subset\{w_i\}_{i\in \Na}$
which converges $L^1$ and pointwise, and the limit function is $\vhi$.
\end{rem}

	In two dimensions, an example of a finite perimeter but infinite density include the following. 
	\begin{lemm}
		There exists a function $f \in C([0,1])$ such that
		\begin{align*}
			\mathcal{H}^1\lfloor_{A}(\R^2) <\infty,\\
			\limsup_{r \to 0} \frac{\mathcal{H}^1 \lfloor_{A } (B_r)}{r} =\infty,
		\end{align*}
	where $A= \{(x,y) \in [0,1]\times \R; y=f(x) \}$.
	\end{lemm}
\begin{proof}
The function $f \in C([0,1])$ is defined by a line connecting points 
$(k^{-1}, (-1)^k k^{-3/2})$ and $((k+1)^{-1}, (-1)^{k+1} (k+1)^{-3/2})$ on $[(k+1)^{-1},k^{-1}]$, where $f(0)=0$.
The length of the line on $[(k+1)^{-1},k^{-1}]$ is as follows.
\begin{align*}
	&\mathcal{H}^1\lfloor_{A}([(k+1)^{-1},k^{-1}] \times \R)\\
	=& \sqrt{\left( \frac{1}{k(k+1)}\right)^2 +\left(\frac{1}{k^{\frac{3}{2}}}
		+\frac{1}{(k+1)^{\frac{3}{2}}}\right)^2 } \\
	\leq& \frac{3}{k^{\frac{3}{2}}}.
\end{align*}
Hence, we obtain
\begin{align*} 
		\mathcal{H}^1\lfloor_{A}(\R^2) \leq \sum_{k=1}^{\infty} \frac{3}{k^{\frac{3}{2}}}<\infty.
\end{align*}
Since $\mathcal{H}^1\lfloor_{A}([(k+1)^{-1},k^{-1}] \times \R) \geq k^{-3/2}$, we have
\begin{align*}
	\limsup_{r \to 0}\frac{\mathcal{H}^1 \lfloor_{A } (B_r)}{r} &\geq \limsup_{r \to 0} \frac{1}{r}
	\sum_{k=\lfloor r^{-1} \rfloor }^{\infty} \frac{1}{k^{\frac{3}{2}}}\\
	&\geq \limsup_{r \to 0}\frac{1}{r} \int^\infty_{\lfloor r^{-1} \rfloor } x^{-\frac{3}{2}} \ dx\\
	&\geq \limsup_{r \to 0}\frac{2(\lfloor r^{-1} \rfloor )^{-\frac{1}{2}}}{r}\\
	&= \infty,
\end{align*}
where $\lfloor x \rfloor = \max \{m \in \Z ;m\leq x \} $.
Thus this lemma follows.
\end{proof}
If part of the boundary of $\Omega_0$ is such the function,
it cannot be treated in previous studies,
 but it is an example that can be treated by this result.

The remainder of this paper is organized as follows. In Section 2, we first obtain an estimate of the upper density ratio
on the open unit ball. Using this result, we prove Theorem \ref{MT1} and the theorems required in
later section. In Section 3, we construct functions that satisfy the assumptions of Theorem \ref{MT1} and are associated with a given initial surface. We then show that the results of \cite{TA16} are valid, and prove Theorem \ref{MT2}.

	\section{The estimate for the upper density ratio}

Throughout this section, we assume \eqref{MTe1}--\eqref{a5} and  we drop the index $\e$, that is, we write $u$, $\vhi$, and $\mu_t$ instead of $u_\e$, $\vhi_\e$, and $\mu_t^\e$.
Up to Theorem \ref{l9}, the set $\Omega= U_1=\{|x|<1\}$ since the result is local.

\begin{lemm} \label{l1}
	There exists a constant $c_{\cc \label{1c1}} = c_{\ref{1c1}}(n,  W)$ such that
	\begin{align} 
	\sup_{U_{1-\e} \times [\e^2, T]} \e|\nabla \vhi| +
	\sup_{x,y \in U_{1-\e} ,t \in[\e^2,T]} \e^{\frac32}\frac{|\nabla \vhi(x,t) - \nabla \vhi (y,t)|}{|x-y|^\frac12} \leq c_{\ref{1c1}} \label{1e1}
	\end{align}
	for $0<\e <1/8$. If $\e \geq 1/8$, then we have for any $0<d<1$
	\begin{align}
	\sup_{U_{d} \times [\frac14, T]} \e|\nabla \vhi| 
	 \leq c_{\ref{1c1}} \label{1e2}
	\end{align}
	where $c_{\ref{1c1}}$ depends additionally on $d$.
\end{lemm}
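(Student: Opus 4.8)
The plan is to obtain the gradient bounds via a rescaling argument combined with interior parabolic Schauder estimates, treating the transport term $u\cdot\nabla\vhi$ and the potential term $W'(\vhi)/\e^2$ as controlled perturbations. First I would introduce, for a fixed point $(x_0,t_0)$ with $x_0\in U_{1-\e}$ and $t_0\in[\e^2,T]$, the parabolic rescaling $\tilde\vhi(y,s):=\vhi(x_0+\e y, t_0+\e^2 s)$, defined for $y$ in a unit ball and $s\in[-1,0]$; this is legitimate since $x_0+\e y$ stays inside $\Omega=U_1$ and $t_0+\e^2 s\ge 0$. Under this scaling, equation \eqref{MTe2} becomes
\begin{align*}
\partial_s\tilde\vhi + \e\,\tilde u\cdot\nabla_y\tilde\vhi = \Delta_y\tilde\vhi - W'(\tilde\vhi),
\end{align*}
where $\tilde u(y,s):=u(x_0+\e y,t_0+\e^2 s)$. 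By the maximum-principle hypothesis \eqref{a2} we have $\|\tilde\vhi\|_{L^\infty}\le 1$, hence $W'(\tilde\vhi)$ is bounded by a constant depending only on $W$; and by \eqref{a3} we have $\e|\tilde u|\le\e^{1-\beta}\le 1$ since $\beta<1/2<1$. Thus $\tilde\vhi$ solves a uniformly parabolic equation on the unit cylinder with $L^\infty$-bounded right-hand side and $L^\infty$-bounded drift coefficient, all bounds depending only on $n$ and $W$.

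The core step is then to upgrade the $L^\infty$ bound on $\tilde\vhi$ to a $C^{1,1/2}_x$ (parabolic $C^{1+\alpha}$ with $\alpha=1/2$) interior estimate. I would do this in two stages: first apply interior $L^p$ parabolic estimates (or De Giorgi–Nash–Moser) to get $\tilde\vhi\in C^{\delta}$ on a slightly smaller cylinder, which makes the drift term $\e\tilde u\cdot\nabla\tilde\vhi$ — rewritten in divergence form or absorbed — and the zeroth-order term Hölder continuous in a suitable sense; then apply interior parabolic Schauder estimates to conclude $\|\tilde\vhi\|_{C^{1+1/2}}\le C(n,W)$ on the half-size cylinder. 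In particular $|\nabla_y\tilde\vhi|\le C$ and $\tilde\vhi$ has a uniformly $C^{1/2}$-bounded spatial gradient there. Undoing the scaling, $\nabla_x\vhi(x,t)=\e^{-1}\nabla_y\tilde\vhi$, so $\e|\nabla\vhi(x_0,t_0)|\le C(n,W)=:c_{\ref{1c1}}$; and the spatial Hölder seminorm of $\nabla\tilde\vhi$ transfers to $\e^{3/2}|\nabla\vhi(x,t)-\nabla\vhi(y,t)|\,|x-y|^{-1/2}\le c_{\ref{1c1}}$ after accounting for the $\e$-factors from both the $\e^{-1}$ in $\nabla_x=\e^{-1}\nabla_y$ and the $\e^{1/2}$ from $|x-y|^{1/2}=\e^{1/2}|y_1-y_2|^{1/2}$. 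The restriction $t_0\ge\e^2$ ensures the backward cylinder of radius one in the rescaled variables does not reach below $t=0$; the restriction $x_0\in U_{1-\e}$ (and working up to scale $\e$) keeps the rescaled domain inside the unit ball, which is why no boundary condition is needed.

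For the case $\e\ge 1/8$, the parameter $\e$ is no longer small, so instead of rescaling I would work directly: on $U_d\times[1/4,T]$ with $d<1$, equation \eqref{MTe2} is already uniformly parabolic with coefficients controlled by \eqref{a2}, \eqref{a3} (note $\e^{-\beta}\le 8^\beta$ and $\e^{-2}\le 64$ are just constants now), and interior parabolic estimates on the fixed-size region $U_d\times[1/4,T]$ (staying a fixed distance $1-d$ from $\partial U_1$ and a fixed time $1/4$ away from the initial slice) give $|\nabla\vhi|\le C(n,W,d)$, hence $\e|\nabla\vhi|\le C(n,W,d)\cdot\e\le$ (since $\e<1$) a constant depending on $n,W,d$; enlarging $c_{\ref{1c1}}$ if necessary covers this case.

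The main obstacle I anticipate is handling the drift term $\e\tilde u\cdot\nabla\tilde\vhi$ cleanly inside the Schauder framework: $\tilde u$ is only controlled in $L^\infty$ (from \eqref{a3}; the $L^q W^{1,p}$ bound \eqref{a4} does not rescale favorably at scale $\e$ and should not be used here), so the first-order coefficient is merely bounded, not Hölder, and one cannot directly invoke classical Schauder. The fix is to absorb $\e\tilde u\cdot\nabla\tilde\vhi$ into the right-hand side after the De Giorgi step gives $\nabla\tilde\vhi\in L^\infty_{loc}$ — or, more robustly, to first obtain $\nabla\tilde\vhi\in C^\delta_{loc}$ via $L^p$ theory (right-hand side in $L^\infty\subset L^p$ for all $p$, so $\tilde\vhi\in W^{2,p}_{loc}$, hence $\nabla\tilde\vhi\in C^\delta$ by Sobolev embedding for $p$ large), at which point $\e\tilde u\cdot\nabla\tilde\vhi\in C^\delta$ (using that $\tilde u$, being a rescaling of a smooth field, is at least Lipschitz, though in fact boundedness suffices once $\nabla\tilde\vhi$ is continuous) and full Schauder applies to give the $C^{1+1/2}$ bound. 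Care is also needed to verify that all constants genuinely depend only on $n$ and $W$ and are uniform in $T$, which follows because the estimates are interior in the backward cylinder of unit size and the bounds on $W'(\tilde\vhi)$ and on the coefficients are $T$-independent.
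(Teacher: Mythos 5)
Your rescaling $(x,t)\mapsto(x_0+\e y,\, t_0+\e^2 s)$ is exactly the paper's, and your overall route through interior parabolic regularity is sound, but the details of how regularity is established differ. The paper avoids Schauder and De Giorgi entirely: it tests the rescaled equation against $\phi^2\tilde\vhi$ to get a local $L^2$ bound on $\nabla\tilde\vhi$, moves the drift and potential terms to the right-hand side and applies the LSU interior $L^r$ estimate with $r=2$ to get $\tilde\vhi_t,\nabla^2\tilde\vhi\in L^2$, differentiates the equation in $x_j$ and applies the same estimate to get $\nabla\tilde\vhi_t,\nabla^3\tilde\vhi\in L^2$, uses parabolic Sobolev embedding to raise the integrability of $\nabla\tilde\vhi$ to $L^{2(n+1)/(n-1)}$, iterates this finitely many times to reach $L^{2n}$, and finally invokes Morrey's inequality to obtain the $C^{1/2}$ spatial bound on $\nabla\tilde\vhi$. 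This is a self-contained $L^2$-bootstrap; your proposed De Giorgi--Nash--Moser (or Krylov--Safonov) step followed by Schauder is a valid but heavier alternative that buys a full $C^{2+\delta}$ estimate where only $C^{1,1/2}$ is asked for.

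Two points in your reasoning should be corrected. First, you claim $\tilde u$ is only controlled in $L^\infty$ from \eqref{a3} and hence that the drift is merely bounded, not H\"older; but \eqref{a3} also bounds $|\nabla u|\le\e^{-(\beta+1)}$, which after rescaling gives $|\nabla_y(\e\tilde u)|\le\e\cdot\e\cdot\e^{-(\beta+1)}=\e^{1-\beta}$, so the drift coefficient $\e\tilde u$ is uniformly Lipschitz with a small constant, and the obstacle you anticipate for Schauder does not arise. Second, your assertion that the right-hand side is ``in $L^\infty\subset L^p$'' should be read with care: if you move $\e\tilde u\cdot\nabla\tilde\vhi$ to the right-hand side it is not a priori bounded (you have no $L^\infty$ control on $\nabla\tilde\vhi$ at that stage), and if you keep it as a coefficient then the $W^{2,p}$ estimate you invoke requires VMO-type regularity of the coefficient, not merely boundedness. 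In either case you must use the Lipschitz control on $\e\tilde u$ noted above, or supply the initial $L^2$ gradient estimate (as the paper does via testing against $\phi^2\tilde\vhi$) to start the bootstrap; as written your argument skips the step that gets the iteration off the ground.
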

\begin{proof}
	We first consider the case of $0<\e <1/8$. Take any domain $B_{2\e}(x_0) \times [t_0-2\e^2,t_0]
	\subset U_{1} \times [0,T]$. Define $\tilde{\vhi}(x,t):= \vhi(\e x+x_0,\e^2 t+t_0-2\e^2)$
	and $\tilde{u}(x,t):=u(\e x+x_0,\e^2 t+t_0-2\e^2)$ for $(x,t )\in B_2 \times [0,2]$.
	By \eqref{a3}, we obtain
	\begin{align} \label{l1e2}
	\sup_{B_2 \times [0,2]} (|\tilde{u}| + |\nabla \tilde{u}|) \leq \e^{-\beta}.
	\end{align}
	By \eqref{MTe2}, we have
	\begin{align} \label{l1e1}
	\partial_t \tilde{\vhi} +\e \tilde{u}\cdot \nabla \tilde{\vhi}
	= \Delta \tilde{\vhi} -W'(\tilde{\vhi}).
	\end{align}
	Using the interior estimate of \cite[p. 342, Theorem 9.1]{LA68}, if $0<s_1<s_2<\frac32$, $0\leq \tau_2<\tau_1<2$, and $\partial_t v-\Delta v = f$ on $B_\frac32 \times [0,2]$,
	then we have
	\begin{align} \label{l1e4}
	&\| \partial_t v, \nabla^2 v\|_{L^r(B_{s_1}\times [\tau_1,2])} \nonumber\\ \leq& c(n,r,s_1,s_2,\tau_1,\tau_2) (\|f\|_{L^r(B_{s_2}\times [\tau_2,2])}
	+\|v\|_{L^r(B_{s_2}\times [\tau_2,2])}  )
	\end{align}
	for $r \in (1,\infty)$. Let $\phi \in C^1_c (B_\frac32)$ be a cut-off function and multiply $\phi^2 \tilde{\vhi}$ to \eqref{l1e1}, then by integration by parts, \eqref{a2}, and \eqref{l1e2}, we obtain
	\begin{align} \label{l1e3}
	\int^2_0 \int_{B_\frac32} |\nabla \tilde{\vhi}|^2 \ dxdt \leq c(W).
	\end{align}
	By \eqref{a2}, \eqref{l1e2}, \eqref{l1e4}, and \eqref{l1e3}, we have
	\begin{align} \label{l1e6}
	&\int^2_{\frac23} \int_{B_{\frac43}} |\tilde{\vhi}_t|^2+|\nabla^2 \tilde{\vhi}|^2 \ dxdt \\
	\leq& c(n)(\|\e \tilde{u}\cdot \nabla \tilde{\vhi} +W'(\tilde{\vhi})\|_{L^2(B_\frac32\times [\frac12,2])}+
	\|\tilde{\vhi} \|_{L^2(B_\frac32\times [\frac12,2])}  )\nonumber \\
	\leq& c(n,W).
	\end{align}
	Differentiate \eqref{l1e1} with respect to $x_j$, we have
	\begin{align}\label{l1e5}
	\partial_t(\tilde{\vhi}_{x_j} ) - \Delta \tilde{\vhi}_{x_j}
	= -\e \tilde{u}_{x_j} \cdot \nabla \tilde{\vhi} -\e \tilde{u}\cdot \nabla \tilde{\vhi}_{x_j}
	-W''(\tilde{\vhi})\tilde{\vhi}_{x_j}.
	\end{align}
	Using \eqref{a2}, \eqref{l1e2}, \eqref{l1e4}, \eqref{l1e3}, and \eqref{l1e6}, we obtain
	\begin{align}\label{l1e7}
	\int^2_\frac34 \int_{B_{\frac54}} |\nabla \tilde{\vhi}_t|^2+|\nabla^3 \tilde{\vhi}|^2 \ dxdt \leq c(n,r,W).
	\end{align}
	By \eqref{l1e7} and the Sobolev inequality, we have
	\begin{align}
	\| \nabla \tilde{\vhi} \|_{L^{\frac{2(n+1)}{n-1}}(B_{\frac54} \times [\frac34,2])}
	\leq c(n,W).
	\end{align}
	We can use this estimate to \eqref{l1e1} and \eqref{l1e4} with $r=\frac{2(n+1)}{n-1}$.
	By repeating this argument a finite number of times, we obtain
	\begin{align}
	\| \nabla \tilde{\vhi} \|_{L^{2n}(B_{\frac{k_1+1}{k_1}} \times [\frac{k_2}{k_2+1},2])}
	\leq c(n,W),
	\end{align}
	where $k_1,k_2>0$ are constants. Using Morrey's inequality, we have
	\begin{align}
	\| \nabla \tilde{\vhi} \|_{C^{\frac12}(B_1 \times [1,2])} \leq
	\| \nabla \tilde{\vhi} \|_{L^{2n}(B_1 \times [1,2])}\leq c(n,W),
	\end{align}
	and \eqref{1e1} follows. The case of $\e \geq 1/8$ does not require the change of variables as above and the proof is omitted.
\end{proof}
In the following, define $\tilde{\beta} = \frac{1}{2}+\frac{\beta}{4}$. We have $\beta<\tilde{\beta}<1$
by definition of $\beta$.
\begin{lemm} \label{l2}
	Given $0<s<1$, there exist constants $c_{\cc \label{2cc1}}=c_{\ref{2cc1}}(\beta)$ an $\e_{\ce \label{2E1}} =\e_{\ref{2E1}}(n,W,\beta,s)$ such that,
	if $\e< \e_{\ref{2E1}}$,
	\begin{align} \label{l2e1}
	\frac{\e |\nabla \vhi|^2}{2} -\frac{W(\vhi)}{\e} \leq c_{\ref{2cc1}} \e^{-\beta} \ \mathrm{on} \ 
	U_{s}\times [\e^{2\tilde{\beta}}, T]. 
	\end{align}
\end{lemm}
\begin{proof}
	Define $\tilde{\vhi}(x,t):= \vhi(\e x,\e^2 t)$
	and $\tilde{u}(x,t):=u(\e x,\e^2 t)$ for $(x,t )\in U_{\e^{-1}} \times [0,\e^{-2}T]$, and subsequently drop $\tilde{\cdot}$ for simplicity. Define $W :=W(\vhi)$, $G:=G(\vhi)$, and
	\begin{align}
	\xi:=\frac{|\nabla \vhi|^2}{2}-W(\vhi) -G(\vhi),
	\end{align}
	where $G$ will be chosen later. By the proof of \cite[Lemma 4.2]{TA16}, we obtain
	on $\{ |\nabla \vhi|>0 \}$
	\begin{align}
	\partial_t \xi + \e u \cdot \nabla \xi -\Delta \xi \leq&
	-(G')^2-W'G'+2G''(\xi +W+G) \nonumber\\
	&-\frac{2(W'+G')}{|\nabla \vhi|^2} \nabla \xi \cdot \nabla \vhi
	-\e \nabla u \cdot(\nabla \vhi \otimes \nabla \vhi). \label{l2e2}
	\end{align}
	Define $s_1:= (1-s)/3$ and $M:=\sup_{U_{\e^{-1}(s+2s_1)} \times [\e^{-2(1-\tilde{\beta})}/4,\e^{-2}T]} \left(\frac{|\nabla \vhi|^2}{2} - W(\vhi) \right)$.
	Note that $M$ is bounded depending only on $n$ and $W$ by Lemma \ref{l1}.
	Since $M \leq 0$ implies \eqref{l2e1}, we may assume $M>0$.
	Let $\phi(x,t) \in C^\infty (U_{\e^{-1}(s+2s_1)} \times [\e^{-2(1-\tilde{\beta})}/4,\e^{-2}T])$ be such that
	$\phi =0$ on $\ U_{\e^{-1}s} \times [3\e^{-2(1-\tilde{\beta})}/4, \e^{-2}T]$,
	$\phi = M$ on 
	$ U_{\e^{-1}(s+2s_1)} \times [\e^{-2(1-\tilde{\beta})}/4,\e^{-2}T] \backslash
	U_{\e^{-1}(s+s_1)} \times [2 \e^{-2(1-\tilde{\beta})}/4,\e^{-2}T]$, $0\leq \phi \leq M$, and
	\begin{align} \label{l2e3}
	  |\nabla \phi| \leq 2s^{-1}_1\e M, \ |\Delta \phi |\leq 4n s_1^{-2}\e^2 M, \ 
	\partial_t \phi \geq -8\e^{2(1-\tilde{\beta})}M.
	\end{align}
	Let
	\begin{align*}
	\tilde{\xi}&:=\xi-\phi,\\
	G(\vhi)&:= 8\e^{b} \left(1-\frac18 (\vhi- \gamma)^2\right),
	\end{align*}
	where $\gamma$ is as in (W-b) and $b$ is a positive constant less than or equal to $1-\beta$. To derive a contradiction, suppose that
	\begin{align*}
	\sup_{\ U_{\e^{-1}s} \times [3\e^{-2(1-\tilde{\beta}) }/4, \e^{-2}T]} \xi \geq \e^{b}.
	\end{align*}
	By definition of $\tilde{\xi}$, we have $\tilde{\xi} \leq 0$ on $U_{\e^{-1}(s+2s_1)} \times [\e^{-2(1-\tilde{\beta})}/4,\e^{-2}T] \backslash
	U_{\e^{-1}(s+s_1)} \times [2 \e^{-2(1-\tilde{\beta})}/4,\e^{-2}T]$ and 
	\begin{align*}
\sup_{\ U_{\e^{-1}s} \times [3\e^{-2(1-\tilde{\beta}) }/4, \e^{-2}T]} \tilde{\xi} \geq \e^{b}.
\end{align*}
Hence, there exists interior maximum point $(x_0,t_0)$ of $\tilde{\xi}$ where
\begin{align}\label{l2e4}
\partial_t \tilde{\xi} \geq 0, \ \nabla \tilde{\xi} =0, \ \Delta \tilde{\xi} \leq 0, \ \mathrm{and}
\ \tilde{\xi} \geq \e^{b}
\end{align}
hold. By \eqref{l2e3} and \eqref{l2e4}, we obtain at the point $(x_0,t_0)$
\begin{align} \label{l2e5}
\partial_t \xi \geq -8\e^{2(1-\tilde{\beta})}M, \ |\nabla \xi| \leq 2s^{-1}_1\e M, \ 
\Delta \xi \leq 4n s_1^{-2}\e^2 M,\ \mathrm{and}
\ |\nabla \vhi|^2 \geq 2\e^{b}.
\end{align}
Substitute \eqref{l2e5} into \eqref{l2e2}. By $\e \nabla u \cdot (\nabla \vhi \otimes
\nabla \vhi)\leq 2\e |\nabla u|(\xi +W+G)$ and \eqref{l1e2}, we have
\begin{align}
0\leq& -(G')^2 -W'G'+2G''(\xi+W+G)
+\frac{4s_1^{-1}(|W'|+|G'|)\e M}{(2\e^{b})^{\frac12}} \nonumber\\
&+2\e^{1-\beta}(\xi+W+G)+8\e^{2(1-\tilde{\beta})}M +2s_1^{-1}\e^{2-\beta}M+4n s_1^{-2}\e^2 M . \label{l2e6}
\end{align}
Since $G''=-2\e^{b}$, $\xi+G\geq 0$, 
and $b\leq 1-\beta$, we have
\begin{align}
2G''(\xi+W+G)+2\e^{1-\beta}(\xi+W+G) \leq -2W\e^b. \label{l2e7}
\end{align}
By \eqref{l2e6} and \eqref{l2e7}, there exists a constant $c_{\cc \label{2c1}}= c_{\ref{2c1}}
(n,\beta,W,s)$ such that for sufficiently small $\e$ depending only on $n$, $\beta$, $W$,
$s$, and $\tau$, we have
\begin{align}
	\begin{split}
	0 \leq& -(G')^2 -W'G'-2W\e^b \\ &+c_{\ref{2c1}}M(|W'|\e^{1-\frac{b}{2}}
	+|G'|\e^{1-\frac{b}{2}}+ \e^{1-\frac{\beta}{2}}).
	\end{split} \label{l2e8}
\end{align}
If $|\vhi (x_0,t_0)|\leq \alpha$, then we have $W(\vhi(x_0,t_0))\geq \min_{|z|\leq \alpha} W(z)>0$. Since $W'G'\geq 0$ and $b\leq 1-\beta <1-\beta/2$,
 there exists a constant $c_{\cc \label{2cca1}}= c_{\ref{2cca1}}
 (n,\beta,W,s)$ such that
\begin{align} \label{2eea}
	0 \leq  -\e^b + c_{\ref{2cca1}} Me^{1-\frac{b}{2}}
\end{align}
for sufficiently small $\e$.
If $|\vhi (x_0,t_0)|>\alpha$, we have
\begin{align*}
-(G')^2 \leq -4\e^{2b}(\alpha-|\gamma|)^2,\\  -W'G'\leq
-2\e^{b}(\alpha-|\gamma|)|W'|,\\
|G'|\e^{1-\frac{b}{2}} \leq  2\e^{1+\frac{b}{2}}(\alpha-|\gamma|).
\end{align*}
By definition of $W$, we have $\alpha>|\gamma|$, and 
 there exists a constant $c_{\cc \label{2cca2}}= c_{\ref{2cca2}}
(n,\beta,W,s)$ such that
\begin{align} \label{2eeb}
	0 \leq  -|W'|(\e^b-c_{\ref{2cca2}}M \e^{1-\frac{b}{2}}) - (\e^{2b}- c_{\ref{2cca2}}M\e^{1-\frac{\beta}{2}})
\end{align}
for sufficiently small $\e$.
If $b= (4-3\beta)/8 $, we have $b<1-b/2$ and $2b<1-\beta/2$. \eqref{2eea} and \eqref{2eeb} are contradiction for sufficiently small $\e$. Hence, we obtain
\begin{align} \label{l2e9}
\sup_{\ U_{\e^{-1}s} \times [3\e^{-2({1-\tilde{\beta})} }/4, \e^{-2}T]} 
\left(\frac{|\nabla \vhi|^2}{2}-W(\vhi)\right) \leq 9\e^{\frac{4-3\beta}{8}},
\end{align}
where $G\leq 8\e^{b}$ is used. 

Now repeat the same argument, this time with $M$ replaced by $9\e^{b_k}$ and $G$ replaced
by $8\e^{b_{k+1}}(1-\frac18 (\vhi-\gamma)^2)$. To derive a contradiction, suppose that
\begin{align}\label{l2e10}
\sup_{\ U_{\e^{-1}s} \times [(1-(1/4)^k)\e^{-2(1-\tilde{\beta}) }, \e^{-2}T]} \xi \geq \e^{b_{k+1}}.
\end{align}
if $b_{k+1} \leq 1-\beta$, \eqref{2eea} is
 \begin{align*}
 0\leq -\e^{b_{k+1}}+ c_{\ref{2cca1}} e^{b_k+ 1-\frac{b_{k+1}}{2}}
 \end{align*}
and \eqref{2eeb} is
\begin{align*} 
	0 \leq  -|W'|(\e^{b_{k+1}}-c_{\ref{2cca2}}\e^{b_k+1-\frac{b_{k+1}}{2}}) - (\e^{2b_{k+1}}- c_{\ref{2cca2}}\e^{b_k+1-\frac{\beta}{2}}).
\end{align*}
If $b_{k+1} =\min \{b_k/2 +(4-3\beta)/8, 1-\beta \} $ and $b_k>0$, these equations are contradiction for sufficiently small $\e$. Hence, we obtain
\begin{align*}
	\sup_{\ U_{\e^{-1}s} \times [(1-(1/4)^k)\e^{-2(1-\tilde{\beta}) }, \e^{-2}T]} 
	\left(\frac{|\nabla \vhi|^2}{2}-W(\vhi)\right) \leq c_{\ref{2ccd1}}\e^{b_{k+1}},
\end{align*}
where $c_{\cc \label{2ccd1}}$ depends on $k$. Since the limit of $b_{k+1}=b_k/2 +(4-3\beta)/8$ is greater than $1-\beta$, there exists a constant $k'=k'(\beta)$ such that
$b_{k'+1} \geq 1-\beta >b_{k'}$, and this lemma follows.
\end{proof}
Define
\begin{align}
	&\eta(x) \in C^\infty_c(B_{\frac14}) \ \mathrm{with} \ \eta =1 \ \mathrm{on} \ 
	B_\frac18, \ 0\leq \eta \leq 1,\label{eta}\\
	&\tilde{\rho}_{(y,s)}(x,t) :=\rho_{(y,s)}(x,t) \eta(x-y)=
	\frac{1}{(4 \pi (s-t))^{\frac{n-1}{2}}} e^{-\frac{|x-y|^2}{4(s-t)}} \eta(x-y).
\end{align}
\begin{lemm} \label{l3} 
	Let $\iota$, $R$, $r$ be positive with $0 \leq \iota- \left(\frac{R}{r}\right)^2 \leq T$ and
	$R \in (0, \frac14)$. Set $\tilde{\iota} = \iota- \left(\frac{R}{r}\right)^2$. For any $y \in U_{\frac12}$,
	we have
	\begin{align}
	\int_{B_{\frac{1}{4}}(y)} \tilde{\rho}_{(y,\iota)}(x,\tilde{\iota})  \ d\mu_{\tilde{\iota}}\leq
	\left(\frac{r}{\sqrt{4\pi} R}\right)^{n-1} \left(\mu_{\tilde{\iota}}(B_{R}(y)) +
	\Lambda_1 e^{-\frac{r^2}{4}} \right).
	\end{align}
\end{lemm}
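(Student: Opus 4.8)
The plan is a direct elementary estimate that exploits the explicit form of the localized backward heat kernel at the specific time $\tilde{\iota}$. Since $\iota-\tilde{\iota}=(R/r)^2$, plugging into the definition of $\tilde{\rho}$ gives
\begin{align*}
	\tilde{\rho}_{(y,\iota)}(x,\tilde{\iota})
	=\frac{1}{\bigl(4\pi (R/r)^2\bigr)^{\frac{n-1}{2}}}\,
	e^{-\frac{r^2|x-y|^2}{4R^2}}\,\eta(x-y)
	=\Bigl(\frac{r}{\sqrt{4\pi}\,R}\Bigr)^{n-1}
	e^{-\frac{r^2|x-y|^2}{4R^2}}\,\eta(x-y),
\end{align*}
so the constant $\bigl(r/(\sqrt{4\pi}\,R)\bigr)^{n-1}$ factors out of the integral and it remains to show
\begin{align*}
	\int_{B_{\frac14}(y)} e^{-\frac{r^2|x-y|^2}{4R^2}}\,\eta(x-y)\,d\mu_{\tilde{\iota}}(x)
	\leq \mu_{\tilde{\iota}}\bigl(B_R(y)\bigr)+M_0\,e^{-\frac{r^2}{4}}.
\end{align*}

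Next I would split the region of integration into $B_R(y)$ and $B_{\frac14}(y)\backslash B_R(y)$. On $B_R(y)$ we have $e^{-r^2|x-y|^2/(4R^2)}\leq 1$ and $\eta\leq 1$, so that part contributes at most $\mu_{\tilde{\iota}}(B_R(y))$. On $B_{\frac14}(y)\backslash B_R(y)$ we have $|x-y|\geq R$, hence $e^{-r^2|x-y|^2/(4R^2)}\leq e^{-r^2/4}$; combined with $\eta\leq 1$ and $\mu_{\tilde{\iota}}(B_{\frac14}(y)\backslash B_R(y))\leq \mu_{\tilde{\iota}}(\Omega)\leq M_0$ (using \eqref{a5}, valid since $\tilde{\iota}\in[0,T]$ by the hypothesis $0\leq\iota-(R/r)^2\leq T$), that part contributes at most $M_0 e^{-r^2/4}$. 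Adding the two contributions gives the displayed inequality, and multiplying back by the extracted constant yields the lemma.

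I do not anticipate any genuine obstacle: this is the standard soft mass bound used to initialize a monotonicity formula of Huisken type, and the only things to record are bookkeeping points — that $\overline{B_{\frac14}(y)}\subset U_1=\Omega$ because $y\in U_{\frac12}$, that $\supp\eta\subset B_{\frac14}$ so the integrand already vanishes outside $B_{\frac14}(y)$ (making $\mu_{\tilde{\iota}}$ the only relevant measure and the stated integral over $B_{\frac14}(y)$ equal to the integral over all of $\Omega$), and that $\iota-\tilde{\iota}=(R/r)^2>0$ so the kernel is well defined at $(x,\tilde{\iota})$.
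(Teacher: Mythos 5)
Your proof is correct and is essentially the same as the paper's: both factor out the explicit prefactor $(r/(\sqrt{4\pi}R))^{n-1}$ from $\tilde{\rho}_{(y,\iota)}(\cdot,\tilde{\iota})$, split the integral over $B_R(y)$ and its complement in $B_{1/4}(y)$, bound the Gaussian by $1$ on the inner ball and by $e^{-r^2/4}$ on the annulus, and invoke \eqref{a5} for the outer mass. No substantive differences.
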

\begin{proof}
By \eqref{a5}, we compute
	\begin{align*}
	&\int_{B_{\frac{1}{4}}(y)} \tilde{\rho}_{(y,\iota)}(x,\tilde{\iota})  \ d\mu_{\tilde{\iota}} \\
	=& \int_{B_{R}(y)} \tilde{\rho}_{(y,\iota)}(x,\tilde{\iota})  \ d\mu_{\tilde{\iota}}
	+\int_{B_{\frac{1}{4}}(y) \backslash B_{R}(y)} \tilde{\rho}_{(y,\iota)}(x,\tilde{\iota})  \ d\mu_{\tilde{\iota}} \\
	\leq &\left(\frac{r}{\sqrt{4\pi} R}\right)^{n-1} \mu_{\tilde{\iota}}(B_{R}(y))+
	\left(\frac{r}{\sqrt{4\pi} R}\right)^{n-1} 
	\int_{B_{\frac{1}{4}}(y) \backslash B_{R}(y)} e^{-\frac{r^2|x-y|^2}{4R^2}} 
	 \ d\mu_{\tilde{\iota}}\\
	\leq &\left(\frac{r}{\sqrt{4\pi} R}\right)^{n-1} \left(\mu_{\tilde{\iota}}(B_{R}(y)) +
	\Lambda_1 e^{-\frac{r^2}{4}} \right).
	\end{align*}
\end{proof}
We can prove the following by exactly the same proof as \cite[Proposition 4.1]{TA16}.
\begin{theo} \label{l4} 
	There exists a constant $c_{\cc \label{4c1}} =c_{ \ref{4c1}}(n)$ such that
	\begin{align}
	&\frac{d}{dt}\int_{B_{\frac{1}{4}}(y)} \tilde{\rho}_{(y,s)}(x,t) \ d\mu_t(x) \nonumber \\  \leq&
	\frac12 \int_{B_{\frac{1}{4}}(y)} \tilde{\rho}_{(y,s)}(x,t) |u|^2  \ d\mu_t(x)
	\nonumber \\ &+  \frac{1}{2(s-t)} \int_{B_{\frac{1}{4}}(y)} \left(\frac{\e |\nabla \vhi|^2}{2} -\frac{W(\vhi)}{\e}\right)\tilde{\rho}_{(y,s)}(x,t) \ dx \nonumber \\
	&+ c_{\ref{4c1}}  e^{-\frac{1}{128(s-t)}} \mu_t(B_{\frac{1}{4}}(y)) \label{4e1}
	\end{align}
	for $y\in U_{\frac12}$, $0<t<s < \infty$ and $t<T$. 
\end{theo}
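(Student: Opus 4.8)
The plan is to differentiate the weighted integral directly and carefully track every term, following the strategy of Huisken's monotonicity formula adapted to the diffuse-interface setting as in \cite{TA16}. First I would compute $\frac{d}{dt}\int_{B_{1/4}(y)} \tilde\rho_{(y,s)}(x,t)\,d\mu_t(x)$ by splitting it into two contributions: the time derivative of the kernel $\tilde\rho_{(y,s)}$ at fixed measure, and the time derivative of the measure $\mu_t$ against the fixed weight. For the latter I would use the evolution of $\mu^\e_t$ coming from \eqref{MTe2}: testing the equation against $\tilde\rho\,\partial_t\vhi$ and integrating by parts produces the first variation of $\mu^\e_t$ plus the transport contribution $u\cdot\nabla\vhi$. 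Writing $\p_\e := \e\nabla\vhi\otimes\nabla\vhi/(\e|\nabla\vhi|^2)$ where $|\nabla\vhi|\neq 0$ (the approximate tangent projection), the standard computation gives
\begin{align*}
\frac{d}{dt}\int \tilde\rho\,d\mu_t
= \int \tilde\rho\Big(\partial_t + \frac{(x-y)}{2(s-t)}\cdot\nabla\Big)\tilde\rho\cdot(\cdots)
- \int \tilde\rho\,\e\Big(\Delta\vhi - \frac{W'(\vhi)}{\e^2}\Big)^2 dx + (\text{transport, cutoff terms}).
\end{align*}

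The key algebraic identity is that the "good" negative term $-\e\int\tilde\rho(\Delta\vhi - W'(\vhi)/\e^2 - u\cdot\nabla\vhi/\ldots)^2$ combines with the kernel derivative $\big(\partial_t\rho + \Delta\rho\big)$-type expression. Recall $\rho_{(y,s)}$ is the backward heat kernel of dimension $n-1$, so it satisfies $\partial_t\rho + \Delta\rho = \frac{|x-y|^2}{4(s-t)^2}\rho - \frac{n-1}{2(s-t)}\rho$, and one completes the square against $\frac{(x-y)}{2(s-t)}$ to absorb the gradient terms; the discrepancy measure $\xi = \e|\nabla\vhi|^2/2 - W(\vhi)/\e$ appears precisely as the leftover, yielding the second term on the right-hand side of \eqref{4e1}. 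The transport term $u\cdot\nabla\vhi$ is handled by Young's inequality: $\e\,\tilde\rho\,(u\cdot\nabla\vhi)(\text{curvature-like factor}) \leq \frac12\tilde\rho|u|^2\,d\mu_t + (\text{absorbed into the good term})$, which is the source of the $\frac12\int\tilde\rho|u|^2\,d\mu_t$ term. Finally, the cutoff $\eta(x-y)$ contributes error terms supported on $B_{1/4}(y)\setminus B_{1/8}(y)$, where $|x-y|\geq 1/8$; there $\rho_{(y,s)}$ and its derivatives are bounded by $c(n)e^{-1/(128(s-t))}$ times polynomial factors in $(s-t)^{-1}$, which are reabsorbed, producing the last term $c_{\ref{4c1}}e^{-1/(128(s-t))}\mu_t(B_{1/4}(y))$.

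Since the excerpt explicitly says this is proved "by exactly the same proof as \cite[Proposition 4.1]{TA16}", I would keep the argument short: set up the change of variables, quote the diffuse first-variation identity, perform the completion of the square, and dispatch the transport and cutoff terms by the two estimates above. The only point requiring care — and the main obstacle — is verifying that the cutoff-localization is harmless: because we are now on a bounded domain $U_1$ rather than $\T^n$ or $\R^n$, one must confirm that for $y\in U_{1/2}$ the ball $B_{1/4}(y)$ stays inside $U_1$ (it does, since $1/2+1/4<1$) and that no boundary terms from $\partial\Omega$ enter, which is exactly why the statement is phrased locally with $\Omega=U_1$. Once that is observed, the Tonegawa--Takasao computation goes through verbatim.
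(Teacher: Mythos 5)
Your proposal follows the same route the paper takes, since the paper's entire proof is the remark ``exactly the same proof as \cite[Proposition~4.1]{TA16}'' and you are simply reconstructing that Huisken-type monotonicity computation for the diffuse interface (first variation of $\mu_t^\e$ against $\tilde\rho$, completion of the square producing the discrepancy term, Young on the transport term giving $\frac12\int\tilde\rho|u|^2\,d\mu_t$, and absorbing the cutoff errors into the exponentially small term). One small slip worth noting: the $(n-1)$-dimensional backward heat kernel on $\R^n$ satisfies $\partial_t\rho+\Delta\rho=-\tfrac{1}{2(s-t)}\rho$, not the expression you wrote (which is essentially $-\partial_t\rho$ with a sign error); likewise the precise constant in the exponential depends on where $\nabla\eta$ is supported, but neither slip affects the structure of the argument.
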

\begin{lemm} \label{l5}
	There exist constants  $c_{\cc \label{5c1}}=c_{\ref{5c1}}(n,W)$ and $\e_{\ce \label{5E1}}
	=\e_{\ref{5E1}}(n,W,\beta , \Lambda_1)$ with the following property. Assume $\e \leq \e_{\ref{5E1}}$ and there exist $y \in U_{\frac12}$ and $s \in (\e^2+2\e^{2\tilde{\beta}}, T]$ such that
	$|\vhi(y,s)| \leq \alpha$ holds. Then for any $y \in U_{\frac12}$ and $t \in (\e^2, T]$
	with $s-2\e^{2\tilde{\beta}} \leq t \leq s$ we have
	\begin{align}
	c_{\ref{5c1}} \leq \frac{|4n\log{\e}|^{\frac{n-1}{2}}}{R^{n-1}} \mu_t(B_R(y)),
	\end{align}
	where  $R = |4n\log{\e}|^{\frac12}\sqrt{s+\e^2-t} $.
\end{lemm}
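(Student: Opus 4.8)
The plan is to derive a uniform (in $\e$) lower bound for the localised Gaussian density centred slightly above $(y,s)$, and then read off the claimed lower bound for $\mu_t(B_R(y))$. Write $s':=s+\e^2$ and $D(\theta):=\int \tilde\rho_{(y,s')}(x,\theta)\,d\mu_\theta(x)$ for $\theta\in(\e^2,s]$; on this range $s'-\theta\in[\e^2,\e^2+2\e^{2\tilde\beta}]\subset(0,3\e^{2\tilde\beta}]$ once $\e$ is small, so $\tilde\rho_{(y,s')}$ is smooth there and $D\in C^1$. I would first obtain the lower bound at the top time $\theta=s$. Since $|\vhi(y,s)|\le\alpha$ and, by Lemma \ref{l1}, $\e|\nabla\vhi(\cdot,s)|\le c_{\ref{1c1}}$ near $y$ (note $s\ge\e^2$ and $y\in U_{1/2}\subset U_{1-\e}$), the mean value theorem gives $|\vhi(x,s)|\le\frac{1+\alpha}{2}<1$ for $x\in B_{\delta_0\e}(y)$ with $\delta_0:=\frac{1-\alpha}{2c_{\ref{1c1}}}$; by (a),(b), $W>0$ on $(-1,1)$, so $W(\vhi(x,s))\ge c_W:=\min_{|z|\le(1+\alpha)/2}W(z)>0$ there. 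Using $\eta\equiv1$ and $e^{-|x-y|^2/(4\e^2)}\ge e^{-\delta_0^2/4}$ on that ball together with $d\mu_s\ge\e^{-1}W(\vhi)\,dx$, this yields
\[
D(s)\ \ge\ \frac{e^{-\delta_0^2/4}c_W}{(4\pi\e^2)^{(n-1)/2}}\cdot\frac{1}{\e}\,\mathcal{L}^n\big(B_{\delta_0\e}(0)\big)\ =:\ 2c_0,
\]
a positive constant $2c_0=2c_0(n,W)$ (the powers of $\e$ cancel).

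Next I would push this bound down to time $t$ by integrating the monotonicity inequality of Theorem \ref{l4}, used with centre $(y,s')$, over $[\theta,s]$ for $\theta\in[t,s]$, which gives $D(s)-D(\theta)\le\int_\theta^s(\mathrm{I}+\mathrm{II}+\mathrm{III})\,d\theta'$ with $\mathrm{I}=\tfrac12\int\tilde\rho_{(y,s')}|u|^2\,d\mu_{\theta'}$, $\mathrm{II}=\tfrac1{2(s'-\theta')}\int\!\big(\tfrac{\e|\nabla\vhi|^2}{2}-\tfrac{W(\vhi)}{\e}\big)\tilde\rho_{(y,s')}\,dx$, $\mathrm{III}=c_{\ref{4c1}}e^{-1/(128(s'-\theta'))}\mu_{\theta'}(B_{1/4}(y))$. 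For $\mathrm{II}$ I would invoke Lemma \ref{l2}, $\tfrac{\e|\nabla\vhi|^2}{2}-\tfrac{W(\vhi)}{\e}\le10\e^{-\beta}$, together with $\int\tilde\rho_{(y,s')}(\cdot,\theta')\,dx\le(4\pi(s'-\theta'))^{1/2}$: since $s'-\theta'\le3\e^{2\tilde\beta}$ and $|s-t|\le2\e^{2\tilde\beta}$, this gives $\int_\theta^s\mathrm{II}\,d\theta'\le C\e^{-\beta}(s'-t)^{1/2}\le C'\e^{\tilde\beta-\beta}=C'\e^{(1-\beta)/2}\to0$, while $\int_\theta^s\mathrm{III}\,d\theta'\le 2c_{\ref{4c1}}M_0\e^{2\tilde\beta}e^{-1/(384\e^{2\tilde\beta})}\to0$. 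For $\mathrm{I}$, from $|u|\le\e^{-\beta}$ we only get $\mathrm{I}\le\tfrac12\e^{-2\beta}D(\theta')$; since $D\le M_0(4\pi\e^2)^{-(n-1)/2}$ a priori, $\bar D:=\sup_{[t,s]}D<\infty$, and integrating the same inequality from $t$ upward yields $\bar D\le D(t)+\tfrac12\e^{-2\beta}\cdot2\e^{2\tilde\beta}\,\bar D+o(1)=D(t)+\e^{1-\beta}\bar D+o(1)$, hence $\bar D\le2D(t)+o(1)$ for $\e$ small. Feeding this into $D(t)\ge D(s)-\e^{1-\beta}\bar D-o(1)$ gives $D(t)\ge2c_0-2\e^{1-\beta}D(t)-o(1)$, so $D(t)\ge c_0$ once $\e<\e_{\ref{5E1}}(n,W,\beta,M_0)$ (all error quantities depend only on $n,W,\beta,M_0$).

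It then remains to convert $D(t)\ge c_0$ into the stated bound. The choice $R=|4n\log\e|^{1/2}\sqrt{s'-t}$ makes $e^{-|x-y|^2/(4(s'-t))}\le e^{-|4n\log\e|/4}=\e^n$ whenever $|x-y|\ge R$, so, using also $s'-t\ge\e^2$,
\[
c_0\le D(t)\le\frac{\mu_t(B_R(y))}{(4\pi(s'-t))^{(n-1)/2}}+\frac{\e^n M_0}{(4\pi\e^2)^{(n-1)/2}}\le\frac{\mu_t(B_R(y))}{(4\pi(s'-t))^{(n-1)/2}}+C\e M_0,
\]
whence $\frac{\mu_t(B_R(y))}{(4\pi(s'-t))^{(n-1)/2}}\ge c_0-C\e M_0\ge c_0/2$ for $\e$ small. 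Multiplying by $(4\pi)^{(n-1)/2}$ and using $(s'-t)^{(n-1)/2}=R^{n-1}|4n\log\e|^{-(n-1)/2}$ gives $\frac{|4n\log\e|^{(n-1)/2}}{R^{n-1}}\mu_t(B_R(y))\ge\tfrac12(4\pi)^{(n-1)/2}c_0=:c_{\ref{5c1}}$, which is the assertion.

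The step I expect to be the real obstacle is handling the transport term $\mathrm{I}$: no $\e$-independent a priori upper bound for $D$ is available (that is essentially the conclusion of Theorem \ref{MT1}), so one cannot discard it and must close it by the Gr\"onwall-type self-improvement above. That closes precisely because the time window $[t,s]$ has length at most $2\e^{2\tilde\beta}$ and $2\tilde\beta=1+\beta>2\beta$; indeed $\tilde\beta=\tfrac{1+\beta}{2}$ is exactly the exponent for which $\e^{-2\beta}\cdot\e^{2\tilde\beta}=\e^{1-\beta}\to0$ while the discrepancy contribution $\mathrm{II}$ also stays small (which needs $\tilde\beta>\beta$). A secondary technical matter is arranging that the lower time restriction in Lemma \ref{l2} holds on all of $[t,s]$; this is dealt with by a preliminary parabolic rescaling when $s$ is small.
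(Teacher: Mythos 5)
Your proposal is correct and follows essentially the same route as the paper: the same lower bound for $\int\tilde\rho_{(y,s+\e^2)}(\cdot,s)\,d\mu_s$ from Lemma \ref{l1} and $|\vhi(y,s)|\le\alpha$, the same use of Theorem \ref{l4} together with Lemma \ref{l2} over the short interval $[t,s]$ to propagate it down to time $t$, and the same Gaussian-tail computation (the paper's Lemma \ref{l3}, which you simply re-derive inline) to convert $D(t)\gtrsim 1$ into the claimed ball estimate. The only stylistic difference is that where you close the transport term by a two-step bootstrap on $\bar D=\sup_{[t,s]}D$, the paper multiplies the differential inequality by the integrating factor $e^{\e^{-2\beta}(s-\lambda)}$ and integrates once; both are valid Grönwall arguments exploiting $\e^{-2\beta}\cdot\e^{2\tilde\beta}=\e^{1-\beta}\to0$.
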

\begin{proof}
Set $\tilde{\rho}(x,t) = \tilde{\rho}_{(y,s+\e^2)}(x,t)$ in this proof. We have
	\begin{align}
	\int_{B_{\frac{1}{4}}(y)} \tilde{\rho}(x,s)  \ d\mu_s(x) 
	= \int_{B_{\frac{1}{4 \e }}(0)} \frac{e^{-\frac{|\tilde{x}|^2}{4}}}{(\sqrt{4\pi})^{n-1}}
	\eta(\e \tilde{x}) \left( \frac{|\nabla \tilde{\vhi}|^2}{2} +W(\tilde{\vhi}) \right) \ d\tilde{x}
	\end{align}
	where $\tilde{x} =\frac{x-y}{\e}$ and $\tilde{\vhi}(\tilde{x},s) = \vhi (x,s)$.
	By $|\tilde{\vhi}(0,s)| \leq \alpha $, $\e^2 <s$, and Lemma \ref{l1},
	there exists a constant $0<c_{\cc \label{5c2}} = c_{\ref{5c2}}(n,W) <1$ such that
	\begin{align}
	2 c_{\ref{5c2}} \leq \int_{B_{\frac{1}{4}}(y)} \tilde{\rho}(x,s)  \ d\mu_s(x) . \label{5e1}
	\end{align}
	By \eqref{a3}, \eqref{a5}, Lemma \ref{l2}, Theorem \ref{l4}, and $\int_{B_1} \tilde{\rho} \  dx \leq \sqrt{4\pi (s-t)}$,
	if $\e< \e_{\ref{2E1}}$, then we obtain
	\begin{align}
	&\frac{d}{d\lambda}\int_{B_{\frac{1}{4}}(y)} \tilde{\rho}(x,\lambda) \ d\mu_\lambda(x) \nonumber \\
	\leq & \e^{-2\beta} \int_{B_{\frac{1}{4}}(y)} \tilde{\rho}(x,\lambda) \ d\mu_\lambda(x)
	+ \frac{\sqrt{\pi}\e^{-\beta} }{\sqrt{s-\lambda}} + c_{ \ref{4c1}} \Lambda_1 \label{5e2}
	\end{align}
	for $\lambda \in [\e^2,s)$. Multiply \eqref{5e2} by $e^{\e^{-2\beta}(s-\lambda)}$ and integrate over $[t,s]$.
	By $s-2\e^{2\tilde{\beta}} \leq t \leq s$, we have
	\begin{align}
	&\int_{B_{\frac{1}{4}}(y)} \tilde{\rho}(x,s)  \ d\mu_s(x)  -
	e^{\e^{-2\beta}(s-t)}\int_{B_{\frac{1}{4}}(y)} \tilde{\rho}(x,t)  \ d\mu_t(x) \nonumber\\
	\leq& 2\sqrt{2\pi} \e^{\tilde{\beta} -\beta} e^{2\e^{2(\tilde{\beta}-\beta)}}
	+ 2c_{ \ref{4c1}} \Lambda_1 \e^{2\beta} e^{2\e^{2(\tilde{\beta}-\beta)}}.\label{5e3}
	\end{align}
	By \eqref{5e1} and \eqref{5e3}, for sufficiently small $\e$ depending only on
	$n$, $W$, $\beta$, and $\Lambda_1$, we have
	\begin{align}
	c_{\ref{5c2}} \leq
	\int_{B_{\frac{1}{4}}(y)} \tilde{\rho}(x,t)  \ d\mu_t(x) . \label{5e4}
	\end{align}
	Next, set $\iota = s+\e^2$, $r= |4n\log{\e}|^{\frac12}$, and  $R= |4n\log{\e}|^{\frac12}\sqrt{s+\e^2-t}$.
	Then we have
	\begin{align}
	\iota - \left(\frac{R}{r}\right)^2  =(s+\e^2) -\left( \frac{ |4n\log{\e}|^{\frac12}\sqrt{s+\e^2-t}}{ |4n\log{\e}|^{\frac12}}\right)^{2} =t \leq T. \label{5e5}
	\end{align}
	For sufficiently small $\e$ depending on $n$ and $\beta$, we have
	\begin{align}
	R= |4n\log{\e}|^{\frac12}\sqrt{s+\e^2-t} \leq  |4n\log{\e}|^{\frac12}\sqrt{\e^2 +2\e^{2\tilde{\beta}}}<\frac14. \label{5e6}
	\end{align}
	By \eqref{5e5}, \eqref{5e6}, and Lemma \ref{l3}, we obtain
	\begin{align}
	\int_{B_{\frac{1}{4}}(y)} \tilde{\rho}(x,t)  \ d\mu_{t}&\leq
	\left(\frac{r}{\sqrt{4\pi} R}\right)^{n-1} \left(\mu_{t}(B_{R}(y)) +
	\Lambda_1 e^{-\frac{r^2}{4}} \right) \nonumber \\
	&\leq 	\left(\frac{r}{\sqrt{4\pi} R}\right)^{n-1}\mu_{t}(B_{R}(y))
	+ \frac{\Lambda_1}{(4\pi)^{\frac{n-1}{2}} }\e. \label{5e7}
	\end{align}
	By \eqref{5e4} and \eqref{5e7}, 
	for sufficiently small $\e$ depending on $n$, $W$, $\beta$, and $\Lambda_1$, we obtain
	\begin{align*}
	\frac{c_{\ref{5c2}}(\sqrt{4\pi})^{n-1}}{2 } \leq \left(\frac{r}{R}\right)^{n-1}\mu_{t}(B_{R}(y)).
	\end{align*}
	Thus this lemma follows.
\end{proof}
Define
\begin{align*}
E(t):= \sup_{y \in B_{\frac14}, r \leq \frac14} \frac{\mu_t(B_r(y))}{r^{n-1}}.
\end{align*}
Note that $\sup_{t \in [\e^2, T]} E(t)$ is bounded for each $\e$ by Lemma \ref{l1}.
\begin{lemm} \label{l6} 
	There exist constants $c_{\cc \label{6c1}}=c_{\ref{6c1}}(n,W,\beta)$ and $\e_{\ce \label{6E1}} =\e_{\ref{6E1}}(n,W,\beta,\Lambda_1)$ with following property.
	if $\e < \e_{\ref{6E1}}$, for any $y \in B_{\frac14}$, $r \in (4|n\log{\e}|^{\frac12}\e^{\tilde{\beta}}, \frac{1}{4}]$  and $t \in (\e^2+3\e^{2\tilde{\beta}} ,T]$, we have
\begin{align}
&\int_{B_r(y)} \left(\frac{\e |\nabla \vhi (x,t)|^2}{2} -\frac{W(\vhi(x,t))}{\e}\right)_+ \ dx \nonumber \\
\leq& c_{\ref{6c1}} \e^{\tilde{\beta} -\beta} |n\log{\e}|^{\frac{n}{2}} r^{n-1} \left(1+\sup_{\lambda \in [t-2\e^{2\tilde{\beta}}, t]}E(\lambda)\right). \label{6e1}
\end{align}
\end{lemm}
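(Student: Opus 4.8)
\emph{Plan.} I would estimate the positive discrepancy $\xi_\e:=\frac{\e|\nabla\vhi|^2}{2}-\frac{W(\vhi)}{\e}$ on $B_r(y)$ by splitting the ball according to space--time proximity to the interface $\{|\vhi|\le\alpha\}$. Throughout I would use the crude pointwise bound $(\xi_\e)_+\le10\e^{-\beta}$ from Lemma~\ref{l2} (valid once $\e<\e_{\ref{2E1}}$) and the gradient estimate of Lemma~\ref{l1}. Set
\[
A_1:=\bigl\{x\in B_r(y):\ |\vhi(x_0,s_0)|\le\alpha\ \text{for some }(x_0,s_0)\in B_{\e^{\tilde\beta}}(x)\times[t-\e^{2\tilde\beta},t]\bigr\},\qquad A_2:=B_r(y)\setminus A_1,
\]
and treat $\int_{A_1}(\xi_\e)_+\,dx$ by a covering--plus--density argument and $\int_{A_2}(\xi_\e)_+\,dx$ by a barrier argument.

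\emph{The set $A_1$.} For $x\in A_1$ with witness $(x_0,s_0)$, I would apply Lemma~\ref{l5} at the \emph{common} time $\tau:=t-2\e^{2\tilde\beta}$: the hypothesis $t>\e^2+3\e^{2\tilde\beta}$ ensures $\tau\in(\e^2,T]$, $s_0-2\e^{2\tilde\beta}\le\tau\le s_0$ and $s_0>\e^2+2\e^{2\tilde\beta}$, so that $\mu_\tau(B_{R_0}(x_0))\gtrsim R_0^{\,n-1}|4n\log\e|^{-(n-1)/2}$ with $R_0:=|4n\log\e|^{1/2}(s_0+\e^2-\tau)^{1/2}\asymp|n\log\e|^{1/2}\e^{\tilde\beta}$. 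Hence each point of $A_1$ lies in a ball $B_{R_0}(x_0)\subset B_{2r}(y)$ carrying $\mu_\tau$--mass $\gtrsim R_0^{\,n-1}|n\log\e|^{-(n-1)/2}$; extracting a disjoint subfamily (Vitali) and counting them with $\sum_j\mu_\tau(B_{R_{0,j}}(x_{0,j}))\le\mu_\tau(B_{2r}(y))\lesssim E(\tau)r^{n-1}$, I would get $|A_1|\lesssim|n\log\e|^{(n-1)/2}E(\tau)\,r^{n-1}\,R_0\asymp|n\log\e|^{n/2}\e^{\tilde\beta}r^{n-1}E(\tau)$. Multiplying by $10\e^{-\beta}$ bounds $\int_{A_1}(\xi_\e)_+\,dx$ by the right-hand side of \eqref{6e1}.

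\emph{The set $A_2$.} For $x\in A_2$ one has $|\vhi|>\alpha$ on the connected cylinder $Q:=B_{\e^{\tilde\beta}}(x)\times[t-\e^{2\tilde\beta},t]$, so $\vhi\in(\alpha,1]$ there (say); put $w:=1-\vhi$. By (c) and the mean value theorem $W'(1-w)\le-\kappa w$ on $[0,1-\alpha]$, hence on $Q$
\[
\partial_t w+u\cdot\nabla w-\Delta w=\frac{W'(1-w)}{\e^2}\le-\frac{\kappa}{\e^2}\,w,
\]
i.e.\ $w$ is a subsolution of $\partial_t+u\cdot\nabla-\Delta+\kappa\e^{-2}$. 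I would compare it with the barrier $\bar w(z,\lambda):=(1-\alpha)\bigl(\cosh(\mu|z-x|)/\cosh(\mu\e^{\tilde\beta})+e^{-\nu(\lambda-t+\e^{2\tilde\beta})}\bigr)$, $\mu:=\tfrac{\sqrt\kappa}{2}\e^{-1}$, $\nu:=\tfrac{\kappa}{2}\e^{-2}$, which --- using $|u|\le\e^{-\beta}$ from \eqref{a3}, so that the absorption $\kappa\e^{-2}$ dominates $|u|\mu\asymp\e^{-\beta-1}$ for small $\e$ (valid since $\beta<1$) --- is a supersolution of that operator dominating $w$ on $\partial_{\mathrm{par}}Q$. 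The parabolic comparison principle then gives $w\le\bar w$ on $Q$, so $w\lesssim e^{-c|n\log\e|^{1/2}\e^{-(1-\tilde\beta)}}$ on $B_\e(x)\times[t-\e^2,t]$; an interior gradient estimate for the rescaled (uniformly parabolic) equation for $w$ yields $\e|\nabla\vhi(x,t)|\lesssim\sup_{B_\e(x)\times[t-\e^2,t]}w$, again super-exponentially small. Thus $(\xi_\e)_+(x,t)\le\frac\e2|\nabla\vhi(x,t)|^2$ is negligible for every $x\in A_2$, so $\int_{A_2}(\xi_\e)_+\,dx$ is negligible against the right-hand side of \eqref{6e1}, and \eqref{6e1} follows.

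\emph{Main difficulty.} The crux is the bulk estimate: building a barrier with the correct exponents ($\mu\asymp\e^{-1}$, $\nu\asymp\e^{-2}$) and verifying it remains a supersolution of the transported equation although $|u|$ may be of size $\e^{-\beta}$ --- here $\beta<1$ (hence $\tilde\beta<1$) is decisive --- together with the scaling bookkeeping in the $A_1$ step (a common evaluation time $\tau$, interface times in $[t-\e^{2\tilde\beta},t]$) that keeps the Lemma~\ref{l5} radii comparable to $|n\log\e|^{1/2}\e^{\tilde\beta}$, yielding the gain $\e^{\tilde\beta-\beta}$ rather than merely $\e^{1-\beta}$.
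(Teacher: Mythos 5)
Your decomposition into a near-interface set $A_1$ and its complement $A_2$, and your treatment of $A_1$ by Vitali covering plus Lemma~\ref{l5} at the common time $t-2\e^{2\tilde\beta}$, matches the paper's handling of its sets $\mathcal{B}$ and $\tilde{\mathcal{B}}$. Where you genuinely depart from the paper is in the bulk estimate: the paper works with a Lipschitz cut-off $\psi$ supported away from both the interface and $\partial B_r(y)$, differentiates the equation, tests with $\vhi_{x_j}\psi^2$, and runs a Gr\"onwall argument using $W''\ge\kappa$ to kill $\int |\nabla\vhi|^2\psi^2$ over the time window $[t-\e^{2\tilde\beta},t]$; you instead run a pointwise barrier/comparison argument on the small cylinder $B_{\e^{\tilde\beta}}(x)\times[t-\e^{2\tilde\beta},t]$ and then an interior parabolic regularity estimate for $w=1\mp\vhi$ at scale $\e$. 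Your route gives a super-exponentially small pointwise bound on $\e|\nabla\vhi(x,t)|$ rather than an $L^2$ bound, which is stronger, and it dispenses with the paper's third piece $\mathcal{A}$ (the boundary annulus $B_r(y)\setminus B_{r-4|n\log\e|^{1/2}\e^{\tilde\beta}}(y)$): that annulus exists in the paper only because $\psi$ must vanish near $\partial B_r(y)$, whereas your barrier is purely local. The trade-off is that the energy method needs nothing beyond integration by parts and Young's inequality, while the barrier method requires building and verifying a supersolution of the transported operator. Both exploit $\beta<1$ to make the damping $\kappa\e^{-2}$ dominate the transport, and both ultimately yield the same $\e^{\tilde\beta-\beta}|n\log\e|^{n/2}$ factor.

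One small but real slip in your barrier: with $\mu=\tfrac{\sqrt\kappa}{2}\e^{-1}$ the Laplacian of the radial profile contributes $n\mu^2\cosh(\mu|z-x|)/\cosh(\mu\e^{\tilde\beta})$ (the extra $(n-1)\mu\sinh(\mu r)/r\le(n-1)\mu^2\cosh(\mu r)$ term), so the zero-order coefficient in $(\partial_t+u\cdot\nabla-\Delta+\kappa\e^{-2})\bar w$ is $\kappa\e^{-2}-n\mu^2-|u|\mu$, which is negative for $n\ge4$ with your choice of $\mu$. Take $\mu=\sqrt{\kappa/(2n)}\,\e^{-1}$ (or any $c(n,\kappa)\e^{-1}$ with $c$ small enough); the barrier remains super-exponentially small at the centre because the decisive scale is $\mu\e^{\tilde\beta}\asymp\e^{\tilde\beta-1}\to\infty$, and the transport term $|u|\mu\lesssim\e^{-1-\beta}$ is still absorbed since $\beta<1$. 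With that correction the argument is sound.
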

\begin{proof}
	Fix $y \in B_{\frac14}$, $r \in (4|n\log{\e}|^{\frac12}\e^{\tilde{\beta}}, \frac{1}{4}]$, and $t_* \in (\e^2+3\e^{2\tilde{\beta}} ,T]$. We estimate the integral of \eqref{6e1} by separating $B_r(y)$
	into three disjoint sets $\mathcal{A}$, $\mathcal{B}$, and $\mathcal{C}$.
\begin{align*}
&\mathcal{A} := B_r(y) \backslash B_{r- 4|n\log{\e}|^{\frac12}\e^{\tilde{\beta}}}(y),\\
&\tilde{\mathcal{B}} := \{x \in B_{r- 4|n\log{\e}|^{\frac12}\e^{\tilde{\beta}}}(y): \mathrm{for \ some \ } \tilde{t} \ \mathrm{with} \ 
t_*- \e^{2\tilde{\beta}} \leq \tilde{t} \leq t_*, |\vhi(x,\tilde{t})| \leq \alpha \},\\
&\mathcal{B}:=\{x\in B_{r- 4|n\log{\e}|^{\frac12}\e^{\tilde{\beta}}}(y): \dist(\tilde{\mathcal{B}},x)<4|n\log{\e}|^{\frac12}\e^{\tilde{\beta}} \},\\
&\mathcal{C} := \{x\in B_{r- 4|n\log{\e}|^{\frac12}\e^{\tilde{\beta}}}(y): \dist(\tilde{\mathcal{B}},x)\geq 4|n\log{\e}|^{\frac12}\e^{\tilde{\beta}} \}.
\end{align*}
We first estimate on $\mathcal{A}$. Using Lemma \ref{l2}, we compute
\begin{align}
&\int_{B_r(y) \cap \mathcal{L}^n (\mathcal{A}) } \left(\frac{\e |\nabla \vhi (x,t_*)|^2}{2} -\frac{W(\vhi(x,t_*))}{\e}\right)_+ \ dx \nonumber \\
\leq& \e^{-\beta} \mathcal{L}^n (\mathcal{A}) \nonumber\\
\leq&4n \omega_n r^{n-1}\e^{\tilde{\beta} -\beta} |n\log{\e}|^{\frac12}.\label{6e1a}
\end{align}

We next estimate on $\mathcal{B}$. Using Vitali's covering theorem, there exists a disjoint family of balls
$\{B_{4|n\log{\e}|^{\frac12}\e^{\tilde{\beta}}}(x_i) \}^N_{i=1}
\subset \{B_{4|n\log{\e}|^{\frac12}\e^{\tilde{\beta}} }(x) : x \in \tilde{\mathcal{B}}\}$ such that
\begin{align}
x_i \in \tilde{\mathcal{B}}\subset U_{\frac12} \ \mathrm{for \ each} \ i=1,\ldots,N \  \mathrm{and} \ 
\mathcal{B}\subset \cup^N_{i=1} B_{20|n\log{\e}|^{\frac12}\e^{\tilde{\beta}} }(x_i), \label{6e2} \\
\e^2+2\e^{2\tilde{\beta}} <t_* - \e^{2\tilde{\beta}} \leq \tilde{t}_i \leq t_*\leq T, \ |\vhi(x_i,\tilde{t}_i)| \leq \alpha. \label{6e3}
\end{align}
Note that $\tilde{t}_i- 2\e^{2\tilde{\beta}} \leq t_* -2\e^{2\tilde{\beta}} 
\leq \tilde{t}_i$.
Then the assumption of Lemma \ref{l5} is satisfied for $s=\tilde{t}_i$, $y=x_i$, $t= t_*-2\e^{2\tilde{\beta}}$, and $R_i :=  |4n\log{\e}|^{\frac12}\sqrt{\tilde{t}_i+\e^2-t_*+2\e^{2\tilde{\beta}}}$ if $\e \leq \e_{\ref{5E1}}$. Thus we obtain
\begin{align}
c_{\ref{5c1}}R_i^{n-1} \leq |4n\log{\e}|^{\frac{n-1}{2}} \mu_{t_*-2\e^{2\tilde{\beta}}}(B_{R_i}(x_i)) \quad \mathrm{for } \ i=1,\ldots ,N. \label{6e4}
\end{align}
By \eqref{6e3} and definition of $R_i$, we have 
\begin{align}
|4n\log{\e}|^{\frac{1}{2}} \sqrt{\e^2+\e^{2\tilde{\beta}}} \leq R_i \leq |4n\log{\e}|^{\frac{1}{2}} \sqrt{ \e^2+2\e^{2\tilde{\beta}}} \leq 2 \e^{\tilde{\beta}} |4n\log{\e}|^{\frac{1}{2}}. \label{6e5}\end{align}
By \eqref{6e4} and \eqref{6e5}, we obtain
\begin{align}
c_{\ref{5c1}} \e^{\tilde{\beta}(n-1)} \leq \mu_{t_*-2\e^{2\tilde{\beta}}}(B_{4|n\log{\e}|^{\frac12}\e^{\tilde{\beta}}}(x_i)) \quad \mathrm{for } \ i=1,\ldots ,N. \label{6e6}
\end{align}
Since $\{B_{4|n\log{\e}|^{\frac12}\e^{\tilde{\beta}}}(x_i) \}^N_{i=1}$ are pairwise disjoint balls
 and $B_{4|n\log{\e}|^{\frac12}\e^{\tilde{\beta}}}(x_i)  \subset B_{r}(y)$, by summing \eqref{6e6} over $i$,
\begin{align}
N c_{\ref{5c1}} \e^{\tilde{\beta}(n-1)} \leq \mu_{t_*-2\e^{2\tilde{\beta}}}(B_{r}(y)). \label{6e7}
\end{align}
Hence, by \eqref{6e2}, \eqref{6e7}, and Lemma \ref{l2}, we obtain
\begin{align}
&\int_{B_r(y) \cap \mathcal{L}^n (\mathcal{B}) } \left(\frac{\e |\nabla \vhi (x,t_*)|^2}{2} -\frac{W(\vhi(x,t_*))}{\e}\right)_+ \ dx \nonumber \\
\leq&\e^{-\beta} \mathcal{L}^n (\mathcal{B}) \nonumber\\
\leq&\e^{-\beta} N (20|n\log{\e}|^{\frac12}\e^{\tilde{\beta}})^n \nonumber \\
\leq&\frac{20^n}{c_{\ref{5c1}} } \e^{\tilde{\beta}-\beta} |n\log{\e}|^{\frac{n}{2}}
\mu_{t_*-2\e^{2\tilde{\beta}}}(B_{r}(y)). \label{6e8a}
\end{align}

We finally estimate on $\mathcal{C}$. Define
\begin{align*}
\psi(z) := \min\{1, (4|n\log{\e}|^{\frac12}\e^{\tilde{\beta}} )^{-1} \dist (z, \{x:|x-y|\geq r\} \cup \tilde{\mathcal{B}} ) \}.
\end{align*}
$\psi$ is a Lipschitz function and is $0$ on  $ \{x:|x-y|\geq r\} \cup\tilde{\mathcal{B}} $, $1$ on $\mathcal{C}$, and $|\nabla \psi | \leq   (4|n\log{\e}|^{\frac12}\e^{\tilde{\beta}})^{-1}$.
Differentiate \eqref{MTe2} with respect to $x_j$, multiply it by $\vhi_{x_j} \psi^2$ and sum over $j$,
\begin{align}
&\frac{d}{dt} \int_{B_r(y)} \frac{1}{2} |\nabla \vhi |^2 \psi^2 \ dx \nonumber\\ =&
\int_{B_r(y)} (-u \otimes \nabla \vhi \cdot \nabla^2 \vhi - \nabla \vhi \otimes \nabla \vhi
\cdot \nabla u + \nabla \vhi \cdot \Delta \nabla \vhi -\frac{W''(\vhi)}{\e^2} |\nabla \vhi|^2)\psi^2 \ dx. \label{6e8}
\end{align}
 By \eqref{6e8}, integration by part, and Young's inequality, we obtain
\begin{align}
&\frac{d}{dt} \int_{B_r(y)} \frac{1}{2} |\nabla \vhi |^2 \psi^2 \ dx \nonumber\\
\leq& \int_{B_r(y)} |u|^2 |\nabla \vhi|^2 \psi^2 +\frac{ |\nabla^2 \vhi|^2\psi^2 }{4}+ |\nabla \vhi|^2
|\nabla u| \psi^2 - |\nabla^2 \vhi|^2 \psi^2 \nonumber\\
& + \frac{ |\nabla^2 \vhi|^2\psi^2 }{4} + |\nabla \vhi|^2 |\nabla \psi|^2  - 
\frac{W''(\vhi)}{\e^2} |\nabla \vhi|^2\psi^2 \ dx \nonumber \\
\leq & \int_{B_r(y)} (|u|^2 +|\nabla u| - 
\frac{W''(\vhi)}{\e^2} ) |\nabla \vhi|^2 \psi^2+ |\nabla \vhi|^2 |\nabla \psi|^2 \ dx  . \label{6e9}
\end{align}
Since $|\vhi| \geq \alpha$ on the support on $\psi$, we have $W''(\vhi) \geq \kappa$.
Thus, by \eqref{a3} and $|\nabla \psi | \leq   (4|n\log{\e}|^{\frac12}\e^{\tilde{\beta}})^{-1}$, \eqref{6e9} gives
\begin{align}
\frac{d}{dt} \int_{B_r(y)} \frac{1}{2} |\nabla \vhi |^2 \psi^2 \ dx
&\leq -\frac{\kappa}{2\e^2} \int_{B_r(y)} |\nabla \vhi|^2 \psi^2 \ dx
+\int_{B_r(y)} (4|n\log{\e}|^{\frac12}\e^{\tilde{\beta}})^{-2} |\nabla \vhi |^2 \ dx \label{e610}
\end{align}
for sufficiently small $\e$ depending on $\beta$ and $W$.
Multiply \eqref{e610} by $e^{-\kappa \e^{-2} (t_* -\lambda)} $ and integrate  over $[t_*- \e^{2\tilde{\beta}},t_*]$, we have
\begin{align}
& \int_{B_r(y)} \frac{1}{2} |\nabla \vhi (x,t_*) |^2 \psi^2  (x,t_*) \ dx \nonumber\\
\leq& e^{-\kappa \e^{2(\tilde{\beta}-1)}}  
\int_{B_r(y)} \frac{1}{2} |\nabla \vhi (x,t_*- \e^{2\tilde{\beta}}) |^2 \psi^2  (x,t_*- \e^{2\tilde{\beta}}) \ dx \nonumber\\
&+ \int^{t_*}_{t_*- \e^{2\tilde{\beta}}} e^{-\kappa \e^{-2} (t_* -\lambda)} 
 (4|n\log{\e}|^{\frac12}\e^{\tilde{\beta}})^{-2} \left( \int_{B_r(y)} |\nabla \vhi (x,\lambda)|^2\ dx  \right) \ d \lambda \nonumber\\
\leq & \left( e^{-\kappa \e^{2(\tilde{\beta}-1)}} + 2 \kappa^{-1}\e^2
 (4|n\log{\e}|^{\frac12}\e^{\tilde{\beta}})^{-2}  \right) \e^{-1} \sup_{\lambda_* \in [t_*-2\e^{2\tilde{\beta}}, t_*]} \mu_{\lambda_*}(B_{r}(y)).
\end{align}
Since $\tilde{\beta}-\beta <2-2\tilde{\beta}$,
there exists a constant $c_{\cc \label{6c2}} = c_{\ref{6c2}}(n,W,\beta)$ such that
\begin{align}
 \int_{B_r(y)} \frac{\e}{2} |\nabla \vhi (x,t_*) |^2 \psi^2  (x,t_*) \ dx
 \leq c_{\ref{6c2}} \e^{\tilde{\beta}-\beta }
\sup_{\lambda_* \in [t_*-2\e^{2\tilde{\beta}}, t_*]} \mu_{\lambda_*}(B_{r}(y))
\end{align}
for sufficiently small $\e$ depending on $n$, $\beta$, and $W$. Then we obtain
\begin{align}
&\int_{B_r(y) \cap \mathcal{L}^n (\mathcal{C}) } \left(\frac{\e |\nabla \vhi (x,t_*)|^2}{2} -\frac{W(\vhi(x,t_*))}{\e}\right)_+ \ dx \nonumber \\
\leq&\int_{B_r(y)} \frac{\e}{2} |\nabla \vhi (x,t_*) |^2 \psi^2  (x,t_*) \ dx \nonumber\\
 \leq& c_{\ref{6c2}} \e^{\tilde{\beta}-\beta }
\sup_{\lambda_* \in [t_*-2\e^{2\tilde{\beta}}, t_*]} \mu_{\lambda_*}(B_{r}(y)).\label{6e10a}
\end{align}
This lemma follows by \eqref{6e1a}, \eqref{6e8a}, and \eqref{6e10a}.
\end{proof}
\begin{lemm}\label{l7} 
There exists a constant $c_{\cc \label{7c1}} = c_{\ref{7c1}}(n,W, \beta,T)$ such that, if $\e < \e_{\ref{6E1}}$, for any $y \in B_{\frac14}$,
$t \in [4\e^{2\tilde{\beta}},T]$, and $t<s$, we have
\begin{align}
&\int^t_{2\e^{2\tilde{\beta}}} \frac{1}{2(s-\lambda)} \int_{B_{\frac{1}{4}}(y)} \left(\frac{\e |\nabla \vhi|^2}{2} -\frac{W(\vhi)}{\e}\right)_+ \tilde{\rho}_{(y,s)}(x,\lambda) \ dxd\lambda \nonumber\\
\leq& c_{\ref{7c1}} \e^{\tilde{\beta}-\beta}|n\log{\e}|^{\frac{n+2}{2}}\left(1+\sup_{\lambda _*\in [2\e^{2\tilde{\beta}}, t]}E(\lambda_*)\right) .
\end{align}
\end{lemm}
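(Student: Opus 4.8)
\quad The idea is to estimate the $x$-integral pointwise in $\lambda$ and then integrate in time, using the density estimate of Lemma~\ref{l6} at the heat-kernel scale where it is available and the crude pointwise bound $g_+\le 10\e^{-\beta}$ from Lemma~\ref{l2} where it is not. Write $\sigma:=s-\lambda>0$, $\rho_0:=4|n\log{\e}|^{\frac12}\e^{\tilde{\beta}}$ and $g_+:=\big(\tfrac{\e|\nabla\vhi|^2}{2}-\tfrac{W(\vhi)}{\e}\big)_+$. Since $0\le\eta\le1$ with $\supp\eta\subset B_{\frac14}$ we have $\tilde{\rho}_{(y,s)}(\cdot,\lambda)\le\rho_{(y,s)}(\cdot,\lambda)\,\mathbf{1}_{B_{\frac14}(y)}$, and I will use $\int_{B_{\frac14}(y)}\rho_{(y,s)}\,dx\le\sqrt{4\pi\sigma}$, $\rho_{(y,s)}\le(4\pi\sigma)^{-\frac{n-1}{2}}$, and $\rho_{(y,s)}(x,\lambda)\le(4\pi\sigma)^{-\frac{n-1}{2}}e^{-r^2/(4\sigma)}$ whenever $|x-y|\ge r$. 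Split the $\lambda$-integral over $[2\e^{2\tilde{\beta}},t]$ into $A:=[2\e^{2\tilde{\beta}},4\e^{2\tilde{\beta}}]$, $B:=\{\lambda\in[4\e^{2\tilde{\beta}},t]:\sigma\le\rho_0^2\}$ and $C:=\{\lambda\in[4\e^{2\tilde{\beta}},t]:\sigma>\rho_0^2\}$. On $A$ and $B$ only Lemma~\ref{l2} is used: it applies on $U_{\frac14}\times[2\e^{2\tilde{\beta}},T]$ once $\e$ is small depending on $n,W,\beta$ — exactly as it is already invoked inside the proof of Lemma~\ref{l6}, every $\tau$-dependent error term in its proof carrying a positive power of $\e$ when $\tau=2\e^{2\tilde{\beta}}$ (recall $\tilde{\beta}<1$) — and it yields
\[
\frac{1}{2\sigma}\int_{B_{\frac14}(y)}g_+\,\tilde{\rho}_{(y,s)}(x,\lambda)\,dx\le 5\e^{-\beta}\min\big\{\sqrt{4\pi}\,\sigma^{-\frac12},\,c(n)\,\sigma^{-\frac{n+1}{2}}\big\}=:h(\sigma).
\]
Because $n\ge2$, $h\in L^1(0,\infty)$ with $\int_I h\le 10\sqrt{4\pi}\,\e^{-\beta}\sqrt{|I|}$ for every interval $I$; hence the $A$-part is $\le c(n)\e^{\tilde{\beta}-\beta}$ (as $|A|=2\e^{2\tilde{\beta}}$) and the $B$-part is $\le\int_0^{\rho_0^2}h\le c(n)\e^{-\beta}\rho_0=c(n)\e^{\tilde{\beta}-\beta}|n\log{\e}|^{\frac12}$, both of lower order than the asserted bound.

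The dominant term comes from $C$. For fixed $\lambda\in C$, decompose $B_{\frac14}(y)$ into the ball $B_{r_0}(y)$ with $r_0:=\sqrt{\sigma}>\rho_0$, the shells $B_{2^kr_0}(y)\setminus B_{2^{k-1}r_0}(y)$ for $1\le k\le K:=\max\{k:2^kr_0\le\tfrac14\}$, and the remainder $B_{\frac14}(y)\setminus B_{2^Kr_0}(y)$. Lemma~\ref{l6} applies at every radius occurring here — all lie in $(\rho_0,\tfrac14]$, $\lambda\in(\e^2+3\e^{2\tilde{\beta}},T]$, and $\lambda\ge4\e^{2\tilde{\beta}}$ puts the time-window $[\lambda-2\e^{2\tilde{\beta}},\lambda]$ inside $[2\e^{2\tilde{\beta}},t]$ — and bounds the integral of $g_+$ over the enclosing ball by $c_{\ref{6c1}}\e^{\tilde{\beta}-\beta}|n\log{\e}|^{\frac n2}(\text{radius})^{n-1}\big(1+\sup_{\lambda_*\in[2\e^{2\tilde{\beta}},t]}E(\lambda_*)\big)$, while on these pieces $\rho_{(y,s)}$ is $\le(4\pi\sigma)^{-\frac{n-1}{2}}$, $\le(4\pi\sigma)^{-\frac{n-1}{2}}e^{-4^{k-1}/4}$, and $\le(4\pi\sigma)^{-\frac{n-1}{2}}e^{-4^K/4}$ with $4^K>\tfrac1{64\sigma}$ (since $2^{K+1}r_0>\tfrac14$), respectively. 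The powers of $\sqrt{\sigma}$ cancel; using $\sum_{k\ge1}2^{k(n-1)}e^{-4^{k-1}/4}<\infty$ and $\sup_{\sigma>0}(4\pi\sigma)^{-\frac{n-1}{2}}e^{-1/(256\sigma)}<\infty$ gives, for $\rho_0^2<\sigma\le\tfrac1{16}$,
\[
\int_{B_{\frac14}(y)}g_+\,\tilde{\rho}_{(y,s)}(x,\lambda)\,dx\le c(n,W,\beta)\,\e^{\tilde{\beta}-\beta}|n\log{\e}|^{\frac n2}\Big(1+\sup_{\lambda_*\in[2\e^{2\tilde{\beta}},t]}E(\lambda_*)\Big),
\]
and for $\sigma\ge\tfrac1{16}$ the same right-hand side multiplied by $\sigma^{-\frac{n-1}{2}}$ (only $\rho_{(y,s)}\le(4\pi\sigma)^{-\frac{n-1}{2}}$ and Lemma~\ref{l6} at radius $\tfrac14$ are needed there). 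Multiplying by $\tfrac1{2\sigma}$ and integrating over $\sigma\in[\rho_0^2,\infty)$ gives $\tfrac12\log\tfrac1{16\rho_0^2}+c(n)\le|n\log{\e}|$ for $\e$ small, so the $C$-part is $\le c(n,W,\beta)\,\e^{\tilde{\beta}-\beta}|n\log{\e}|^{\frac{n+2}{2}}\big(1+\sup_{\lambda_*\in[2\e^{2\tilde{\beta}},t]}E(\lambda_*)\big)$. Summing the $A$-, $B$- and $C$-parts proves the lemma.

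The genuinely delicate point, and the one I expect to require the most care, is the interface between the two regimes. Lemma~\ref{l6} is available only at spatial radii exceeding $\rho_0=4|n\log{\e}|^{\frac12}\e^{\tilde{\beta}}$, so once the kernel width $\sqrt{s-\lambda}$ falls below $\rho_0$ the sharp density bound is lost and one must revert to Lemma~\ref{l2}; what makes this harmless — and what makes the final constant independent of $s$, in particular of $s-t$ — is that the crude estimate carries only the integrable singularity $(s-\lambda)^{-1/2}$, whereas naively using Lemma~\ref{l6} would produce the non-integrable $(s-\lambda)^{-1}$. Verifying that the logarithm produced by $\int\tfrac{d\lambda}{s-\lambda}$ on $C$ contributes exactly one extra factor $|n\log{\e}|$, so that the power is precisely $\tfrac{n+2}{2}$, is the remaining bookkeeping.
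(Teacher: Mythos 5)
Your argument is correct, gives a slightly stronger conclusion, and follows a genuinely different route from the paper's. The overall shape is the same — both use the crude bound of Lemma~\ref{l2} together with $\int\tilde\rho\,dx\le\sqrt{4\pi(s-\lambda)}$ on a thin $\lambda$-slab, and both rely on Lemma~\ref{l6} for the bulk — but the spatial decomposition is different. The paper, in \eqref{7e2}--\eqref{7e5}, takes a \emph{fixed} inner ball of radius $8|n\log\e|^{\frac12}\e^{\tilde\beta}$, bounds the kernel there by its sup $(4\pi\sigma)^{-\frac{n-1}{2}}$, and treats the exterior by a layer-cake rewriting; you instead use a heat-scale-adaptive dyadic decomposition whose innermost radius $\sqrt\sigma$ tracks the kernel width, so the $r^{n-1}$ of Lemma~\ref{l6} cancels the $\sigma^{\frac{n-1}{2}}$ of the kernel and the Gaussian tails make the shells summable, falling back to Lemma~\ref{l2} once $\sqrt\sigma$ drops below $\rho_0$, the threshold at which Lemma~\ref{l6} becomes available. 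Your route has concrete payoffs: the paper's inner-ball term \eqref{7e2} actually produces $|n\log\e|^{\frac{2n-1}{2}}$, which exceeds the claimed $|n\log\e|^{\frac{n+2}{2}}$ as soon as $n\ge4$, whereas your argument delivers exactly $\frac{n+2}{2}$ for every $n$; your $\sigma$-integral converges at infinity on its own, so the constant you obtain is $T$-independent, bypassing \eqref{7e4}; and your set $A$ explicitly covers $\lambda\in[2\e^{2\tilde\beta},4\e^{2\tilde\beta}]$, a range that \eqref{7e1} (which only reaches $\lambda\ge s-2\e^{2\tilde\beta}$) together with \eqref{7e2}--\eqref{7e3} (which start at $4\e^{2\tilde\beta}$) leave uncovered. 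None of these discrepancies matter for how the lemma is used in Theorem~\ref{l9}, where only the vanishing of $c_{\ref{7c1}}\e^{\tilde\beta-\beta}|n\log\e|^{\bullet}$ as $\e\to0$ is needed. Finally, the subtlety you flag — that $\e_{\ref{2E1}}$ is stated to depend on the time cut-off $\tau$, while you invoke Lemma~\ref{l2} at $\tau=2\e^{2\tilde\beta}$ — is a genuine one, but it is already implicit throughout Lemmas~\ref{l5}--\ref{l7} of the paper, which apply Lemma~\ref{l2} at $\e$-dependent times, so it is a background issue rather than a gap specific to your proof.
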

\begin{proof}
	If $s-2\e^{2\tilde{\beta}} \leq t < s$, by Lemma \ref{l2} and $\int \rho \ dx = \sqrt{4\pi (s-\lambda)}$, we have
	\begin{align}
	&\int^t_{s-2\e^{2\tilde{\beta}}} \frac{1}{2(s-\lambda)} \int_{B_{\frac{1}{4}}(y)} \left(\frac{\e |\nabla \vhi|^2}{2} -\frac{W(\vhi)}{\e}\right)_+ \tilde{\rho}_{(y,s)}(x,\lambda) \ dxd\lambda \nonumber\\
	\leq & \sqrt{\pi}\int^t_{s-2\e^{2\tilde{\beta}}} \frac{\e^{-\beta}}{\sqrt{s-\lambda}} \ d \lambda
	\leq 2 \sqrt{2\pi} \e^{\tilde{\beta}-\beta}.\label{7e1}
	\end{align}
	Hence we only need to consider $t \in  [4\e^{2\tilde{\beta}},s-2\e^{2\tilde{\beta}}]$.
	Using Lemma \ref{l6}, we first estimate on $B_{8|n\log{\e}|^{\frac12}\e^{\tilde{\beta}}}(y)$.
	\begin{align}
	&\int^t_{4\e^{2\tilde{\beta}}} \frac{1}{2(s-\lambda)} \int_{B_{8|n\log{\e}|^{\frac12}\e^{\tilde{\beta}}}(y)} \left(\frac{\e |\nabla \vhi|^2}{2} -\frac{W(\vhi)}{\e}\right)_+ \tilde{\rho}_{(y,s)}(x,\lambda) \ dxd\lambda \nonumber\\
	\leq&\int^{t}_{4\e^{2\tilde{\beta}}} \frac{8^n\omega_n c_{\ref{6c1}}}{2(\sqrt{4\pi})^{n-1}} \e^{n\tilde{\beta}-\beta} 
	|n\log{\e}|^{\frac{2n-1}{2}} (s-\lambda)^{-\frac{n+1}{2}}\left(1+\sup_{\lambda_* \in [\lambda-2\e^{2\tilde{\beta}}, \lambda]}E(\lambda_*)\right) \ d\lambda \nonumber \\
		\leq & c_{\ref{7c2}} \e^{\tilde{\beta}-\beta}|n\log{\e}|^{\frac{2n-1}{2}}\left(1+\sup_{\lambda_* \in [2\e^{2\tilde{\beta}}, t]}E(\lambda_*)\right) , \label{7e2}
	\end{align}
	where $c_{\cc \label{7c2}}$ depends only on $n$, $W$, and $\beta$. 
	
	We next estimate on $B_{\frac14}(y) \backslash B_{8|n\log{\e}|^{\frac12}\e^{\tilde{\beta}}}(y)$. Set $\xi_\e :=  \left(\frac{\e |\nabla \vhi|^2}{2} -\frac{W(\vhi)}{\e}\right)_+$. Using Lemma \ref{l6}, we compute
	\begin{align}
	&\int^t_{4\e^{2\tilde{\beta}}} \frac{1}{2(s-\lambda)} \int_{B_{\frac14}(y) \backslash B_{8|n\log{\e}|^{\frac12}\e^{\tilde{\beta}}}(y)} \left(\frac{\e |\nabla \vhi|^2}{2} -\frac{W(\vhi)}{\e}\right)_+ \tilde{\rho}_{(y,s)}(x,\lambda) \ dxd\lambda \nonumber\\
	\leq& \int^t_{4\e^{2\tilde{\beta}}} \frac{1}{2(\sqrt{4\pi})^{n-1} (s-\lambda)^{\frac{n+1}{2}}} \ d\lambda
	\int^1_0 dl \int _{B_{\frac{1}{4}}(y) \cap \{x ; e^{-\frac{|x-y|^2}{4(s-\lambda)}} \geq l\} \backslash B_{8|n\log{\e}|^{\frac12}\e^{\tilde{\beta}}}(y)} \xi_\e \ dx \nonumber \\
	\leq& \int^t_{4\e^{2\tilde{\beta}}}\frac{1}{2(\sqrt{4\pi})^{n-1} (s-\lambda)^{\frac{n+1}{2}}} d\lambda \int_{e^{-\frac{1}{64(s-\lambda)}}}^{e^{-\frac{16|n\log{\e}|\e^{2\tilde{\beta}}}{(s-\lambda)}}} dl
	\int_{B_{2\sqrt{ (s-\lambda) \log{l^{-1}}}}(y)} \xi_\e \ dx \nonumber\\
	\leq& c_{\ref{7c3}}\e^{\tilde{\beta} -\beta} |n\log{\e}|^{\frac{n}{2}}
	\left(1+\sup_{\lambda_* \in [2\e^{2\tilde{\beta}}, t]}E(\lambda_*)\right) 
	\nonumber\\
	& \int^t_{4\e^{2\tilde{\beta}}} (s-\lambda)^{-1} \int_{e^{-\frac{1}{64(s-\lambda)}}}^{e^{-\frac{16|n\log{\e}|\e^{2\tilde{\beta}}}{(s-\lambda)}}} (\log{l^{-1}})^{\frac{n-1}{2}} dld\lambda, \label{7e3}
	\end{align}
	where $c_{\cc \label{7c3}}$ depends only on $n$, $W$, and $\beta$. 
	By the change of variable, $t \leq s-2\e^{2\tilde{\beta}}$, and $t \leq T $, we have
	\begin{align}
	&\int^t_{4\e^{2\tilde{\beta}}} (s-\lambda)^{-1} \int_{e^{-\frac{1}{64(s-\lambda)}}}^{e^{-\frac{16|n\log{\e}|\e^{2\tilde{\beta}}}{(s-\lambda)}}} (\log{l^{-1}})^{\frac{n-1}{2}} dl d \lambda
	\nonumber\\
	\leq&\int^t_{4\e^{2\tilde{\beta}}} (s-\lambda)^{-1} \int^{\frac{1}{64(s-\lambda)}}_{\frac{16|n\log{\e}|\e^{2\tilde{\beta}}}{(s-\lambda)}}
	\tilde{l}^{\frac{n-1}{2}} e^{-\tilde{l}} \ d\tilde{l}d \lambda \nonumber\\
	\leq&c_{\ref{7c4}} \int^t_{4\e^{2\tilde{\beta}}} (s-\lambda)^{-1} 
	\int^{\frac{1}{64(s-\lambda)}}_{\frac{16|n\log{\e}|\e^{2\tilde{\beta}}}{(s-\lambda)}}
	 e^{-\frac{\tilde{l}}{2}} \ d\tilde{l}d \lambda \nonumber\\
	 \leq& 2 c_{\ref{7c4}} \log{\left(\frac{s-4\e^{2\tilde{\beta}}+(t-t)}{s-t}\right)} \nonumber\\
	  \leq & 2 c_{\ref{7c4}} \log{\left(\frac{T}{2\e^{2\tilde{\beta}}}\right)}, \label{7e4}
	\end{align}
	where $c_{\cc \label{7c4}}$ depends only on $n$. By \eqref{7e3} and \eqref{7e4},
	there exists a constant $c_{\cc \label{7c5}} = c_{\ref{7c5}}(n,W,\beta,T)$ such that
	\begin{align}
	&\int^t_{4\e^{2\tilde{\beta}}} \frac{1}{2(s-\lambda)} \int_{B_{\frac14}(y) \backslash B_{8|n\log{\e}|^{\frac12}\e^{\tilde{\beta}}}(y)} \left(\frac{\e |\nabla \vhi|^2}{2} -\frac{W(\vhi)}{\e}\right)_+ \tilde{\rho}_{(y,s)}(x,\lambda) \ dxd\lambda \nonumber\\
	\leq& c_{\ref{7c5}}\e^{\tilde{\beta} -\beta} |n\log{\e}|^{\frac{n+2}{2}}
	\left(1+\sup_{\lambda_* \in [2\e^{2\tilde{\beta}}, t]}E(\lambda_*)\right) \label{7e5}
	\end{align}
	This lemma follows by \eqref{7e1}, \eqref{7e2}, and \eqref{7e5}.
\end{proof}
To proceed, we need the following theorem (see \cite[Theorem 5.12.4]{WP72}).
\begin{theo}
	Let $\mu$ be a positive Radon measure on $\mathbb{R}^n$ satisfying
	\[
	K(\mu):=\sup_{B_r(x)\subset\R^n} \frac{1}{r^{n-1}} \mu(B_r(x))< \infty.
	\]
	Then there exists a constant $c(n)$ such that
	\[
	\left|\int_{\mathbb{R}^n} \phi  \,d\mu \right| \leq c(n) K(\mu) \int_{\mathbb{R}^n} |\nabla \phi| \,d\mathcal{L}^n
	\]
	for all $\phi \in C^1_c(\mathbb{R}^n)$. \label{4MZ}
\end{theo}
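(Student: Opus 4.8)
The plan is to deduce the estimate from the relative isoperimetric inequality applied to the superlevel sets of $\phi$, rather than from a Riesz‑potential bound (which genuinely fails here: $\int_{\R^n}|x-y|^{1-n}\,d\mu(x)$ can be $+\infty$ even when $K(\mu)<\infty$, e.g.\ for $\mu=\mathcal{H}^{n-1}$ restricted to a hyperplane through $y$). First I would reduce to the case of a nonnegative Lipschitz $\phi$ with compact support: for general $\phi\in C^1_c(\R^n)$ one has $\left|\int\phi\,d\mu\right|\le\int|\phi|\,d\mu$, and $|\phi|$ is Lipschitz with $|\nabla|\phi||=|\nabla\phi|$ a.e. For such a $\phi$ the layer‑cake formula gives $\int_{\R^n}\phi\,d\mu=\int_0^\infty\mu(\{\phi>t\})\,dt$ and the coarea formula gives $\int_{\R^n}|\nabla\phi|\,d\mathcal{L}^n=\int_0^\infty\mathcal{H}^{n-1}(\{\phi=t\})\,dt$. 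By Sard's theorem, for a.e.\ $t>0$ the set $U_t:=\{\phi>t\}$ is a bounded open set of finite perimeter whose topological boundary is the smooth hypersurface $\{\phi=t\}$, so that $\mathcal{H}^{n-1}(\partial U_t)=\mathcal{H}^{n-1}(\{\phi=t\})$. Thus it suffices to prove the pointwise‑in‑$t$ estimate
\begin{align*}
\mu(U)\le c(n)\,K(\mu)\,\mathcal{H}^{n-1}(\partial U)
\end{align*}
for every bounded open set $U\subset\R^n$ of finite perimeter, and then integrate in $t$.

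To prove this, fix such a $U$. For each $x\in U$ the map $r\mapsto\mathcal{L}^n(B_r(x)\cap U)-\tfrac12\omega_n r^n$ is continuous, positive for small $r$ (since $x$ is interior) and negative for large $r$ (since $\mathcal{L}^n(U)<\infty$); let $r(x)>0$ be the largest $r$ at which it vanishes, so that $\mathcal{L}^n(B_{r(x)}(x)\cap U)=\mathcal{L}^n(B_{r(x)}(x)\setminus U)=\tfrac12\omega_n r(x)^n$ and $r(x)\le(2\mathcal{L}^n(U)/\omega_n)^{1/n}$. The relative isoperimetric inequality in the ball $B_{r(x)}(x)$ then gives $(\tfrac12\omega_n r(x)^n)^{(n-1)/n}\le c(n)\,\mathcal{H}^{n-1}(\partial U\cap B_{r(x)}(x))$, that is $r(x)^{n-1}\le c(n)\,\mathcal{H}^{n-1}(\partial U\cap B_{r(x)}(x))$, whence by the density bound $\mu(B_{r(x)}(x))\le K(\mu)\,r(x)^{n-1}\le c(n)\,K(\mu)\,\mathcal{H}^{n-1}(\partial U\cap B_{r(x)}(x))$. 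The family $\{B_{r(x)}(x):x\in U\}$ covers $U$ and has uniformly bounded radii, so Vitali's covering lemma (the $5r$ version) produces a countable pairwise disjoint subfamily $\{B_{r(x_i)}(x_i)\}_i$ with $U\subset\bigcup_i B_{5r(x_i)}(x_i)$, and therefore
\begin{align*}
\mu(U)\le\sum_i\mu(B_{5r(x_i)}(x_i))\le 5^{n-1}K(\mu)\sum_i r(x_i)^{n-1}\le c(n)\,K(\mu)\sum_i\mathcal{H}^{n-1}(\partial U\cap B_{r(x_i)}(x_i))\le c(n)\,K(\mu)\,\mathcal{H}^{n-1}(\partial U),
\end{align*}
the last step using that the $B_{r(x_i)}(x_i)$ are disjoint. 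Integrating this over $U=U_t$ in $t$ and undoing the two reductions proves the theorem with a constant depending only on $n$.

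I do not expect a serious obstacle in this argument. The points that need care are the standard ones: justifying the layer‑cake and coarea identities for a merely Lipschitz $\phi$ and discarding the Lebesgue‑null set of non‑regular values $t$ (both handled by Sard's theorem and the coarea formula), and making sure every constant stays dimensional. The latter is the reason the argument is organised around the scale $r(x)$ at which $B_{r(x)}(x)$ is exactly half‑filled by $U$ — so that the relative isoperimetric inequality applies with its absolute constant — rather than, say, the inradius of $U$ at $x$, which would let the geometry of $U$ leak into the constant.
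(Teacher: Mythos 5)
The paper itself supplies no proof here: Theorem \ref{4MZ} is stated with a bare citation to Ziemer's book, so there is no "paper's argument" to compare against. Your proof is a correct, self-contained derivation, and it is in fact the classical route (coarea $+$ relative isoperimetric inequality $+$ Vitali covering), essentially Gustin's boxing inequality, which is also the skeleton of the Meyers--Ziemer argument behind the cited Theorem~5.12.4. Your leading remark that the Riesz-potential approach fails (with the $\mathcal{H}^{n-1}\lfloor$hyperplane counterexample producing a logarithmically divergent potential) is accurate and is exactly why this result is nontrivial despite its innocent appearance.

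One technical point should be tightened: after passing to $|\phi|$, you invoke Sard's theorem, but $|\phi|$ is only Lipschitz, and in any case Sard for real-valued maps of $\mathbb{R}^n$ requires $C^n$, not $C^1$, when $n\ge 2$. You do not actually need Sard at all. Two clean fixes: (i) keep $\phi\in C^1_c$, split $\int\phi\,d\mu=\int_0^\infty\mu(\{\phi>t\})\,dt-\int_0^\infty\mu(\{\phi<-t\})\,dt$, and estimate each term separately, so you never differentiate $|\phi|$; and (ii) replace Sard by the BV coarea formula: since $\phi$ (or $|\phi|$) is Lipschitz with compact support, it is BV, so $\{\phi>t\}$ is a bounded open set of finite perimeter for a.e.\ $t$ and $\int_0^\infty P(\{\phi>t\})\,dt=\int|\nabla\phi|\,d\mathcal{L}^n$. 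Your half-filling radius $r(x)$ and the Vitali $5r$ covering then deliver $\mu(U)\le c(n)K(\mu)P(U)$ with the reduced boundary $\partial^*U$ in place of $\partial U$ (which is what the relative isoperimetric inequality actually controls; this is no loss, since $\partial^*U\subset\partial U$ for open $U$ and $P(U)$ is what the coarea formula integrates). With that substitution the proof is airtight, and every constant is dimensional exactly as you intended by organising the argument around the equal-volume scale.
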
 
\begin{lemm} \label{ll9}
	Given $s \in(0,1)$ and $s' \in (0,1-s)$. Let $\mu$ be a positive Radon measure on $U_1$
	satisfying
	\begin{align*}
	K(\mu,s,s'):= \sup_{x \in B_s,r\leq s'} \frac{1}{r^{n-1}} \mu(B_r(x))< \infty.
	\end{align*}
	Then there exists a constant $c_{\cc \label{9cc1}}=c_{\ref{9cc1}}(n,s,s')$ such that
	\begin{align*}
	\sup_{B_r(x)\subset \R^n}  \frac{1}{r^{n-1}} \mu(B_s \cap B_r(x)) \leq c_{\ref{9cc1}}
	K(\mu,s,s').
	\end{align*}
\end{lemm}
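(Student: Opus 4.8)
The plan is to bootstrap the global density bound from the local hypothesis $K(\mu,s,s')<\infty$ by a doubling argument together with a case distinction on the size of the radius $r$. The issue is that $K(\mu,s,s')$ controls $\mu(B_r(w))$ only when the \emph{center} $w$ lies in $B_s$ and $r\le s'$, so an arbitrary ball $B_r(x)$ first has to be replaced by one centered inside $B_s$.

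Fix $x\in\R^n$ and $r>0$. If $B_s\cap B_r(x)=\emptyset$ there is nothing to prove. Otherwise pick $z\in B_s\cap B_r(x)$, so that $B_r(x)\subset B_{2r}(z)$ with $z\in B_s$. If $2r\le s'$, the definition of $K(\mu,s,s')$ applies to $B_{2r}(z)$ and gives
\[
\frac{\mu(B_s\cap B_r(x))}{r^{n-1}}\le\frac{\mu(B_{2r}(z))}{r^{n-1}}\le 2^{n-1}K(\mu,s,s').
\]
If instead $2r>s'$, I bound crudely by $\mu(B_s\cap B_r(x))\le\mu(B_s)$ and reduce to proving $\mu(B_s)\le N(s')^{n-1}K(\mu,s,s')$ for a constant $N=N(n,s,s')$, since then $\mu(B_s\cap B_r(x))\le N(s')^{n-1}K(\mu,s,s')\le N\,2^{n-1}\,r^{n-1}K(\mu,s,s')$.

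For the bound on $\mu(B_s)$, put $\rho:=\tfrac12\min\{s,s'\}$ and let $\{w_1,\dots,w_N\}\subset B_s$ be a maximal $\rho$-separated set. By maximality the balls $B_\rho(w_i)$ cover $B_s$, while the balls $B_{\rho/2}(w_i)$ are pairwise disjoint and lie in $B_{s+\rho/2}(0)$; comparing volumes yields $N\le C(n)(s/\rho)^n$, so $N$ depends only on $n,s,s'$. Since $\rho\le s'$ and $w_i\in B_s$, each $B_\rho(w_i)$ is admissible for $K(\mu,s,s')$, hence
\[
\mu(B_s)\le\sum_{i=1}^N\mu(B_\rho(w_i))\le N\rho^{n-1}K(\mu,s,s')\le N(s')^{n-1}K(\mu,s,s').
\]
Putting the two cases together proves the lemma with $c_{\ref{9cc1}}=N\,2^{n-1}$. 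There is no serious obstacle here; the only point requiring a little care is keeping the covering balls both centered in $B_s$ and of radius at most $\min\{s,s'\}$ so that the hypothesis genuinely applies (and, incidentally, so that all the balls stay inside $U_1$, which uses $s+s'<1$), and this is precisely what the choice $\rho=\tfrac12\min\{s,s'\}$ arranges.
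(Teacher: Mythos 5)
Your proof is correct and follows essentially the same two-case strategy as the paper: a direct recentering argument for small $r$ and a finite covering of $B_s$ by admissible balls for large $r$. The only cosmetic difference is that in the small-radius case the paper recenters by radially projecting $x$ onto $B_s$ (which preserves $B_s\cap B_r(x)$ without enlarging the radius, avoiding your $2^{n-1}$ factor), whereas you use the doubling $B_r(x)\subset B_{2r}(z)$; both are fine.
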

\begin{proof}
	For any $B_r(x)\subset \R^n$, if $r \leq s'$, there exists a set $B_r(y) \subset \R^n$ such that
	$y \in B_s$ and $B_s \cap B_r(x) \subset B_s \cap B_r(y)$. Hence, we have
	\begin{align} \label{ll9e1}
	\frac{1}{r^{n-1}} \mu(B_s \cap B_r(x)) \leq\frac{1}{r^{n-1}} \mu(B_s \cap B_r(y)) 
	\leq K(\mu,s,s').
	\end{align}
	Next, we consider the case of $r>s'$.
	 Let $\{B_{s'} (y_i)\}_{i=1}^{m}$ be sets such that $y_i \in B_s$ and $B_s \subset 
	\cup_{i=1}^m  B_{s'} (y_i)$. Here, $m$ depends only on $n$, $s$, and $s'$.
	Then we obtain
	\begin{align}\label{ll9e2}
	\frac{1}{r^{n-1}} \mu(B_s \cap B_r(x))
	\leq \sum_{i=1}^{m} \frac{1}{r^{n-1}} \mu(B_s'(y_i)) \leq m K(\mu,s,s').
	\end{align}
	This lemma follows by \eqref{ll9e1} and \eqref{ll9e2}.
\end{proof}
\begin{lemm}\label{l8} 
	There exists a constant $c_{\cc \label{8c1}}=c_{\ref{8c1}}(n,p,q)$ such that for any
		$t_0$, $t_1$ with $0\leq t_0<t_1<s$, we have
		\begin{align}
		\int^{t_1}_{t_0} \int_{B_{\frac14}} \tilde{\rho}_{(0,s)}|u|^2 \ d\mu dt \leq 
		c_{\ref{8c1}} (t_1-t_0)^{\hat{p}} \|u\|^2_{L^q([t_0,t_1];(W^{1,p}(\Omega))^n)} \sup_{t \in [t_0,t_1]} E(t),
		\end{align}
	where $(1)$ $0<\hat{p}=\frac{2pq-2p-nq}{pq}$ when $p<n$, $(2)$ $\hat{p}<\frac{q-2}{q}$
	may be taken arbitrarily close to $\frac{q-2}{q}$ when $p=n$ (and $c_{\ref{8c1}}$ depends
	on $\hat{p}$), and $(3)$ $\hat{p}<=\frac{q-2}{q}$ when $p>n$.
\end{lemm}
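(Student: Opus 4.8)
The plan is to bound the weighted spatial integral $\int_{B_{1/4}}\tilde\rho_{(0,s)}|u|^2\,d\mu_t$ at a fixed time by a H\"older-type inequality against the measure $\mu_t$, using the density bound $E(t)$ together with Theorem~\ref{4MZ} (the Meyers--Ziemer inequality), and then integrate the resulting time-dependent factor $\|u(\cdot,t)\|_{W^{1,p}}^2$ in $t$ over $[t_0,t_1]$ via H\"older in time with exponent $q/2$. First I would fix $t$ and note that $\tilde\rho_{(0,s)}|u|^2$ is (up to the cutoff $\eta$) a $C^1_c$ function times $|u|^2$; since $s>t_1\ge t$ the heat kernel weight $\tilde\rho_{(0,s)}(\cdot,t)$ is smooth and bounded, so the integrand against $\mu_t$ can be controlled by $\|{\cdot}\|_{W^{1,1}(\Omega)}$ of $\tilde\rho\,|u|^2$ after applying Theorem~\ref{4MZ} to $\mu_t$, whose Meyers--Ziemer constant $K$ is comparable to $E(t)$ by Lemma~\ref{ll9} (localizing from $B_{1/4}$ to all of $\R^n$, so the hypothesis $\sup_{B_r(x)\subset\R^n}$ in Theorem~\ref{4MZ} is met).

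Next I would carry out the dimensional bookkeeping that produces the exponent $\hat p$. The key point is to interpolate: one wants to estimate $\int |u|^2\,d\mu_t$ where $u\in W^{1,p}(\Omega)$. By Sobolev embedding $W^{1,p}\hookrightarrow L^{p^*}$ with $p^*=\frac{np}{n-p}$ when $p<n$; then $\int|u|^2\,d\mu_t$ is split via H\"older on the measure $\mu_t$ between an $L^{p^*/2}(\mu_t)$-norm of $|u|^2$ and the total mass of $\mu_t$ on the relevant ball, where $\mu_t(B_r)\le E(t)r^{n-1}$ controls the mass. Combining the scaling of the heat kernel (its spatial integral over $B_{1/4}$ against $\mathcal L^n$ scales like $(s-\lambda)^{1/2}$, but here we integrate against $\mu_t$) with these embeddings and then integrating over $t\in[t_0,t_1]$ using H\"older with exponents $(q/2, q/(q-2))$ yields the factor $(t_1-t_0)^{(q-2)/q}$ from the trivial time factor, which then gets reduced to $(t_1-t_0)^{\hat p}$ with $\hat p = \frac{2pq-2p-nq}{pq}$ once the spatial Sobolev loss is accounted for; the positivity of $\hat p$ is exactly the hypothesis $p>\frac{nq}{2(q-1)}$. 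The cases $p=n$ and $p>n$ are handled by replacing $p^*$ by an arbitrary large exponent or by $L^\infty$ (via $W^{1,p}\hookrightarrow C^{1-n/p}$) respectively, which is why $\hat p$ can be taken arbitrarily close to $(q-2)/q$ when $p=n$ and equals $(q-2)/q$ when $p>n$.

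The main obstacle I expect is getting the spatial estimate in the right form so that the density ratio $E(t)$ appears linearly rather than to a higher power, and simultaneously extracting the sharp time exponent. Concretely, one must choose the H\"older split on $\mu_t$ so that the power of $\mu_t(\cdot)$ that survives is exactly $1$ (giving one factor of $E(t)$), while the $L^{p^*}(\mu_t)$ piece of $|u|$ is converted back to an $L^{p^*}(\mathcal L^n)$ piece at the cost of another power of $E(t)$ unless one is careful — the trick is that Theorem~\ref{4MZ} converts an $L^1(\mu_t)$ bound of a gradient quantity into something linear in $K\sim E(t)$, so one applies it to a single well-chosen $C^1_c$ test function built from $\tilde\rho$, $|u|$, and $|\nabla u|$, not to $|u|^2$ naively. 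I would also need to check that the heat-kernel weight does not spoil the $W^{1,1}$ bound: since on $B_{1/4}$ and for $t<s$ fixed one has $\tilde\rho\le C(s-t)^{-(n-1)/2}$ and $|\nabla\tilde\rho|\le C(s-t)^{-1/2}\tilde\rho$, these factors are harmless (absorbed into constants depending on $s-t$, or rather they are part of the $\tilde\rho$ already being integrated and reorganized), so the delicate part is purely the Sobolev-in-space/H\"older-in-time exponent count leading to $\hat p$.
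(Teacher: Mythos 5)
Your proposal matches the approach the paper invokes: the paper's proof is a one-line reference to \cite[Lemma 4.8]{TA16}, preceded by the observation that Lemma~\ref{ll9} upgrades the local density bound $E(t)$ to the global hypothesis of Theorem~\ref{4MZ}, and that argument is exactly what you reconstruct — apply Meyers--Ziemer at each fixed time to the test function $\tilde\rho_{(0,s)}|u|^2$, estimate $\int|\nabla(\tilde\rho_{(0,s)}|u|^2)|\,dx$ by Sobolev embedding and H\"older in space to get a factor $(s-\lambda)^{1-n/p}\|u(\cdot,\lambda)\|_{W^{1,p}}^2$, and finish with H\"older in time at exponent $q/2$, yielding the stated $\hat p$. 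Your middle paragraph's ``H\"older directly on $\mu_t$'' detour would cost an extra power of $E(t)$, but you correctly identify and discard it in the final paragraph, so the plan is sound.
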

\begin{proof}
	By $\mu(B_{\frac14}) \leq E(t)$ and Lemma \ref{ll9}, we can prove this lemma as well as
	\cite[Lemma 4.8]{TA16}.
\end{proof}
\begin{theo}\label{l9}
	Set $l(x,t) =  \min \{\dist(x, \partial U_{1-\e}), \sqrt{t-\e^2} \}$. Then, there exists a constant $0<c_{\cc \label{9c1}}=c_{\ref{9c1}}(n,p,q,\beta,W,\Lambda_0, \Lambda_1)$ such that, if $0<\e<1/2$, $t > \e^2$ and $U_{2r}(x)\subset U_{1-\e}$, we have
\begin{align}
\frac{l^{n-1}(x,t)}{r^{n-1}} \mu_{t} (B_r(x))	\leq c_{\ref{9c1}}.	
\end{align}
\end{theo}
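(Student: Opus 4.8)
The plan is to run the almost-monotonicity formula of Theorem~\ref{l4} with a backward heat kernel $\tilde\rho_{(x,s)}$ whose base time $s$ and width are tuned to the weight $l(x,t)$, so that the ``oldest admissible'' Gaussian term, once multiplied by $l(x,t)^{n-1}$, collapses to a dimensional multiple of $M_0$; the weight $l$ thus records precisely how far back in time (down to $t=\e^2$) and how wide a kernel (staying inside $U_{1-\e}$) one may legitimately use, and the remaining terms are absorbed as small errors through Lemmas~\ref{l2}, \ref{l7}, and~\ref{l8}. First I would reduce to $x\in B_{1/4}$: writing $d=\dist(x,\partial U_{1-\e})\ge 2r$, the parabolic rescaling $\vhi(z,\tau)\mapsto\vhi(x+dz,d^2\tau)$, $u\mapsto d\,u(x+dz,d^2\tau)$ turns the problem on $B_d(x)$ into one on $U_1$ with parameter $\e/d<\e$, and since $d<1$ and $\hat{p}=2-\tfrac2q-\tfrac np>0$ the hypotheses \eqref{a2}--\eqref{a5} survive with the same $\beta,\Lambda_0,M_0$, the density ratio and $l$ transforming scale-invariantly. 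The degenerate cases are then immediate: if $r\ge r_0$ for a fixed $r_0$ then $l^{n-1}r^{-(n-1)}\mu_t(B_r(x))\le r_0^{-(n-1)}M_0$ by \eqref{a5}, while if $r\le\e$ or $t\le4\e^{2\tilde{\beta}}$ (so $l\lesssim\e^{\tilde{\beta}}$) Lemma~\ref{l1}, used together with $\mu_t\le M_0$ when $r$ is not tiny, gives the bound; so we may assume $\e<r<r_0$ and $t>4\e^{2\tilde{\beta}}$.

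Now set $s:=t+r^2$ and $\tilde t:=\max\{\e^2,\,t-c_* l(x,t)^2\}$ with a small fixed $c_*<\tfrac14$; one checks $\tilde t=t-c_*l^2>\e^2$, $t-\tilde t\le c_* l^2$, and $\tilde t\ge 2\e^{2\tilde{\beta}}$ in the present range. Integrating Theorem~\ref{l4} (with $y=x$) over $\lambda\in[\tilde t,t]$,
\begin{align*}
\int \tilde\rho_{(x,s)}(\cdot,t)\,d\mu_t \le \int \tilde\rho_{(x,s)}(\cdot,\tilde t)\,d\mu_{\tilde t} + I + II + III,
\end{align*}
where $I=\tfrac12\int_{\tilde t}^{t}\!\int\tilde\rho_{(x,s)}|u|^2\,d\mu_\lambda\,d\lambda$, $II=\int_{\tilde t}^{t}\tfrac1{2(s-\lambda)}\int_{B_{1/4}(x)}\big(\tfrac{\e|\nabla\vhi|^2}{2}-\tfrac{W(\vhi)}{\e}\big)\tilde\rho_{(x,s)}\,dx\,d\lambda$, and $III=\int_{\tilde t}^{t}c_{\ref{4c1}}e^{-1/(128(s-\lambda))}\mu_\lambda(B_{1/4}(x))\,d\lambda$. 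Since $r<\tfrac18$ one has $\mu_t(B_r(x))r^{-(n-1)}\le c(n)\int\tilde\rho_{(x,s)}(\cdot,t)\,d\mu_t$, so it suffices to bound $l(x,t)^{n-1}$ times each term on the right. For the leading term, $\tilde\rho_{(x,s)}(\cdot,\tilde t)\le(4\pi(s-\tilde t))^{-(n-1)/2}$ and \eqref{a5} give $\int\tilde\rho_{(x,s)}(\cdot,\tilde t)\,d\mu_{\tilde t}\le M_0(4\pi(s-\tilde t))^{-(n-1)/2}$, and as $s-\tilde t\ge c_*l^2$ its $l^{n-1}$-multiple is $\le (4\pi c_*)^{-(n-1)/2}M_0$; likewise $III\le c_{\ref{4c1}}M_0(t-\tilde t)\le c_{\ref{4c1}}M_0 c_*l^2$, so its $l^{n-1}$-multiple is $\le c_{\ref{4c1}}M_0 c_*$. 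These two terms furnish the final constant and are the only place the weight $l$ is spent.

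For the errors, put $\mathcal{E}:=\sup\{\,l(z,\sigma)^{n-1}\rho^{-(n-1)}\mu_\sigma(B_\rho(z)):\e^2<\sigma\le T,\ U_{2\rho}(z)\subset U_{1-\e}\}$, finite for each fixed $\e$ by Lemma~\ref{l1}. Because $\tilde t-\e^2\ge(1-c_*)(t-\e^2)\ge(1-c_*)l(x,t)^2$, for every centre $z\in B_{1/4}$ and every time that actually enters Lemmas~\ref{l6}--\ref{l8} one has $l(z,\cdot)\ge c\min\{1,l(x,t)\}$, hence $\sup_\lambda E(\lambda)\le c(n)\,\mathcal{E}\,l(x,t)^{-(n-1)}$ over exactly those windows, so that the external $l(x,t)^{n-1}$ cancels the weight-degeneration. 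Feeding this into Lemma~\ref{l8} gives $l^{n-1}I\le c\,c_*^{\hat p}\Lambda_0^2\,\mathcal{E}$; splitting $II$ at $4\e^{2\tilde{\beta}}$, estimating the part below it crudely via Lemma~\ref{l2} and $\int\tilde\rho_{(x,s)}(\cdot,\lambda)\,dx\le\sqrt{4\pi(s-\lambda)}$ — contributing $\lesssim\e^{-\beta}\e^{2\tilde{\beta}}/r=\e/r\le1$ since $2\tilde{\beta}-\beta=1$ and $r>\e$ — and the part above it via Lemma~\ref{l7}, gives $l^{n-1}II\le c\,\e^{\tilde{\beta}-\beta}|n\log\e|^{(n+2)/2}(1+\mathcal{E})+c$. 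Altogether $l(x,t)^{n-1}r^{-(n-1)}\mu_t(B_r(x))\le C_0+\Theta(\e)\,\mathcal{E}$ with $C_0=C_0(n,p,q,\beta,W,\Lambda_0,M_0)$ and $\Theta(\e)\to0$ as $\e\to0$; taking the supremum over admissible $(x,t,r)$ and then $\e$ small enough that $\Theta(\e)\le\tfrac12$ yields $\mathcal{E}\le 2C_0$, the range $\e\in[\e_*,\tfrac12)$ being covered directly by Lemma~\ref{l1}, with Lemma~\ref{ll9} mediating between the local and global forms of the density ratio.

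The step I expect to be the genuine obstacle is this absorption. The time-supremum of $E$ occurring in Lemmas~\ref{l7} and~\ref{l8} is, in the worst case, of size $\mathcal{E}\,\e^{-\tilde{\beta}(n-1)}$, which would swamp the small gains $\e^{\tilde{\beta}-\beta}$ and $(t-\tilde t)^{\hat p}$; it is tamed only because $\tilde t$ is taken comparable to $l(x,t)^2$, so that over the windows that actually enter the weight $l(z,\cdot)$ degenerates no worse than $l(x,t)^{-(n-1)}$, exactly matched by the factor $l(x,t)^{n-1}$ produced by the leading term. Making this balance hold simultaneously with the scale thresholds $r>\e$, $t>4\e^{2\tilde{\beta}}$ (needed so that the crudely estimated ``very early'' piece of the discrepancy stays $O(1)$ and Lemma~\ref{l6} is applicable) and with the rescaling reduction of the first step is the technical heart of the argument.
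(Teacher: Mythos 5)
Your proposal follows essentially the same strategy as the paper's proof: define the a priori finite supremum of the weighted density ratio, integrate the almost-monotonicity formula of Theorem~\ref{l4} over a time window of length $\sim l^2$, bound the leading Gaussian and decay terms by $M_0\,l^{1-n}$, control the transport and discrepancy contributions via Lemmas~\ref{l8} and~\ref{l7} with a small fixed window parameter and $\e$ small respectively, and absorb the resulting self-referential bound. The paper instead rescales by $l(x_0,t_0)$ itself, normalizing both space and time to unit scale so the fixed window $[1-\hat t,1]$ can be used directly, rather than rescaling by $\dist$ alone and keeping the time factor $l^2$ explicit as you do; this is a cosmetic difference, and your lumping of the $c_*^{\hat p}\Lambda_0^2$ contribution into ``$\Theta(\e)\to 0$'' is a harmless imprecision since, as you already acknowledge, $c_*$ must be chosen small as a fixed constant independent of $\e$ (in the paper this is the choice of $\hat t$).
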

\begin{proof}
	Define
	\begin{align}
	E_1 := \sup_{U_{2r}(x)\subset U_{1-\e}, t>\e^2}  \frac{l^{n-1}(x,t)}{r^{n-1}} \mu_{t} (B_r(x)).	
	\end{align}
	Note that $E_1$ is bounded for each $\e$ by Lemma \ref{l1}.
	Let $x_0$, $r_0$, and $t_0$ be fixed such that
	\begin{align}
	\frac{l^{n-1}(x_0,t_0)}{r_0^{n-1}} \mu_{t_0} (B_{r_0}(x_0)) > \frac34 E_1.	\label{9e1}
	\end{align}
	Set $l=l(x_0,t_0)$, $\tilde{x} = (x-x_0) / l$, $\tilde{r}=r/l$, $\tilde{t}= (t-t_0)/l^2 +1$, $\tilde{\e} = \e/l$, $\tilde{\vhi}(\tilde{x},\tilde{t}) = \vhi(x,t)$ and $\tilde{u}(\tilde{x},\tilde{t}) = lu(x,t)$. 
	We consider on $\tilde{t} \in [0,1]$. 	Note that $U_{l+\e}(x_0)\subset U_1$. We have
	\begin{align}
	\partial_t \tilde{\vhi} + \tilde{u}\cdot \nabla \tilde{\vhi} = \Delta \tilde{\vhi}- \frac{W'(\tilde{\vhi} )}{\tilde{\e}^2 } \quad \mathrm{for } \ (\tilde{x}, \tilde{t})
	\in U_{1+\tilde{\e}}\times [0,1], \label{9e2} \\
	\|\tilde{u}\|_{L^q([0,1];(W^{1,p}(U_{1+\tilde{\e}}))^n)} \leq l^{2-\frac{n}{p}-\frac{2}{q}}  \Lambda_0, \label{9e2a}\\
	\int_{U_{1+\tilde{\e}}} \left(\frac{\tilde{\e} |\nabla \tilde{\vhi}|^2 }{2}
	+ \frac{W(\tilde{\vhi})}{\tilde{\e}} \right) \ d \tilde{x} \leq l^{1-n}  \Lambda_1, \label{9e3}\\
	\frac{\tilde{\mu}_{\tilde{t}}(B_{\tilde{r}}(\tilde{x})) }{\tilde{r}^{n-1}}	=
	\frac{1}{\tilde{r}^{n-1}}	\int_{B_{\tilde{r}}(\tilde{x})} \left(\frac{\tilde{\e} |\nabla \tilde{\vhi}|^2 }{2}
	+ \frac{W(\tilde{\vhi})}{\tilde{\e}} \right)\ d \tilde{x}  = 
	\frac{\mu_t(B_r(x))}{r^{n-1}} , \label{9e4}
	\end{align}
	for $B_{\tilde{r}}(\tilde{x})  \subset U_{1+\tilde{\e}}$.
	If $\tilde{r}_0 = \frac{r_0}{l} \geq \frac14$, we have
	\begin{align}
		\frac{l^{n-1}}{r_0^{n-1}} \mu_{t_0} (B_{r_0}(x_0))	 \leq 4^{n-1} \Lambda_1. \label{9e5}
	\end{align}
	By \eqref{9e1} and \eqref{9e5}, we obtain
	\begin{align}
	E_1 \leq \frac{4^n}{3} \Lambda_1.\label{9e1f}
	\end{align}
	We only need to consider $\tilde{r}_0 \leq \frac14$.
	For any $x\in B_{l/4}(x_0)$, we have ${\rm dist}\,(x,\partial U_{1-\e})\geq3 l/4 > l/2$. Hence
	for any $x\in B_{l/4}(x_0)$ and $(t_0+3\e^2)/4<t<t_0$, we have 
	\begin{align*}
	\frac{l^{n-1}(x,t)}{r^{n-1}} \mu_{t} (B_r(x)) \leq E_1
	\end{align*}
by the definition of $E_1$ and $r<l/4\leq {\rm dist}\,(x,\partial U_{1-\e})/2$.
	Using ${\rm dist}(x,\partial U_{1-\e})\geq l/2$ and $\sqrt{t-\e^2}\geq l/2$, this gives
	\begin{align}
	\tilde{E} := \sup_{\tilde{x}\in B_{\frac{1}{4}},\, 0<\tilde{r}<\frac{1}{4}, \frac12<\tilde{t}\leq 1}\frac{\tilde{\mu}_{\tilde{t}}(B_{\tilde{r}}(\tilde{x})) }{\tilde{r}^{n-1}} \leq 2^{n-1}l^{1-n}E_1. \label{9e6}
	\end{align}
	Set
	\begin{align}
	\hat{\e}:= \min \{\e_{\ref{2E1}}, \e_{\ref{5E1}}, \e_{\ref{6E1}},\check{\e}, \frac12 \}, \label{9e6a} \\
	\hat{t} := \min \{\frac14,(2^n(\sqrt{4\pi})^{n-1} e^{\frac{1}{16}}  c_{\ref{8c1}} \Lambda_0^2)^{-\frac{1}{\hat{p}}} \}. \label{9e6b}
	\end{align}
	Here, $\check{\e}$ satisfies that if $0<\delta < \check{\e}$, then
	\begin{align*}
	 c_{\ref{7c1}} \delta^{\tilde{\beta}-\beta}|n\log{\delta}|^{\frac{n+2}{2}}
	 \leq \frac{1}{4(2\sqrt{4\pi})^{n-1} e^{\frac{1}{16}}}.
	\end{align*}
	If $1/2 < \tilde{\e}$, by Lemma \ref{l1} and \eqref{9e1}, we have
	\begin{align*}
	\frac34 E_1 <l^{n-1} \frac{\tilde{\mu}_1 (B_{\tilde{r}_0})}{\tilde{r}^{n-1}_0} 
	\leq \omega_n l^{n-1} \tilde{r}_0 \left(\tilde{\e} c_{\ref{1c1}}^2 + 2 \sup_{|x|\leq 1} W(x)  \right)
	\end{align*}
	and since $l\tilde\e=\e\leq \frac12$, $l<1$ and $\tilde r_0\leq 1/4$, we obtain
	\begin{align}
	E_1 < \frac43 \omega_n \tilde{r}_0 \left(c_{\ref{1c1}}^2 + 2 \sup_{|x|\leq 1} W(x)  \right). \label{9e2f}
	\end{align}
	If $\hat{\e} \leq \tilde{\e} \leq 1/2$, again by Lemma \ref{l1}, \eqref{9e1}, and 
	$\tilde{r}_0 \leq 1/4$, we have
	\begin{align*}
	\frac{3}{4l^{n-1}} E_1 <& \frac{\tilde{\mu}_1 (B_{\tilde{r}_0})}{\tilde{r}^{n-1}_0} \\
	\leq& \frac{\omega_n \tilde{r}_0}{\tilde{\e}} \left(c_{\ref{1c1}}^2 + 2 \sup_{|x|\leq 1} W(x)  \right)\\
	\leq &  \frac{\omega_n}{4\hat{\e}} \left(c_{\ref{1c1}}^2 + 2 \sup_{|x|\leq 1} W(x)  \right)
	\end{align*}
	and we obtain
	\begin{align}
	E_1 \leq \frac{ \omega_n}{3\hat{\e}} \left(c_{\ref{1c1}}^2 + 2 \sup_{|x|\leq 1} W(x)  \right).
	\label{9e3f}
	\end{align}
	If $\tilde{\e}<\hat{\e}$, integrate \eqref{4e1} for $[1-\hat{t},1]$. 
	Set $s =1 +\tilde{r}_0^2$ and $\tilde{\rho} = \tilde{\rho}_{(0,s)}$.
	we have
	\begin{align}
	&\int_{B_{\frac14}} \tilde{\rho} \ d \tilde{\mu}_1 \nonumber\\ 
	\leq&\int_{B_{\frac14}} \tilde{\rho} \ d \tilde{\mu}_{1-\hat{t}} +
	\frac12 \int^1_{1-\hat{t}} \int_{B_{\frac14}} \tilde{\rho} |u|^2 \ d \tilde{\mu}_\lambda \ d\lambda \nonumber \\
	&+  \int^1_{1-\hat{t}} \frac{1}{2(s-\lambda)} \int_{B_{\frac14}} 
	\left(\frac{\tilde{\e} |\nabla \tilde{\vhi}|^2}{2} -\frac{W(\tilde{\vhi})}{\tilde{\e}}\right)_+
	\tilde{\rho}\ d \tilde{x} d\lambda \nonumber \\
	&+ c_{\ref{4c1}}  \int^1_{1-\hat{t}} \tilde{\mu}_{\lambda}(B_{\frac14}) \  d\lambda \nonumber \\
	=:& I_1+I_2+I_3+I_4. \label{9e7}
	\end{align}
	By \eqref{9e3}, $I_1$ gives
	\begin{align} 
	I_1 \leq \int_{B_{\frac14}} \frac{1}{(\sqrt{4\pi})^{n-1}  (\tilde{r}_0^2 +\hat{t})^{\frac{n-1}{2}}} \ d\tilde{\mu}_{1-\hat{t}} 
	 \leq \frac{\Lambda_1 l^{1-n}}{ (\sqrt{4\pi \hat{t}})^{n-1} } \label{9e8}
	\end{align}
We next consider the term of $I_2$. By \eqref{MTe1}, we have $2-\frac{n}{p}-\frac{2}{q}>0$.
Hence, we obtain
\begin{align} \label{9e7a}
\|\tilde{u}\|_{L^q([0,1];(W^{1,p}(U_{1+\tilde{\e}}))^n)} \leq  \Lambda_0
\end{align}
by $l \leq 1$ and \eqref{9e2a}.
	Using Lemma \ref{l8}, \eqref{9e6}, \eqref{9e6b}, and \eqref{9e7a}, $I_2$ gives
	\begin{align}
	I_2 \leq  \frac{c_{\ref{8c1}}}{2} \hat{t}^{\hat{p}} \Lambda_0^2 \tilde{E} 
	\leq \frac{1}{4(\sqrt{4\pi})^{n-1} e^{\frac{1}{16}}} l^{1-n} E_1. \label{9e9}
	\end{align}
	Using Lemma \ref{l7}, \eqref{9e6}, and \eqref{9e6a}, $I_3$ gives
	\begin{align}
	I_3 \leq \frac{1}{4(2\sqrt{4\pi})^{n-1} e^{\frac{1}{16}}}\left(1+ \tilde{E}\right) \leq \frac{1}{4(\sqrt{4\pi})^{n-1} e^{\frac{1}{16}}}(1 + l^{1-n} E_1). \label{9e10}
	\end{align}
	By \eqref{9e3}, $I_4$ gives
	\begin{align}
	I_4 \leq c_{\ref{4c1}}  \hat{t} \Lambda_1 l^{1-n}.
	\end{align}
	By \eqref{9e1}, we have
	\begin{align}
	\frac{3l^{1-n}}{4} E_1 \leq  \frac{\mu_{t_0} (B_{r_0}(x_0) )}{r_0^{n-1}}
	 = \frac{\tilde{\mu}_1(B_{\tilde{r}})}{\tilde{r}^{n-1}_0}
	 \leq e^{\frac{1}{16}}(\sqrt{4\pi})^{n-1} \int_{B_{\frac14}} \tilde{\rho} \ d \tilde{\mu}_1.
	 \label{9e11}
	\end{align}
	By \eqref{9e7}--\eqref{9e11}, there exists a constant
	 $c_{\cc \label{9c2}}= c_{\ref{9c2}}(n,p,q,\beta,W,\Lambda_0, \Lambda_1)$ such that
	 \begin{align}
	 E_1 \leq c_{\ref{9c2}}, \label{9e4f}
	 \end{align}
	and this theorem follows by \eqref{9e1f}, \eqref{9e2f}, \eqref{9e3f}, and \eqref{9e4f}.
\end{proof}
We redefine $\Omega$ as a general bounded domain with smooth boundary.
\begin{proof}[Proof of Theorem \ref{MT1}]
	Since $\bar{\Omega}' \subset \Omega$ is compact, there exist a constant $\tilde{r}>0$ and sets $\{U(x_i,r_i)=\{ |x-x_i|<r_i\}\}_{i=0}^N$ such that for any $y\in \Omega'$, there exists a set 
	$U(x_i,r_i)$ such that 
	$B_{\tilde{r}}(y) \subset U(x_i,r_i)$ and
	$\bar{\Omega}' \subset \subset \cup_{1\leq i\leq N} U(x_i,r_i) \subset \subset \Omega$.
For sufficiently small $\e>0$, we obtain $\Omega' \times [\tau ,T] \subset \cup_{1\leq i\leq N} U(x_i,r_i - \e) \times (\e^2,T]$. For any $U_r(x) \subset \Omega'$ and $t \geq 2\e^2$, if $r < \tilde{r}$, there exists a constant $0<c_{\cc \label{10c1a}}=c_{\ref{10c1a}}(n,p,q,\beta,\Lambda_0, \Lambda_1, \Omega', \Omega, W)$ such that
\begin{align} \label{1.2t}
\frac{\mu_t(B_r(x))}{r^{n-1}}\leq c_{\ref{10c1a}} (1+t^{\frac{1-n}{2}}),
\end{align}
using Theorem \ref{l9}. By $t\geq \tau$, we have
\begin{align*}
	\frac{\mu_t(B_r(x))}{r^{n-1}}\leq c_{\ref{10c1a}} (1+\tau^{\frac{1-n}{2}}).
\end{align*} If $r \geq \tilde{r}$, we have
\begin{align}\label{1.2t2}
\frac{\mu_t(B_r(x))}{r^{n-1}}\leq \frac{\Lambda_1}{\tilde{r}^{n-1}}.
\end{align}
Thus this theorem follows.
\end{proof}
We prove theorems we will need in Section 3.
\begin{lemm}\label{ll12}
	There exist a constant $c_{\cc \label{10ac1}}=c_{\ref{10ac1}}(n,p,q,\beta,\Lambda_0,\Lambda_1,\Omega', \Omega)$ such that, if $\e<\e_{\ref{ME1}}$ and $2\e^2 \leq t\leq 1$, we have
	\begin{align*}
		\int_{\Omega'} \e |u|^2 |\nabla \vhi|^2 \ dx
		\leq c_{\ref{10ac1}}t^{\iota_1} \|u(\cdot,t)\|_{W^{1,p}(\Omega)}^2,
	\end{align*}
	where 
	\begin{align*}
		\iota_1 = \begin{cases}
			\frac{1-n}{p} &(p\geq 2)\\
			1-\frac{n}{p} &(p<2).
		\end{cases}
	\end{align*}
	Note that $\iota_1$ is a negative constant.
\end{lemm}
\begin{proof}
	Fix $2\e^2\leq t$. Let $\Omega' \subset \subset \Omega'' \subset \subset \Omega$. Let $\phi \in C^2_c(\Omega)$ be a function such that $0\leq \phi \leq 1$, $\phi =1$ on $\Omega'$, and $\phi =0$ on $\Omega \backslash \Omega''$. We first consider the case of $p \geq 2$.
	By Theorem  \ref{4MZ}, Lemma \ref{ll9}, \eqref{1.2t}, and \eqref{1.2t2}, there exists a constant
	$c_{\cc \label{11cb2}}=c_{\ref{11cb2}}(n,p,q,\beta,\Lambda_0,\Lambda_1, \Omega'', \Omega,W, \phi)$ such that
	\begin{align} \label{l12ee1}
		\begin{split}
			&	 \int_{\Omega'} \e |u|^2 |\nabla \vhi|^2 \ dx \\
			\leq& \left( \int_{\Omega'}  \e |u|^p |\nabla \vhi|^2 \ dx \right)^{\frac{2}{p}}
			(2 \Lambda_1)^{1-\frac{2}{p}} \\
			\leq &	(2 \Lambda_1)^{1-\frac{2}{p}} \left( \int_{\Omega''} \phi \e |u|^p |\nabla \vhi|^2 \ dx \right)^{\frac{2}{p}}	\\
			\leq &	c_{\ref{11cb2}} (1+t^\frac{1-n}{2})^\frac{2}{p}  \left( \int_{\Omega}  |u|^p +|u|^{p-1} |\nabla u|\ dx \right)^{\frac{2}{p}} \\
			\leq &	2c_{\ref{11cb2}} t^\frac{1-n}{p}  \left( \int_{\Omega}  |u|^p + |\nabla u|^p\ dx \right)^{\frac{2}{p}} \\
			\leq& c_{\ref{10ac1}}t^{\frac{1-n}{p}} \|u(\cdot,t)\|_{W^{1,p}(\Omega)}^2.
		\end{split}
	\end{align}
We next consider the case of $p<2$.
Set $s= \frac{p(n-1)}{n-p}$.
By \eqref{MTe1}, we obtain $2\leq s$.
Using the H\"{o}lder inequality and the Sobolev inequality, we have
	\begin{align}\label{l12ee2}
		\begin{split}
			&	 \int_{\Omega'} \e |u|^2 |\nabla \vhi|^2 \ dx \\
			\leq& \left( \int_{\Omega'}  \e |u|^s |\nabla \vhi|^2 \ dx \right)^{\frac{2}{s}}
			(2 \Lambda_1)^{1-\frac{2}{s}} \\
			\leq &	(2 \Lambda_1)^{1-\frac{2}{s}} \left( \int_{\Omega''} \phi \e |u|^s |\nabla \vhi|^2 \ dx \right)^{\frac{2}{s}}	\\
			\leq &	c_{\ref{11cb2}} (1+t^\frac{1-n}{2} )^\frac{2}{s} \left( \int_{\Omega}  |u|^s +|u|^{s-1} |\nabla u|\ dx \right)^{\frac{2}{s}} \\
			\leq &	c_{\ref{11cb2}} c(n,p,\Omega) t^\frac{1-n}{s} \left( \int_{\Omega} |u|^s \ dx +\left(\int_{\Omega}|u|^\frac{np}{n-p} \ dx  \right)^{\frac{n-p}{np}
				\frac{n(p-1)}{n-p}}   \left(\int_{\Omega}|\nabla u|^p\ dx \right)^{\frac{1}{p}}  \right)^{\frac{2}{s}} \\
			\leq&  c_{\ref{10ac1}}t^{1-\frac{n}{p}} \|u(\cdot,t)\|_{W^{1,p}(\Omega)}^2.
		\end{split}
	\end{align}
Thus this lemma follows.
\end{proof}
Define
\begin{align*}
	\mu_t(\phi(\Omega)) :=\int_{\Omega} \left(\frac{\e |\nabla \vhi|^2}{2}
	+\frac{W(\vhi)}{\e}\right) \phi \ dx
\end{align*}
for $\phi \in C(\Omega)$.
\begin{lemm}\label{ll13}
	There exist a function $\phi \in C^2_c(\Omega)$ with $\mu_t(\phi(\Omega))\leq \Lambda_1$ and  a constant $c_{\cc \label{10c1}}=c_{\ref{10c1}}(n,p,q,\beta,\Lambda_0,\Lambda_1, \Omega', \Omega,W,\phi)$ such that, if $\e<\e_{\ref{ME1}}$ and $2\e^2 \leq t \leq 1$, we have
	\begin{align*}
		 \int_{\Omega'} \frac{\e}{2} \left(\Delta \vhi -\frac{W'(\vhi)}{\e^2}\right)^2 \ dx
		\leq -\frac{d}{dt} \mu_t(\phi(\Omega)) + c_{\ref{10c1}}(1+t^{\iota_1} \|u(\cdot,t)\|_{W^{1,p}(\Omega)}^2)
	\end{align*}
and if $0\leq t\leq 2\e^2 $, we have
	\begin{align*}
	\int_{\Omega'} \frac{\e}{2} \left(\Delta \vhi -\frac{W'(\vhi)}{\e^2}\right)^2 \ dx
	\leq -\frac{d}{dt} \mu_t(\phi(\Omega)) + c_{\ref{10c1}}(1+\e^{-2\beta}).
\end{align*}
\end{lemm}
\begin{proof}
	Let $\Omega' \subset \subset \Omega'' \subset \subset \Omega$. Let $\tilde{\phi} \in C^2_c(\Omega)$ be a function such that $0\leq \tilde{\phi} \leq 1$, $\tilde{\phi} =1$ on $\Omega'$, and $\tilde{\phi} =0$ on $\Omega \backslash \Omega''$.
	By \eqref{MTe2}, we compute
	\begin{align} \label{l12e1}
		\begin{split}
			&\frac{\partial}{\partial t} \int_{\Omega} \left(\frac{\e |\nabla \vhi|^2}{2}
			+\frac{W(\vhi)}{\e}\right) \tilde{\phi}^2 \ dx \\
			=&\int_{\Omega} \left(\e \nabla \vhi \cdot \nabla \partial_t \vhi
			+\frac{W'(\vhi)}{\e} \partial_t \vhi \right) \tilde{\phi}^2 \ dx \\
			=& \int_{\Omega} \partial_t \vhi \left( \left( -\e \Delta \vhi +\frac{W'(\vhi)}{\e} \right)\tilde{\phi}^2- 
			2\e \tilde{\phi} \nabla \vhi \cdot \nabla \tilde{\phi} \right) \ dx\\
			=& \int_{\Omega} \left(\Delta \vhi-\frac{W'(\vhi)}{\e^2}-u\cdot\nabla \vhi \right)  \left( \left( -\e \Delta \vhi +\frac{W'(\vhi)}{\e} \right)\tilde{\phi}^2- 
			2\e \tilde{\phi} \nabla \vhi \cdot \nabla \tilde{\phi} \right) \ dx\\
			\leq&\int_{\Omega} -\frac{\e}{2} \left(\Delta \vhi -\frac{W'(\vhi)}{\e^2}\right)^2 \tilde{\phi}^2
			+2\e (\nabla \vhi\cdot \nabla \tilde{\phi})^2 +\e(u\cdot \nabla \vhi)^2 \tilde{\phi}^2\\
			&+2\e \tilde{\phi}(u\cdot \nabla \vhi)(\nabla \vhi \cdot \nabla \tilde{\phi}) \ dx\\
			\leq&\int_{\Omega} -\frac{\e}{2} \left(\Delta \vhi -\frac{W'(\vhi)}{\e^2}\right)^2 \tilde{\phi}^2
			+3(\tilde{\phi}^2 + |\nabla \tilde{\phi}|^2)(1+|u|^2)\e |\nabla \vhi|^2 \ dx.
		\end{split}
	\end{align}
	If $\e<\e_{\ref{ME1}}$ and $2\e^2 \leq t\leq 1$, we have
	\begin{align*}
		\int_{\Omega'} \e 
		\left(\Delta \vhi -\frac{W'(\vhi)}{\e^2}\right)^2 \ dx
		\leq -\frac{d}{dt} \mu_t(\tilde{\phi}^2(\Omega)) + c_{\ref{10c1}}(1+t^{\iota_1} \|u(\cdot,t)\|_{W^{1,p}(\Omega)}^2),
	\end{align*}
	by \eqref{a5} and Lemma \ref{ll12}. If $0 \leq t  \leq 2\e^2$, we have
	\begin{align*}
		\int_{\Omega'} \e 
		\left(\Delta \vhi -\frac{W'(\vhi)}{\e^2}\right)^2 \ dx
		\leq -\frac{d}{dt} \mu_t(\tilde{\phi}^2(\Omega)) + c_{\ref{10c1}}(1+\e^{-2\beta}),
	\end{align*}
	by \eqref{a3} and \eqref{a5}.
	Thus this lemma follows with $\phi = \tilde{\phi}^2$.
\end{proof}
We redefine $\eta$ of \eqref{eta} as follows. 
\begin{align*}
	\eta(x) \in C^\infty_c(B_{d}) \ \mathrm{with} \ \eta =1 \ \mathrm{on} \ 
	B_{d/2}, \ 0\leq \eta \leq 1,
\end{align*}
where $d=\min \{d(\partial \Omega, \Omega')/2,1/4\}$.
\begin{theo}
	Under the same assumptions of Theorem \ref{MT1}, if $\e<\e_{\ref{ME1}}$ and for
	$t_0<t_1<s$, $t_0,t_1 \in [\tau,T]$, and $y \in \Omega'$, there exists a constant
	$c_{\cc \label{13c1}}=c_{\ref{13c1}}(n,p,q,\beta,\Lambda_0,\Lambda_1,\tau, \Omega', \Omega)$
	such that
	\begin{align}
		\begin{split}
			&	\left.\int_\Omega \tilde{\rho} \ d\mu^\e_t \right|^{t_1}_{t=t_0}
			+\int^{t_1}_{t_0} \frac{dt}{2(s-t)} \int_{\Omega} \left|\frac{\e|\nabla \vhi|^2}{2}
			-\frac{W(\vhi)}{\e} \right|\tilde{\rho} \ dxdt\\
			\leq&c_{\ref{13c1}}((t_1-t_0)^{\hat{p}}+\e^{\tilde{\beta}-\beta}|n\log{\e}|^{\frac{n+2}{2}}
			+e^{-\frac{1}{c_{\ref{13c1}}(s-t_0)}}(t_1-t_0)).
		\end{split}
	\end{align}
\end{theo}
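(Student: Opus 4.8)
The plan is to run the Huisken-type monotonicity argument of Section~2 on the time interval $[t_0,t_1]$, now exploiting the uniform density bound furnished by Theorem~\ref{MT1}. Write $D:=\frac{\e|\nabla\vhi|^2}{2}-\frac{W(\vhi)}{\e}$ for the discrepancy and $\tilde\rho=\tilde\rho_{(y,s)}$ with $\eta$ the cutoff just redefined (supported in $B_d$). Since $\overline{B_d(y)}\subset\Omega$ for every $y\in\Omega'$, the derivation of Theorem~\ref{l4} is purely local and carries over verbatim (with constants depending additionally on $d$, hence on $\Omega',\Omega$), giving, for $t_0\le t\le t_1<s$,
\[
\frac{d}{dt}\int_{\Omega}\tilde\rho\,d\mu^\e_t \le \frac12\int_{\Omega}\tilde\rho\,|u|^2\,d\mu^\e_t + \frac{1}{2(s-t)}\int_{\Omega}D\,\tilde\rho\,dx + C\,e^{-\frac{1}{C(s-t)}}\,\mu^\e_t(B_d(y)),
\]
with $C=C(n,\Omega',\Omega)$. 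The first move is to add $\frac{1}{2(s-t)}\int_\Omega|D|\tilde\rho\,dx$ to both sides and use the pointwise identity $D\tilde\rho+|D|\tilde\rho=2D_+\tilde\rho$ (valid since $\tilde\rho\ge 0$): the left-hand side becomes $\frac{d}{dt}\int\tilde\rho\,d\mu^\e_t+\frac{1}{2(s-t)}\int|D|\tilde\rho\,dx$, while the discrepancy term on the right turns into $\frac{1}{s-t}\int D_+\tilde\rho\,dx$. Then I would integrate over $[t_0,t_1]$; the left-hand side is precisely the quantity to be estimated, and it remains to bound the time-integrals of the three terms on the right.

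To bound those, I would first fix $\e$ small enough that $\tau>\e^2+4\e^{2\tilde\beta}$ and $\tau-2\e^{2\tilde\beta}\ge\tau/2$, and apply Theorem~\ref{MT1} with $\tau/2$ in place of $\tau$; this produces a constant $D_1=D_1(n,p,q,\beta,\Lambda_0,M_0,\tau,\Omega',\Omega,W)$ with $\mu^\e_\lambda(B_r(x))\le D_1 r^{n-1}$ whenever $U_r(x)\subset\Omega'$ and $\lambda\in[\tau/2,T]$, so that every local density ratio occurring below (for times in $[\tau-2\e^{2\tilde\beta},T]$) is $\le D_1$. For the transport term I would invoke Lemma~\ref{l8}, whose proof is unaffected by replacing the centre $0$ with $y$ and by the new $\eta$ since $\overline{B_d(y)}\subset\Omega$, together with \eqref{a4} and $\sup_{[t_0,t_1]}E\le D_1$, getting a bound $\le c\,(t_1-t_0)^{\hat{p}}$. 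For the positive-part term I would repeat the estimates behind Lemma~\ref{l7} but with the time-integration started at $t_0$ instead of at $2\e^{2\tilde\beta}$; because $t_0\ge\tau>\e^2+3\e^{2\tilde\beta}$, these estimates only invoke Lemma~\ref{l6} and Lemma~\ref{l2}, and all density ratios that appear now lie in $[\tau-2\e^{2\tilde\beta},T]$, hence are $\le D_1$, yielding $\le c\,\e^{\tilde\beta-\beta}|n\log\e|^{\frac{n+2}{2}}$. For the error term I would use $\mu^\e_t(B_d(y))\le\mu^\e_t(\Omega)\le M_0$ from \eqref{a5} and monotonicity of $x\mapsto e^{-1/x}$: since $0<s-t\le s-t_0$ on $[t_0,t_1]$, $\int_{t_0}^{t_1}C e^{-\frac{1}{C(s-t)}}\mu^\e_t(B_d(y))\,dt\le C M_0\,e^{-\frac{1}{C(s-t_0)}}(t_1-t_0)$. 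Summing the three bounds and choosing $c_{\ref{13c1}}$ to be the largest of the constants produced together with $C$ gives the assertion.

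The main obstacle is bookkeeping rather than anything conceptually new. One point is that the lemmas of Section~2 were proved in the rescaled local model ($\Omega=U_1$, balls centred at the origin) with the earlier cutoff, so one must check that translating the centre to $y\in\Omega'$ and passing to the redefined $\eta$ leaves their proofs intact, which it does thanks to $\overline{B_d(y)}\subset\Omega$. The more delicate point is that Lemmas~\ref{l6}--\ref{l7} look back by $2\e^{2\tilde\beta}$ in time, so the density bound is required slightly before $\tau$; this forces both restarting the time-integration at $t_0$ (rather than quoting Lemma~\ref{l7} verbatim, which would bring in density ratios on $[2\e^{2\tilde\beta},\tau]$ that are not controlled uniformly in $\e$) and invoking Theorem~\ref{MT1} with $\tau/2$. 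Tracking the various smallness thresholds on $\e$ ($\e<\e_{\ref{ME1}}$, $\e<\e_{\ref{6E1}}$, $\tau>\e^2+4\e^{2\tilde\beta}$, etc.) is routine.
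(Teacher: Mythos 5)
Your proposal is correct and follows essentially the same route as the paper: integrate the monotonicity formula of Theorem~\ref{l4} (adapted to the new cutoff $\eta$ and a center $y\in\Omega'$), add the $|D|$ term to both sides to turn $D+|D|$ into $2D_+$, and then bound the three right-hand-side contributions by Lemma~\ref{l8} (transport), Lemma~\ref{l7} (positive part of the discrepancy), and \eqref{a5} (tail error), using the uniform density bound from Theorem~\ref{MT1}. The paper's proof is a two-line remark asserting exactly this chain of lemmas, and your extra care about the $2\e^{2\tilde\beta}$ look-back (invoking Theorem~\ref{MT1} with $\tau/2$ rather than relying on density bounds near $t\approx\e^2$) is a legitimate and welcome tightening of a point the paper glosses over.
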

\begin{proof}
	Theorem \ref{l4}, Lemma \ref{l7}, and Lemma \ref{l8} hold if we change $\eta$
	but change the constants. Using there theorems and Theorem \ref{MT1}, this theorem follows.
\end{proof}
\section{Proof of Theorem \ref{MT2}}
We first construct relevant solutions for \eqref{MTe2} when given $\Omega_0 \subset \subset \Omega$
with $ \chi_{\Omega_0} \in BV(\Omega)$ and 
$g \in L^q([0,\infty);W^{2,p}(\Omega)) \cap W^{1,\infty}([0,\infty);L^\infty(\Omega))$.
Using the standard Extension Theorem, we can know that there exists a function 
$\tilde{g} \in L^q([0,\infty);W^{2,p}(\R^n)) \cap W^{1,\infty}([0,\infty);L^\infty(\R^n))$
such that $\tilde{g}=g$ in $[0,\infty) \times \Omega$.
and
\begin{align} \label{p13e1}
	\begin{split}
		&\|\tilde{g}\|_{L^q([0,\infty);W^{2,p}(\R^n)) }
	+\|\tilde{g}\|_{W^{1,\infty}([0,\infty);L^\infty(\R^n))}\\ \leq& c(n,p,q,\Omega)
	(\|g\|_{L^q([0,\infty);W^{2,p}(\Omega)) }
	+\|g\|_{W^{1,\infty}([0,\infty);L^\infty(\Omega))}).
	\end{split}
\end{align}
Let $\Omega',\Omega'' \subset \R^n$ be bounded domains with smooth boundary and $\Omega_0 \subset \subset \Omega' \subset \subset \Omega \subset \subset \Omega''$.
Using \cite[Theorem 3.42]{LA00}, there exists a sequence $\{\Omega_0^i\}_{i \in \Na} \subset \subset \Omega'$ of open sets with smooth boundaries such that $\chi_{\Omega_0^i} \to \chi_{\Omega_0}$ in $L^1(\Omega'')$ and $\|D \chi_{\Omega_0^i}\| (\Omega'') \to \|D \chi_{\Omega_0}\| (\Omega'') $.
Let $d_i$ be the signed distance function to $\partial \Omega_0^i$ which is
positive inside of $\Omega_0^i$. Let $r_i$ be a constant such that $\{x \in \Omega: \dist (x,\Omega_0^i<
r_i)\}\subset \subset \Omega$ and $d_i \in C^{1}(\{x \in \Omega :
\dist (x, \partial \Omega^i_0) < r_i \})$.
Let $h_i \in C^\infty(\R)$ be a monotone increasing function such that
$h_i(s)=r_i/2$ for $s>2r_i/3$, $h_i(s)=s$ for $0\leq s\leq r_i/2$, $h_i(s)=-h_i(-s)$
for $s<0$, and $h_i'(s)\leq 1$ for $s>0$. Define $\tilde{d_i}(x):=h_i(d_i(x))$ for
$x \in \Omega''$. We choose a sequence of $\e_i>0$ such that
\begin{align}
	\lim_{i\to \infty} \frac{\sqrt{\e_i}}{r_i}=0.
\end{align}
We next let $l: \Omega'' \to [0,1]$ be a smooth function such that $l=1$ on $\Omega'$ and $l=0$ on $ \Omega'' \backslash \Omega$.
Let $\Psi$ be a solution to the following problem
\begin{align*}
	\begin{cases}
		\Psi'' (s) = W'(\Psi(x)), \\
		\Psi(0)=0.
	\end{cases}
\end{align*}
Define
\begin{align*}
(\vhi_{\e_i})_0(x) := l(x) \Psi\left(\frac{\tilde{d}_i(x)}{\e_i} \right)+l(x)-1.
\end{align*}
Since $\tilde{g} \in L^q([0,\infty);W^{2,p}(\R^n)) \cap W^{1,\infty}([0,\infty);L^\infty(\R^n))$,
there exist functions $\{g_{\e_i}\}_{i\in \Na} \subset C^\infty(\Omega''\times [0,\infty))$
such that $\|g_{\e_i}-\tilde{g}\|_{ L^q([0,\infty);W^{2,p}(\Omega'')) }+ 
\|g_{\e_i}-\tilde{g}\|_{ W^{1,\infty}([0,\infty);L^\infty(\Omega''))} \to 0$ and
\begin{align} \label{nablag}
\sup_{\Omega'' \times [0,T_i]} \{|\nabla g_{\e_i}|, \e_i |\nabla^2 g_{\e_i}| \} \leq \e_i^{-\beta},
\end{align}
where $T_i:=i$.
Then we consider the following differential equation,
\begin{align}\label{l31}
\begin{cases}
\partial_t \vhi + \nabla g_{\e_i} \cdot \nabla \vhi = \Delta \vhi -\frac{W'(\vhi)}{\e^2} 
&\mathrm{on} \ \Omega''\times [0,T_i],\\
\vhi=(\vhi_{\e_i})_0 &\mathrm{on} \ \Omega''\times \{0\},\\
\vhi=-1 & \mathrm{on} \ \partial\Omega''\times [0,T_i].
\end{cases}
\end{align}
By the standard parabolic theory, there exists a classical solution, which is represented
 by $\vhi_{\e_i}$. We can check that the function $\nabla g_{\e_i}$ satisfy \eqref{a3} and \eqref{a4}.
 Using the maximum principle, the function $\vhi_{\e_i}$ satisfy \eqref{a2}
 on $\Omega''\times [0,T_i]$.
  By the definition of $(\vhi_{\e_{i}})_0$, we have $\mu^{\e_i}_0(\Omega'') < M_1$, where $M_1$ depends only on $\Omega_0$ and $W$. We show that \eqref{a5} is satisfied.
 \begin{lemm} \label{l3a}
 	Given $0<T<T_i$, there exist constants $c_{\cc \label{c31}}=c_{\ref{c31}}(n,p,q,g,T,\Omega,W)$ and $\e_{\ce \label{e31}}= \e_{\ref{e31}}(g)$ such that, if $\e< \e_{\ref{e31}}$,
 	\begin{align}
 	\sup_{t\in [0,T]}\mu^{\e_i}_t(\Omega'') \leq c_{\ref{c31}}.
 	\end{align}
 \end{lemm}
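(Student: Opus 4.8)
The plan is to bound the time-derivative of the total energy $\mu^{\e_i}_t(\Omega)$ and integrate, exploiting that the contribution of the transport term $\nabla g_{\e_i}\cdot\nabla\vhi$ is controlled by the energy itself via the density-ratio estimate of Theorem~\ref{MT1}. First I would take $\phi\equiv 1$ in the computation in the proof of the preceding lemma (equation \eqref{l12e1}), i.e.\ differentiate $\int_\Omega\bigl(\frac{\e|\nabla\vhi|^2}{2}+\frac{W(\vhi)}{\e}\bigr)\,dx$ in $t$, substitute the equation $\partial_t\vhi=\Delta\vhi-\frac{W'(\vhi)}{\e^2}-\nabla g_{\e_i}\cdot\nabla\vhi$, and integrate by parts; the boundary terms vanish because $\vhi=-1$ is constant on $\partial\Omega\times[0,T_i]$ (so $\nabla\vhi$ is normal there but the relevant combination drops, exactly as in \cite{TA16}). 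This yields, after Young's inequality,
\begin{align*}
\frac{d}{dt}\mu^{\e_i}_t(\Omega)\leq -\frac{\e}{2}\int_\Omega\Bigl(\Delta\vhi-\frac{W'(\vhi)}{\e^2}\Bigr)^2\,dx+\frac12\int_\Omega |\nabla g_{\e_i}|^2\,\e|\nabla\vhi|^2\,dx,
\end{align*}
so the first term is a good (negative) term and only the transport term $\int_\Omega|\nabla g_{\e_i}|^2\,\e|\nabla\vhi|^2\,dx$ needs a bound.

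The second step is to control $\int_\Omega|\nabla g_{\e_i}|^2\,d\mu^{\e_i}_t$. Here I would invoke Theorem~\ref{MT1} (applicable since the pair $(\vhi_{\e_i},\nabla g_{\e_i})$ satisfies \eqref{a2}--\eqref{a4} and $\mu^{\e_i}_0(\Omega)\leq M_1$) to get a uniform density-ratio bound on a slightly smaller domain, then Theorem~\ref{4MZ} together with Lemma~\ref{ll9} to bound $\int|\nabla g_{\e_i}|^2\,d\mu^{\e_i}_t\leq c\int_\Omega|\nabla(|\nabla g_{\e_i}|^2)|\,dx\lesssim\|g_{\e_i}\|_{W^{2,p}}\cdot\|\nabla g_{\e_i}\|_{?}$, which by Hölder and Sobolev embedding (using the exponent condition \eqref{MTe1}) is integrable in time in $L^{q/2}$ — exactly the mechanism used in \eqref{l12e2}--\eqref{l12e3}. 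Actually, since Theorem~\ref{MT1} only gives the density bound away from $\tau>0$ and away from $\partial\Omega$, I would use the cutoff $\phi$ as in the previous lemma to localize near $\partial\Omega$ (where $\mu^{\e_i}_t$ is controlled by the one-sided barrier $\vhi\equiv-1$ and standard parabolic estimates) and handle small times $t\in(0,\tau)$ separately, where the initial energy $M_1$ and a Gronwall argument with the crude bound $\sup|\nabla g_{\e_i}|^2\leq\e^{-2\beta}$ — wait, that blows up; so instead for small times I would observe that the initial datum is well-prepared (a profile $\Psi$ rescaled around a smooth surface) so the energy is already close to $\mathcal H^{n-1}(M_0)$ with density ratios bounded, as in \cite{TA16}, giving a time-uniform start.

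The third step assembles these: integrating the differential inequality over $[0,T]$ gives
\begin{align*}
\mu^{\e_i}_T(\Omega)\leq\mu^{\e_i}_0(\Omega)+\int_0^T\Bigl(\int_\Omega|\nabla g_{\e_i}|^2\,d\mu^{\e_i}_t\Bigr)\,dt\leq M_1+c\int_0^T\|g_{\e_i}(\cdot,t)\|_{W^{2,p}}^{2}\bigl(1+\mu^{\e_i}_t(\Omega)\bigr)\,dt,
\end{align*}
and since $t\mapsto\|g_{\e_i}(\cdot,t)\|_{W^{2,p}}^2\in L^{q/2}([0,\infty))$ with $q/2>1$, Gronwall's inequality (in the integral form with an $L^1_{loc}$ kernel) yields $\sup_{t\in[0,T]}\mu^{\e_i}_t(\Omega)\leq c_{\ref{c31}}(g,T,\Omega_0,W)$, uniformly in $i$ once $\e_i<\e_{\ref{e31}}(g)$ is small enough that Theorem~\ref{MT1} applies with the fixed data $\Lambda_0=\|g\|_{L^q(W^{2,p})}+1$, $M_0=M_1$. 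The main obstacle I anticipate is the circularity: Theorem~\ref{MT1} requires the hypothesis \eqref{a5} ($\sup_t\mu^{\e}_t(\Omega)\leq M_0$) that we are trying to prove here. The way around it is a continuity/bootstrap argument — define $T^*$ as the supremal time up to which $\sup_{[0,T^*]}\mu^{\e_i}_t(\Omega)\leq 2c_{\ref{c31}}$ holds, apply Theorem~\ref{MT1} on $[0,T^*]$ with $M_0=2c_{\ref{c31}}$, run the above estimate to improve the bound to $c_{\ref{c31}}$ strictly below $2c_{\ref{c31}}$ on $[0,T^*]$, and conclude $T^*\geq T$; making the constants in this loop genuinely independent of $T^*$ (so the improvement is strict) is the delicate bookkeeping, together with the small-time analysis near $t=0$ where density ratios of the approximating initial data must be shown bounded.
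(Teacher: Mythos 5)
Your first step (differentiate the unweighted energy, integrate by parts, substitute the PDE, Young) is correct, but from there the proposal drifts into genuine difficulties that the paper's own proof sidesteps entirely. The paper exploits that the transport field is a \emph{gradient} $\nabla g_{\e_i}$ and differentiates the \emph{weighted} energy $\int_\Omega\bigl(\frac{\e_i|\nabla\vhi|^2}{2}+\frac{W(\vhi)}{\e_i}\bigr)e^{-g_{\e_i}}\,dx$. When you integrate by parts the term $\e_i\nabla\vhi\cdot\nabla\partial_t\vhi\,e^{-g_{\e_i}}$, the derivative hitting the weight produces exactly $+\e_i(\nabla\vhi\cdot\nabla g_{\e_i})\partial_t\vhi\,e^{-g_{\e_i}}$, so the combination $-\e_i\Delta\vhi+\e_i\nabla\vhi\cdot\nabla g_{\e_i}+\frac{W'(\vhi)}{\e_i}$ is, by the PDE, precisely $-\e_i\partial_t\vhi$. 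This yields the sign-definite dissipation $-\int\e_i(\partial_t\vhi)^2 e^{-g_{\e_i}}$, and the only remaining term comes from $\partial_t(e^{-g_{\e_i}})$, bounded by $\|g_{\e_i}\|_{W^{1,\infty}([0,\infty);L^\infty)}$ times the weighted energy itself. Gr\"{o}nwall then closes immediately, from $t=0$, with no density-ratio input, no cutoffs, no bootstrap, and using only the $W^{1,\infty}([0,\infty);L^\infty(\Omega))$ part of the assumption on $g$. The missing idea in your write-up is precisely this weight trick, which is available here because $u=\nabla g_{\e_i}$; it is not available for the general $u$ of Theorem~\ref{MT1}, which is why that theorem carries $\sup_t\mu_t(\Omega)\le M_0$ as a hypothesis.

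Beyond that, several of the patches you propose would not actually close. The circularity you identify is real, and the bootstrap is not obviously self-improving because the constant $D_1$ of Theorem~\ref{MT1} grows with $M_0$, so Gr\"{o}nwall returns a bound of the form $M_1e^{cD_1(M_0)T}$ which need not stay below the postulated $M_0$ for all $T$. More fundamentally, your fallback for small times, that ``the energy is already close to $\mathcal H^{n-1}(M_0)$ with density ratios bounded, as in \cite{TA16},'' contradicts the whole point of this paper: the initial surface $\partial\Omega_0$ is only assumed to bound a set of finite perimeter (not $C^1$), the approximating sets $\Omega_0^i$ are mollifications whose density ratios are \emph{not} uniformly bounded in $i$, and the abstract states explicitly that the initial upper density ratio bound is not assumed. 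Finally, the near-boundary argument (``one-sided barrier $\vhi\equiv-1$ and standard parabolic estimates'') is asserted rather than carried out, and is unnecessary once one uses the weight, since $\partial_t\vhi_{\e_i}=0$ on $\partial\Omega\times[0,T_i]$ kills all boundary terms in the integration by parts.
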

\begin{proof}
	 Since $\partial_t \vhi_{\e_i}=0$ on $\partial\Omega''\times [0,T]$, we compute
	\begin{align*}
	&\frac{\partial}{\partial t} \int_{\Omega''} \left(\frac{\e_i |\nabla \vhi_{\e_i}|^2}{2}
	+\frac{W(\vhi_{\e_i})}{\e_i} \right) \exp(-g_{\e_i}) \ dx \\
	=&\int_{\Omega''} \left( \e_i \nabla \vhi_{\e_i} \cdot \nabla \partial_t \vhi_{\e_i} +
	\frac{W'(\vhi_{\e_i})}{\e_i} \partial_t \vhi_{\e_i} \right) \exp(-g_{\e_i}) \ dx\\
	&-\int_{\Omega''} \left(\frac{\e_i |\nabla \vhi_{\e_i}|^2}{2}
	+\frac{W(\vhi_{\e_i})}{\e_i} \right) \exp(-g_{\e_i}) \partial_t g_{\e_i} \ dx\\
	=&\int_{\Omega''} \left(-\e_i \Delta \vhi_{\e_i} + \e_i \nabla \vhi_{\e_i}
	\cdot \nabla g_{\e_i} +\frac{W'(\vhi_{\e_i})}{\e_i}  \right) \partial_t \vhi_{\e_i} \exp(-g_{\e_i}) \ dx\\
	&-\int_{\Omega''} \left(\frac{\e_i |\nabla \vhi_{\e_i}|^2}{2}
	+\frac{W(\vhi_{\e_i})}{\e_i} \right) \exp(-g_{\e_i}) \partial_t g_{\e_i} \ dx\\
	=& \int_{\Omega''} -\e_i (\partial_t \vhi_{\e_i})^2 \exp(-g_{\e_i})-
	\left(\frac{\e_i |\nabla \vhi_{\e_i}|^2}{2}
	+\frac{W(\vhi_{\e_i})}{\e_i} \right) \exp(-g_{\e_i}) \partial_t g_{\e_i}\ dx\\
	\leq& \|g_{\e_i}\|_{W^{1,\infty}([0,\infty);L^\infty(\Omega''))}
	\int_{\Omega''} \left(\frac{\e_i |\nabla \vhi_{\e_i}|^2}{2}
	+\frac{W(\vhi_{\e_i})}{\e_i} \right) \exp(-g_{\e_i})\ dx.
	\end{align*}
	For sufficiently small $\e$, we have $\|g_{\e_i}\|_{W^{1,\infty}(\Omega'' \times [0,T])} \leq
	\|\tilde{g}\|_{W^{1,\infty}([0,\infty);L^\infty(\Omega''))} +1$. Using the Gr\"{o}nwall's inequality, we obtain
	\begin{align*}
	& \exp(-\|\tilde{g}\|_{W^{1,\infty}([0,\infty);L^\infty(\Omega''))} -1) \mu^{\e_i}_t(\Omega'') \\
	\leq&\int_{\Omega''} \left(\frac{\e_i |\nabla \vhi_{\e_i}(x,t)|^2}{2}
	+\frac{W(\vhi_{\e_i}(x,t))}{\e_i} \right) \exp(-g_{\e_i}(x,t)) \ dx\\
	\leq& \exp((\|\tilde{g}\|_{W^{1,\infty}([0,\infty);L^\infty(\Omega''))} +1)t)\\
	& \times \int_{\Omega''} \left(\frac{\e_i |\nabla \vhi_{\e_i}(x,0)|^2}{2}
	+\frac{W(\vhi_{\e_i}(x,0))}{\e_i} \right) \exp(-g_{\e_i}(x,0)) \ dx \\
	\leq& \exp((\|\tilde{g}\|_{W^{1,\infty}([0,\infty);L^\infty(\Omega''))} +1)(T+1))M_1.
	\end{align*}
	Thus this lemma follows.
\end{proof}
 Define
\begin{align*}
	w_i := \Phi \circ \vhi_{\e_{i}} \ \mathrm{with} \ \Phi(s):= \sigma^{-1} \int^s_{-1}\sqrt{2W(y)} \ dy,
\end{align*}
where $\sigma =\int^1_{-1}\sqrt{2W(y)} \ dy$ and $\vhi_{\e_i}$ is the solution of \eqref{l31}.
Then there exists a subsequence $\{w_{i_k}\}_{k\in \Na} \subset\{w_i\}_{i\in \Na}$
which converges $L^1_{loc}(\Omega \times (0,\infty))$ and pointwise on $\Omega \times(0,\infty)$, and the limit function is $\vhi$. We show that the function $\vhi$ satisfies Theorem \ref{MT2} (2).
\begin{proof}[Proof of Theorem \ref{MT2} (2-d)] 
For simplicity, we will use $\{w_{k}\}_{k\in \Na}$, $\{\vhi_{k}\}_{k\in \Na}$, $g_k$, and $\e_{k}$ instead of $\{w_{i_k}\}_{k\in \Na}$, $\{\vhi_{\e_{i_k}}\}_{k\in \Na}$, $g_{\e_{i_k}}$, and $\e_{i_k}$.
Since $(1+\vhi_{k}(\cdot,0))/2 \to \chi_{\Omega_0}$, we have $w_k \to \chi_{\Omega_0}$ in $L^1$.
Define $\Omega_t = \{x\in \Omega; \vhi(x,t)=1 \}$. We compute
\begin{align} \label{3ee1}
	\begin{split}
			|\Omega_0 \Delta \Omega_t| &=\int_\Omega |\vhi(x,0)-\vhi(x,t)| \ dx\\
			&= \lim_{k \to \infty} \int_\Omega |w_k(x,0)-w_k(x,t)| \ dx\\
			&\leq \liminf_{k \to \infty} \int_\Omega \int^t_0 | \partial_t w_k(x,s)| \ dsdx\\
			&\leq \sigma^{-1} \liminf_{k \to \infty} \int_\Omega \int^t_0 |\sqrt{2W(\vhi_k)} \partial_t \vhi_k | \ dsdx\\
				&\leq \sigma^{-1} \liminf_{k \to \infty}  \int^t_0\int_\Omega \frac{\e_k |\partial_t \vhi_k|^2}{2}s^{\iota_2}+ \frac{W(\vhi_k)}{\e_k}s^{-\iota_2}  \ dxds,
	\end{split}
\end{align}
where $\iota_2=1/q-\iota_1/2$.
By definition of $p$, $q$, and $\iota_1$, we have $0< \iota_2 <1$. By Lemma \ref{l3a}, we have
\begin{align}
	\begin{split}
	 &\int^t_0\int_\Omega  \frac{W(\vhi_k)}{\e_k}s^{-\iota_2}  \ dxds\\
	\leq& c_{\ref{c31}} \int^t_0  s^{-\iota_2}  \ ds\\
	\leq& \frac{c_{\ref{c31}}}{1-\iota_2} t^{-\iota_2+1}.
	\end{split}
\end{align}
By Lemma \ref{ll13} and Lemma \ref{l3a}, there exists a function $\phi \in C^2_c(\Omega'')$ with $\mu_t^{\e_k}(\phi(\Omega''))\leq c_{\ref{c31}}$ such that, if $\e_k<\e_{\ref{ME1}}$ and $2\e_k^2 \leq t \leq 1$, we have
\begin{align} \label{e3.8}
	\int_{\Omega} \frac{\e_k}{2} \left(\Delta \vhi_k -\frac{W'(\vhi_k)}{\e_k^2}\right)^2 \ dx
	\leq -\frac{d}{dt} \mu_t^{\e_k}(\phi(\Omega'')) + c_{\ref{10c1}}(1+t^{\iota_1} \|\nabla g_k(\cdot,t)\|_{W^{1,p}(\Omega)}^2)
\end{align}
and if $0\leq t\leq 2\e^2 $, we have
\begin{align} \label{e3.9}
	\int_{\Omega} \frac{\e_k}{2} \left(\Delta \vhi_k -\frac{W'(\vhi_k)}{\e_k^2}\right)^2 \ dx
	\leq -\frac{d}{dt} \mu_t^{\e_k}(\phi(\Omega'')) + c_{\ref{10c1}}(1+\e_k^{-2\beta}).
\end{align}
By \eqref{nablag}, \eqref{l31}, \eqref{e3.8}, \eqref{e3.9}, Lemma \ref{ll12}, Lemma \ref{l3a}, and integration by part, we have
\begin{align}\label{3ee2}
	\begin{split}
		&\int^t_0\int_\Omega \frac{\e_k |\partial_t \vhi_k|^2}{2}s^{\iota_2} \ dxds\\
	\leq& \int^{2\e^2_k}_0 s^{\iota_2} \int_\Omega \frac{\e_k|\nabla g_k|^2 |\nabla \vhi_k|^2 }{2}+ \frac{\e_k }{2}
	\left(\Delta \vhi_k -\frac{W'(\vhi_k)}{\e^2_k}\right)^2  \ dxds\\
	&+ \int^t_{2\e^2_k}s^{\iota_2} \int_\Omega \frac{\e_k|\nabla g_k|^2 |\nabla \vhi_k|^2 }{2}+ \frac{\e_k }{2}
	\left(\Delta \vhi_k -\frac{W'(\vhi_k)}{\e^2_k}\right)^2  \ dxds\\
	\leq& \int^{2\e^2_k}_0 s^{\iota_2} \left(c_{\ref{c31}} \e^{-2\beta}- \frac{d}{ds} \mu_s^{\e_k}(\phi(\Omega'')) + c_{\ref{10c1}}(1+\e^{-2\beta})\right) \ ds\\
	&+ \int^t_{2\e^2_k}s^{\iota_2} \left(c_{\ref{10ac1}}s^{\iota_1} \|\nabla g_k(\cdot,s)\|_{W^{1,p}(\Omega)}^2-\frac{d}{ds} \mu_s^{\e_k}(\phi(\Omega'')) + c_{\ref{10c1}}(1+s^{\iota_1} \|\nabla g_k(\cdot,s)\|_{W^{1,p}(\Omega)}^2) \right)  \ ds\\
	\leq & c_{\ref{c13d}}(\e_k^{2\iota_2}+t^{\iota_1+\iota_2+1-\frac{2}{q}}+t^{\iota_2}),
	\end{split}
\end{align}
where $c_{\cc \label{c13d}}=c_{\ref{c13d}}(n,p,q,\beta,g,\Omega_0,\Omega,W)$.
 By \eqref{3ee1}--\eqref{3ee2}, we obtain
 \begin{align*}
 		&|\Omega_0 \Delta \Omega_t|\\ \leq  &\sigma^{-1} \liminf_{k \to \infty} \left(\frac{c_{\ref{c31}}}{1-\iota_2} t^{-\iota_2+1}
 		+c_{\ref{c13d}}(\e_k^{2\iota_2}+t^{\iota_1+\iota_2+1-\frac{2}{q}}+t^{\iota_2}) \right) \\
 		\leq& \frac{\sigma^{-1} c_{\ref{c31}}}{\iota_2+1} t^{-\iota_2+1}+
 		\sigma^{-1} c_{\ref{c13d}} t^{\iota_1+\iota_2+1-\frac{2}{q}}  +
 		\sigma^{-1}c_{\ref{c13d}} t^{\iota_2}.
 \end{align*}
Thus this theorem follows by $-\iota_2+1=\iota_1+\iota_2+1-\frac{2}{q}>0$ and $\iota_2>0$. 
\end{proof}
\begin{proof}[Proof of Theorem \ref{MT2}] Given $0<\tau<T$.
	Using Theorem \ref{MT1} and Lemma \ref{l3a}, there exist constants
	$\e_{\ce \label{e3}}= \e_{\ref{e3}}(n,p,q,g,\tau,T, \Omega, \Omega'',W)>0$ and
	$D_2= D_2(n,p,q,g,\tau, T,\Omega, \Omega'',W)>0$ such that, if
	$\e<\e_{\ref{e3}}$, we have
	\begin{align}\label{3e3}
	\sup_{U_{r}(x)\subset \Omega,t \in [\tau,T]}  \frac{\mu_{t} (B_r(x))}{r^{n-1}} \leq D_2.
	\end{align}
	By \eqref{3e3} and Section 2, we can check that all the theorems
	except Proposition 8.5 and Proposition 8.6 related to a unit density of $V_t$ from 
	Section 5 onwards in \cite{TA16} hold even if we change $\Omega = \R^n$ or $\T^n$ to a bounded domain $\Omega'$ and change the initial time to $\tau$.
	This is because many theorems have been proved on bounded domains, and other theorems, namely Proposition 6.1, Proposition 7.3, Theorem 7.1, Proposition 8.3, and Proposition 8.4 in \cite{TA16}, can be proved exactly the same even if the domain is bounded. Thus this theorem follows.
\end{proof}

\end{document}